\documentclass[final]{siamltex}

\usepackage{amssymb}
\usepackage{amsmath}
\usepackage{psfrag}
\usepackage{multirow}
\usepackage[hyphens]{url}
\usepackage{hyperref}
\usepackage[lined, ruled, linesnumbered]{algorithm2e}
\usepackage{epstopdf}
\usepackage[normalem]{ulem}

\usepackage[T1]{fontenc}
\usepackage[ansinew]{inputenc}
\usepackage{lmodern}

\usepackage[letterpaper,centering]{geometry}
\geometry{textheight=8.5in,textwidth=6in}

\usepackage[textsize=tiny]{todonotes}

\newcommand{\argmin}{\operatornamewithlimits{arg\ min}}

\bibliographystyle{siam}

\title{NOWPAC: A provably convergent derivative-free nonlinear optimizer with path-augmented constraints}


\author{F.\ Augustin\footnotemark[1] \and Y.~M.\ Marzouk\footnotemark[1]}


\begin{document}

\maketitle

\renewcommand{\thefootnote}{\fnsymbol{footnote}}

\footnotetext[1]{Massachusetts Institute of Technology, Department of Aeronautics and Astronautics, Cambridge, MA
02139, USA, \texttt{\{fmaugust,ymarz\}@mit.edu}.}

\renewcommand{\thefootnote}{\arabic{footnote}}


\begin{abstract}
This paper proposes the algorithm NOWPAC (Nonlinear Optimization With Path-Augmented Constraints) for nonlinear constrained derivative-free optimization. The algorithm uses a trust region framework based on fully linear models for the objective function and the constraints. A new constraint-handling scheme based on an inner boundary path allows for the computation of feasible trial steps using models for the constraints. We prove that the iterates computed by NOWPAC converge to a first-order critical point. We also discuss the convergence of NOWPAC in situations where evaluations of the objective function or the constraints are inexact, e.g., corrupted by numerical errors. We determine a rate of decay that the magnitude of these numerical errors must satisfy, while approaching the critical point, to guarantee convergence. In settings where adjusting the accuracy of the objective or constraint evaluations is not possible, as is often the case in practical applications, we introduce an error indicator to detect these regimes and prevent deterioration of the optimization results.
\end{abstract}

\section{Introduction}\label{sec:introduction}
In the design of industrial processes and other engineered systems, one often has to choose parameters $x \in \mathbb{R}^n$ in order to maximize performance while meeting prescribed requirements. The requirements and the performance objective may be available only as the result of black-box model evaluations, and the requirements may not be expressible in analytical form. To solve these problems, this paper introduces a new derivative-free approach for nonlinear constrained optimization. We generalize existing trust region methodologies by proposing a new scheme for handling general nonlinear constraints, and we prove convergence of the resulting algorithm to a first-order critical point. We also develop additional theory and an error indicator to account for inexact evaluations of the objective and constraints.

More precisely, we are interested in solving optimization programs of the form: 
\begin{equation}\label{eq:general_opt_problem}
\begin{split}
&\;\;\;\;\; \; \; \min f(x)\\
&\mbox{s.t.} \quad c_i(x) \leq 0,\ i=1 \ldots r
\end{split}
\end{equation}
where $f: \mathbb{R}^n \rightarrow \mathbb{R}$ is the objective function defining the quantity of interest and $c_i: \mathbb{R}^n \rightarrow \mathbb{R}$, $i=1 \ldots r$, model the constraints imposed on the design parameters $x$. The constraints define the set of feasible points $X := \{x \in \mathbb{R}^n \, : \, c_i(x) \leq 0, \ i=1 \ldots r \}$, consisting of all admissible designs. There exist many approaches for approximating the solutions of (\ref{eq:general_opt_problem}); see for instance \cite{Bertsekas1976, Bertsekas1999, Boyd2009, Conn2000, Powell1977}. The \textit{constraints}, in particular, can be handled in various ways. One approach to enforcing the constraints is to replace the objective function by a merit function that penalizes the violation of the constraints \cite{Bertsekas1999, Boyd2009, Conn2000}. The merit function can also be built using an inner barrier, penalizing proximity to the boundary of $X$ and thus guaranteeing strict feasibility of the optimal design. In either case, good penalty parameters must be chosen to obtain an efficient algorithm. Current implementations use iterative approaches with increasing penalty parameters; see, for example, \cite{Plantenga2009}. If the constraints are expensive to evaluate, reduced-order models can be used to reduce the computational costs \cite{Agarwal2013, Fletcher2002, March2012}. Instead of using merit functions, the constraints can also be enforced via a Lagrange approach, which is often implemented in combination with sequential quadratic programming methods \cite{Bertsekas1999, Liang2008}. Alternatively, \cite{Fletcher2002, Fletcher2002a} introduce a filtering technique whose aim is to minimize the objective function and establish feasibility of the optimal point using a filter set of non-dominating pairs of objective values and constraint violations. There are also methods for nonlinear constrained optimization that do not rely on penalties or filters; for instance, \cite{Curtis2013, Gould2010} introduce a trust-funnel method that sequentially reduces 
the value of the objective function and reduces constraint violations by taking steps tangential and normal to the feasible set.

Despite this wide array of constraint-handling approaches, most require derivative information on the objective function and the constraints, a situation which we do not wish to pursue in this paper. We will consider settings in which we have access to the objective function and the constraints only as black-box evaluations. Our setting is therefore derivative-free: we assume that derivatives of the objective and constraints are either unavailable or computationally too expensive to obtain.
Moreover, we are interested in situations where we are not able to evaluate the objective function and the constraints exactly. Different methodologies have been proposed in this context, yet they typically assume increasing accuracy of the computations while approaching a critical point; see, e.g., \cite{Carter1991, Choi2000, Heinkenschloss2002}. In situations where the objective function and the constraints are only available as black-box evaluations, however, we may not be able to adjust or even bound the accuracy of these evaluations---i.e., the magnitude of the numerical errors or other perturbations to the functions $f$ and $c_i$.  This paper will therefore address the regime where we have neither control nor \textit{a priori} knowledge on the inexactness of function evaluations.

The development of derivative-free optimization methods began in the 1960s, when Hooke and Jeeves \cite{Hooke1961} and Nelder and Mead \cite{Nelder1965}, see also \cite{Spendley1962}, were among the first to propose local direct search methods which only require black-box evaluations of the objective function. Since then, many other direct search methods have been proposed. For unconstrained programming we refer to \cite{Torczon1991, Torczon1997} and the references therein. Some direct search methods have been applied in the context of inexact function evaluations: for instance, the implicit filtering method \cite{Bortz1998, Kelley1999}. 

There are also derivative-free algorithms for computing the solutions of constrained nonlinear programs. Jones et al.\ \cite{Jones1993}
proposed the DIRECT method for derivative-free optimization with box constraints and studied its convergence. Later, this method and its convergence theory were extended to general nonlinear constraints; see \cite{Finkel2004}. Other direct search methods for constrained optimization problems have been proposed in \cite{DiPillo2013, Liuzzi2006, Liuzzi2010}; these have been progressively generalized from local optimization with linear constraints, to local optimization with nonlinear constraints, and finally to global optimization. 
Another direct search method is the mesh adaptive direct search method (MADS) as introduced in \cite{Abramson2006}; in MADS the constraints are treated by an extreme barrier \cite{Audet2008, Audet2006} or by a progressive barrier \cite{Audet2009} method.

A different class of derivative-free optimization methods approximate the objective function using local surrogate models. COBYLA \cite{Powell1994, Powell1998}, for instance, is a widely used algorithm based on linear models of the objective. It handles constraints using a penalty approach based on a linear approximation. In \cite{March2012}, a low-fidelity surrogate model for the constraints is used to build a merit function based on a quadratic penalty. However, a rigorous proof that these methods converge to solutions of \eqref{eq:general_opt_problem} is not available.  Surrogate models based on radial basis functions have also been successfully applied in a derivative-free trust region setting \cite{Regis2011, Regis2014, Wild2008, Wild2011}. A recent effort \cite{Regis2014} in this line of work discusses the derivative-free method COBRA for nonlinear constrained optimization, wherein a safety margin is added to the constraints. A prominent difference between COBRA and the present work lies in our adaptive handling of constraints via an inner boundary path, as opposed to the constant offset used in COBRA. A generalization of the trust-funnel method to derivative-free constrained optimization is presented in \cite{Sampaio2015}. Here, a key difference with the present work lies in our feasibility requirement for every trial step; the trust-funnel approach, on the other hand, allows for constraint violations which are reduced while approaching a critical point. We refer to \cite{Conn2009a, Lewis2000, Rios2013} for further overviews of derivative-free approaches.

The main contributions of this paper are threefold. First, we present the algorithm NOWPAC ({N}onlinear {O}ptimization {w}ith {P}ath-{A}ugmented {C}onstraints), which is based on a trust region framework. The algorithm introduces a new way of handling nonlinear black-box constraints using an \textit{inner boundary path}, which is an offset function to the constraints whose purpose is to locally convexify the feasible domain. Besides this convexification, the inner boundary path guides the next trial step to become feasible, and therefore acceptable according to our proposed algorithm. Second, we develop a rigorous proof of convergence of the intermediate points computed by NOWPAC to a first-order critical point. Third, we analyze the behavior of our algorithm in the presence of inexact evaluations of the objective function and the constraints. Specifically, we show that the magnitude of the errors must respect a particular asymptotic decay rate in order to guarantee convergence. Moreover, we provide an error indicator to detect corrupted evaluations in cases where the adjustment of the accuracy level is not possible. In the latter case we propose early termination of the optimization to avoid deterioration of the approximated optimal designs, and to save unnecessary evaluations of the objective function and constraints. Other than via early termination, our analysis does not attempt to reduce the impact of inexact evaluations, or to quantify the error in the approximated optimal designs that is due to inexact evaluations; we leave these developments to future work.

The remainder of the paper is organized as follows. Section \ref{sec:trust_region_algorithm} gives a brief introduction to the trust region methodology; for more details, the reader is referred to \cite{Conn1993, Conn2009, Conn2009a, Powell2002, Powell2003}. Section \ref{sec:NOWPAC} presents the algorithm NOWPAC. In Section \ref{sec:convergence_proof}, we prove the convergence of the intermediate points computed by NOWPAC to a first-order critical point. Thereafter, in Section \ref{sec:practical_aspects}, we discuss practical aspects of the implementation of our proposed algorithm. The proof of convergence is presented under the assumption of accurate evaluations of the objective function and the constraints. In practice, however, we may be faced with irreducible errors in the evaluations. Section \ref{sec:noisy_function_evaluations} thus discusses the behavior of NOWPAC in the latter setting, deriving the asymptotic bounds and error indicator described above. In Section \ref{sec:numerical_results} we numerically demonstrate NOWPAC using two test examples, the Schittkowski benchmark set \cite{Schittkowski1987, Schittkowski2008}, and a model of an industrial tar removal process used in converting biomass to liquid fuel. Concluding remarks and a sketch of future work are given in Section \ref{sec:conclusions}.

\section{The trust region framework}\label{sec:trust_region_algorithm} In this section we introduce the derivative-free trust region framework used to approximate the solution of (\ref{eq:general_opt_problem}). Trust region methods start from an initial point $x_0$ and compute a series of intermediate points $\{x_k\}_{k\in \mathbb{N}_0}$ that converge to a critical point $x^\ast$. To help compute $x_{k+1}$, trust region methods build surrogates of the objective function $f$ and the constraints $\{c_i\}_{i=1}^r$, denoted by $m_{x_k}^{f}$ and $\{m_{x_k}^{c_i}\}_{i=1}^r$ respectively, within a neighborhood of the current point $x_k$. The point $x_{k+1}$ is then determined from the surrogates as a point that suitably reduces the objective function while staying within the neighborhood of $x_k$ and satisfying the constraints. The neighborhood is called the trust region, $B(x_k, \rho_k) := \{ x \in \mathbb{R}^n \; : \; \|x - x_k\| \leq \rho_k\}$, with trust region radius $\rho_k$, $k \in \mathbb{N}_0$. Note that there are many possible choices of surrogate; among these, polynomial response surfaces \cite{Conn2009, Powell2000, Powell2002, Powell2004} are widely used. But other approximation methods can be employed as well; for example, radial basis functions are used to create the surrogates in \cite{Wild2011}. The particular choice of surrogate models $m_{x_k}^{f}$ and $\{m_{x_k}^{c_i}\}_{i=1}^r$ for the objective function and the constraints is beyond the scope of this work, and we will not go into details on how to compute them. In general, any surrogates that are twice continuously differentiable and that satisfy
\begin{subequations}\label{eq:fully_linear_equations}
\begin{align}
\Big| f(x + s) - m_x^f(x + s)\Big| & \leq \kappa_{f} \rho^{2} \label{eq:fully_linear_f}\\
\Big| c_i(x + s) - m_x^{c_i}(x + s)\Big| & \leq \kappa_{c} \rho^{2}\label{eq:fully_linear_c}\\
\Big\| \nabla f(x + s) - \nabla m_x^f(x + s)\Big\| & \leq \kappa_{df} \rho\label{eq:fully_linear_df}\\
\Big\| \nabla c_i(x + s) - \nabla m_x^{c_i}(x + s)\Big\| & \leq \kappa_{dc} \rho\label{eq:fully_linear_dc}
\end{align}
\end{subequations}
with constants $\kappa_{f}$,  $\kappa_{c}$, $\kappa_{df}$, $\kappa_{dc} > 0$, for all $x + s \in B(x, \rho)$, $i = 1 \ldots r$, are admissible. In our implementation of NOWPAC, we use quadratic minimum-Frobenius-norm surrogates; see \cite{Powell2004}. Models satisfying (\ref{eq:fully_linear_equations}) are called \textit{fully linear} within the trust region $B(x, \rho)$. All of the surrogates previously mentioned satisfy these conditions, if certain geometry conditions on the sampling points of the model are satisfied; see \cite{Conn2008, Powell2004, Wild2011}. With the surrogates denoted by $m_{x}^f(x + s)$ and $m_{x}^{c_i}(x + s)$, the corresponding gradients and Hessians at $s = 0$ are $g_x^f$, $g_x^{c_i}$ and $H_x^f$, $H_x^{c_i}$, respectively, for $i = 1 \ldots r$. Additionally, we assume the following:
\medskip
\begin{assumption}\label{ass:envelope_of_fully_linear_models}
For every constraint $i=1\ldots r$, there exists a bounding function 
\[
b_{c_i} \,:\,\left\{\begin{array}{lcl}
\mathbb{R}^n \times \mathbb{R}^{n} \times [0, 1] &\rightarrow &\mathbb{R}\\
(s; x, \rho) & \mapsto & b_{c_i}(s; x, \rho)
\end{array}\right.
\]
such that 
$
m_x^{c_i}(x+s) \leq c_i(x+s) + b_{c_i}(s; x, \rho)
$
for all $s \in B(0, \rho)$ and $(x, \rho) \in X \times [0,1]$. The bounding function is continuous in $(x, \rho)$ and satisfies $b_{c_i}(0, x, \rho) = 0$ as well as $b_{c_i} \leq \kappa_{\lambda_1} \rho$ for a constant $\kappa_{\lambda_1} > 0$ sufficiently large. At every $(x, \rho) \in X \times [0, 1]$, we also assume that $b_{c_i}(\,\cdot\,;x,\rho )$ is continuously differentiable with Lipschitz continuous gradient in $B(0, 1)$ and convex in $B(0, \rho)$.
\end{assumption}

\smallskip
Assumption~\ref{ass:envelope_of_fully_linear_models} is satisfied by many surrogate models. In particular, we point to Lemma \ref{lem:minFrobeniusnormModelsSatisfyAssumpion2.1}, where we explicitly construct the bounding function $b_c$ for quadratic minimum-Frobenius-norm models.

Before we state the trust region algorithm in the next section, we introduce a few general assumptions on the objective function and the constraints.
\medskip
\begin{assumption}\label{ass:general_assumptions}
The objective function $f$ and the constraints $\{c_i\}_{i=1}^r$ satisfy:
\begin{enumerate}
\item[(a)] $\mathcal{L} := X \cap \{x \in \mathbb{R}^n \; : \; f(x) \leq f(x_0)\}$ is compact, 
\item[(b)] $f$ and $c_i$ are continuously differentiable on $\mathcal{L}$ and have Lipschitz continuous gradients,
\item[(c)] $\left\| \nabla c_i(x) \right\| \geq \kappa_{bdc}$ for all $x$ on the boundary of $X$,
\item[(d)] at every critical point $x^\ast$ of $f$ in $X$ the Abadie constraint qualification condition, see \cite{Bazaraa2006}, holds,
\item[(e)] at any point on the boundary of the feasible domain $X$ with more than one active constraint, any two normals to the active constraints enclose an angle strictly less than $\pi$.
\end{enumerate}
\end{assumption}
\medskip

\begin{assumption}\label{ass:bound_on_model_Hessian}
There exists a constant $\kappa_{bh} > 0$ such that $\|H_x^f\| \leq \kappa_{bh}$ and $\|H_x^{c_i}\| \leq \kappa_{bh}$ for all $i = 1 \ldots r$.
\end{assumption}

\medskip

We note that Assumptions \ref{ass:general_assumptions}(a)--(d) ensure the existence of a solution of the optimization problem (\ref{eq:general_opt_problem}), which can be identified by first-order criticality conditions
%
using linearized constraints. Assumption \ref{ass:general_assumptions}(e) excludes constraints that have the same tangent at a point on the boundary of the feasible domain, and hence excludes situations where two active constraints touch, as illustrated in Figure \ref{fig:exampleforinnerboundarypath} (right). We note that Assumption \ref{ass:general_assumptions}(e) in particular excludes equality constraints. 
We further remark that, due to the continuity of $\nabla f$ and $\{\nabla c_i\}_{i=1}^r$ in Assumption \ref{ass:general_assumptions}, we also have $\|\nabla f\| \leq \kappa_{bdf}$ and $\|\nabla c_i\| \leq \kappa_{ubdc}$, $i = 1 \ldots r$, for all $x$ in the compact set $\mathcal{L}$.

\section{The algorithm NOWPAC}\label{sec:NOWPAC}
In this section we introduce the derivative-free algorithm NOWPAC for approximating local critical solutions of the nonlinear constrained problem (\ref{eq:general_opt_problem}). The notation and basic structure follow closely along the lines of \cite{Conn1993, Conn2009a}; however we introduce significant changes in order to treat the nonlinear constraints as black-box evaluations. The outline of this section is as follows: In Section \ref{subsec:preliminaries} we introduce some necessary notation and state assumptions on the sufficient descent of the objective model in every trust region step. In Section \ref{subsec:thealgorithm} we describe NOWPAC itself as Algorithm \ref{alg:algorithm1}.

\subsection{Preliminaries}\label{subsec:preliminaries}

First we introduce an offset to the constraint function, the {\it inner boundary path},
\begin{equation}\label{eq:definition_of_inner_boundary_path}
h_{x}(x+d) :\begin{cases}
\;\mathbb{R}^n &\rightarrow \;\mathbb{R}\\
\;x+d &\mapsto \;\varepsilon_b\left\| d\right\|^{\frac{2}{1+p}},
\end{cases}
\end{equation}
with order reduction $p \in\: ]0, 1[$ and define the \textit{inner-boundary-path-augmented local feasible domain} at $x \in X$ as
\begin{equation}\label{eq:definition_of_Xxibp}
X_x^{ibp} := \left\{ x+d \;:\; c(x + d) + h_x(x + d) \leq 0 \right\} \cap B(x, 1).
\end{equation}
Note that the abbreviated notation $c$ represents the corresponding expressions for all $\{c_i\}_{i=1}^r$, e.g., $c(x) \leq 0$ means $c_i(x) \leq 0$ for all $i = 1 \ldots r$. Subsequently, we will use this abbreviation also for the surrogate models, i.e., $m_k^c(x) + h_k(x) \leq 0$. We illustrate the inner-boundary-path-augmented local feasible domains $X_x^{ibp}$ and the role of the inner boundary path $h_x$ in Figure \ref{fig:exampleforinnerboundarypath}, where the inner boundary path constant $\varepsilon_b > 0$ is chosen large enough to achieve a local convexification of $X_x^{ibp}$  (cf.\ Assumption~\ref{ass:convexification_through_ibp} below).
\begin{figure}[t]
\psfrag{T1}[c][cb][.8]{$x_{k_1}$}
\psfrag{T2}[c][t][.8]{$X_{x_{k_1}}^{ibp}$}
\psfrag{T3}[c][cb][.8]{$x_{k_2}$}
\psfrag{T4}[c][c][.8]{$X_{x_{k_2}}^{ibp}$}
\psfrag{T5}[c][c][.8]{$X$}
\begin{center}
\includegraphics[bb= 30 90 540 370, scale=.48]{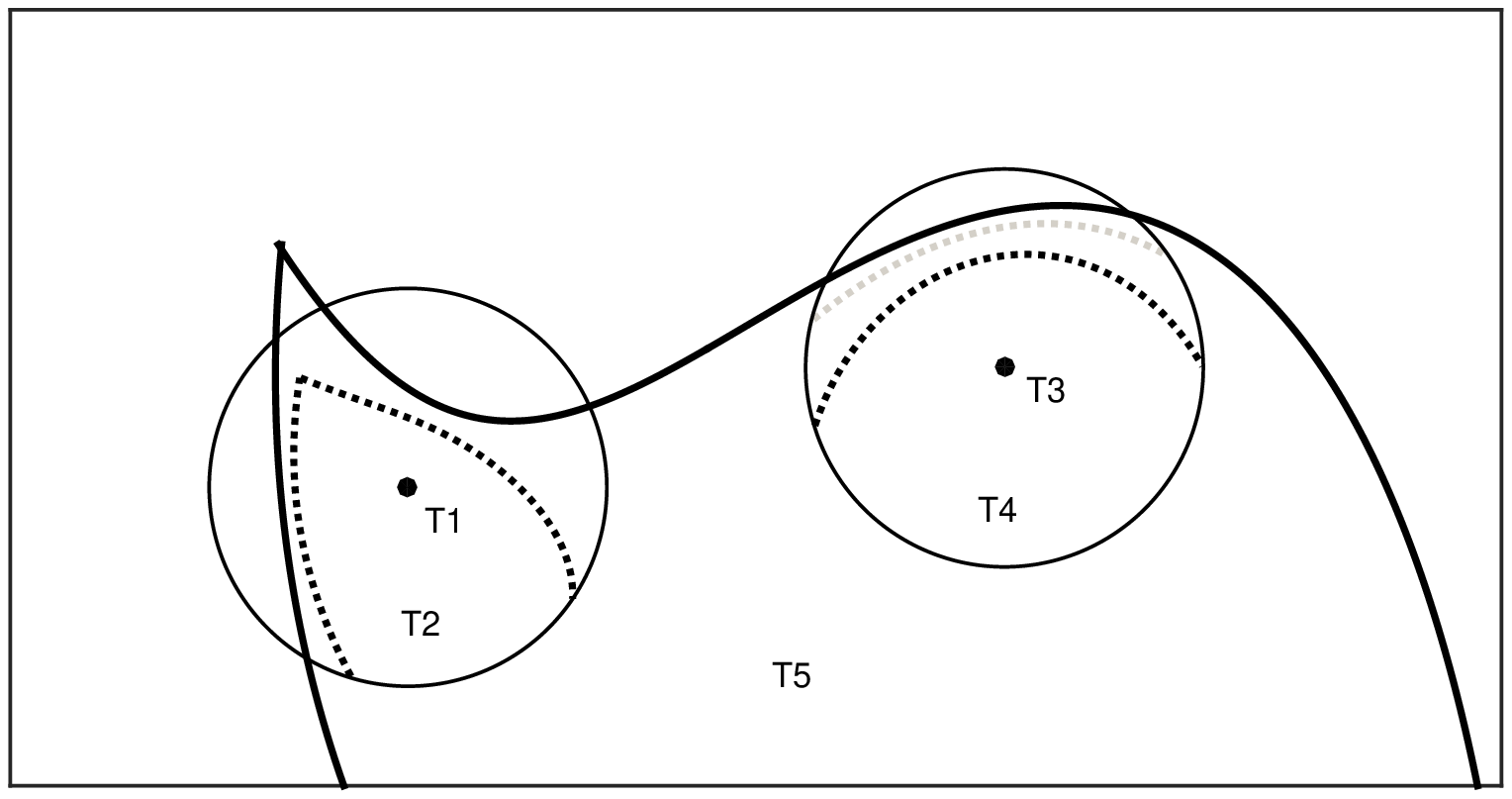}
\psfrag{T1}[c][c][.8]{$X$}
\psfrag{T2}[c][lb][.8]{$x$}
\psfrag{T3}[c][rb][.8]{$x_{k}$}
\psfrag{T4}[c][c][.75]{$X_{x_k}^{ibp}$}
\includegraphics[bb=20 22 530 420, scale=0.315]{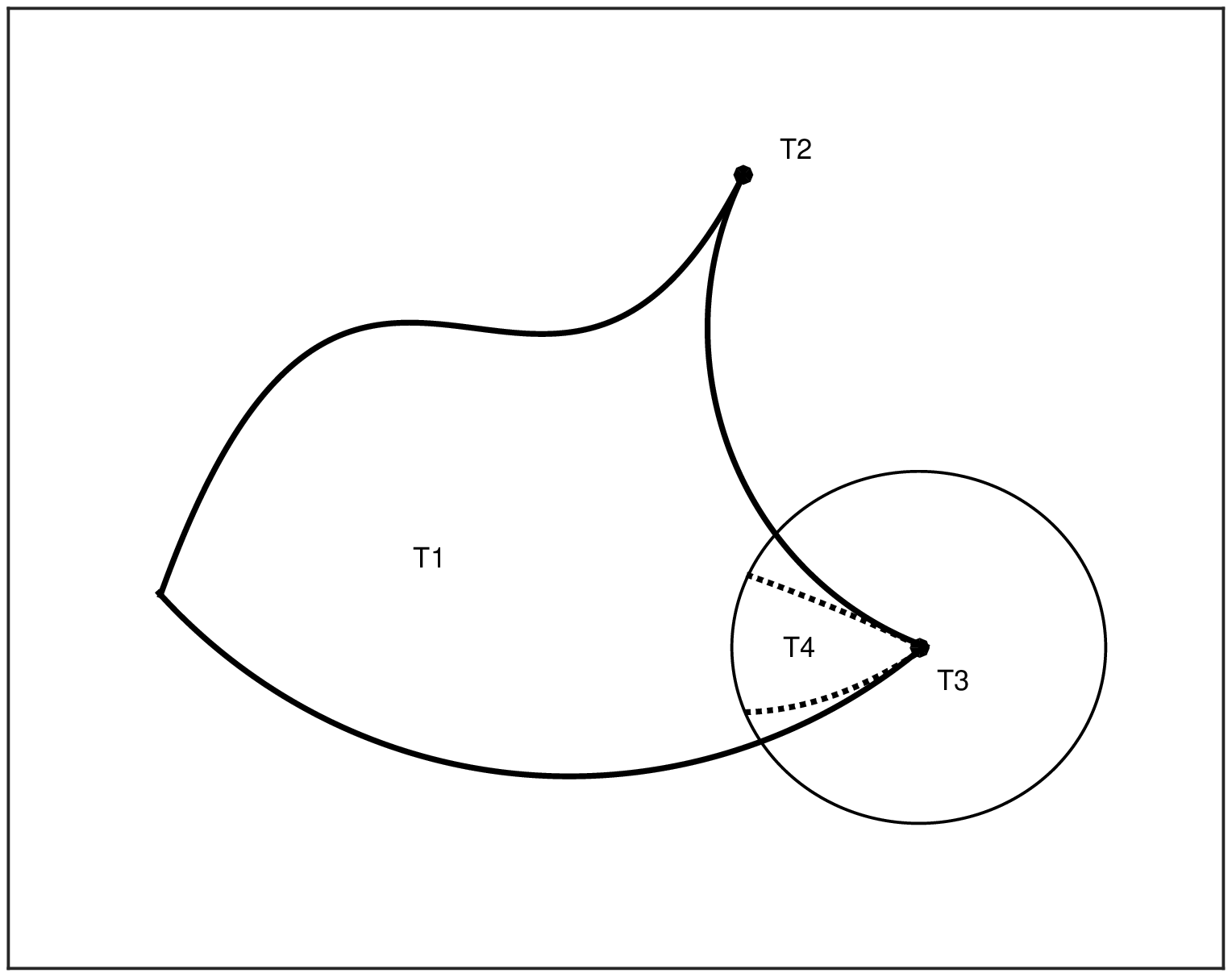}
\caption{Left: Local convexification (black dotted lines) of the feasible domain $X$ (solid line) around two designs $x_{k_1}$ and $x_{k_2}$. Here an inner boundary path constant $\varepsilon_b = 10$ with $p = 0.2$ is chosen to overcome the negative curvature, i.e., the concavity, of the exact constraints $c$. The circle represents a ball around $x_k$ with radius $1$. The gray dotted line represents the local convexification resulting from a scaled inner boundary path constant $\varepsilon_b = 10 \left(\frac12\right)^2$, cf.\ \eqref{eq:adjusted_inner_boundary_constant}. Right: Example of a feasible domain excluded by Assumption \ref{ass:general_assumptions}(e). Point $x$ is not admissible since the interior of the associated inner-boundary-path-augmented feasible domain is empty due to the touching constraints.
}\label{fig:exampleforinnerboundarypath}
\end{center}
\end{figure}
The importance of adding the inner boundary path $h_x$ to $m_x^c$ will become obvious in the discussion of the convergence of Algorithm \ref{alg:algorithm1} in Section \ref{sec:convergence_proof}. To quickly motivate the inner boundary path, however, we remark that it serves mainly two purposes: first, it locally convexifies the constraints around the point $x \in X$, as described in Assumption \ref{ass:convexification_through_ibp}; second, it helps push all iterates away from the boundary and towards the inner part of the feasible domain $X$. We only use one common inner boundary path constant $\varepsilon_b$ for all constraints to keep notation simple, but it is straightforward to extend the subsequent analysis to a set $\{\varepsilon_b^i\}_{i=1}^r$ of individual inner boundary path constants for each constraint $c_i$, $i = 1  \ldots r$.

Next we define the extended model
\begin{equation}\label{eq:cont_convex_extension_of_constraint_models}
M_x^c(x) := \begin{cases} 
m_x^c(x) & \mbox{if}\; x \in B(x, \rho)\\
\bar{m}_x^c(x) & \mbox{if}\; x \in B(x, 1) \backslash B(x, \rho),
\end{cases}
\end{equation}
where $\bar{m}_x^c$ is a smooth extension of the local surrogates $m_x^c$ to the ball $B(x, 1)$, if $\rho < 1$. If $\rho \geq 1$, then $M_x^c$ is simply the fully linear surrogate model $m_x^c$ within $B(x, \rho)$. 
Based on the extended surrogate model (\ref{eq:cont_convex_extension_of_constraint_models}) we define the \textit{approximated feasible domain}
\begin{equation}\label{eq:extended_approx_ibp_augmented_local_feasible_domain}
X_x := \left\{ x + d \;:\; M_x^c(x + d) + h_{x}(x + d) \leq 0\right\} \cap B(x, \max\{\rho, 1\}).
\end{equation} 
In order to have a reasonable approximation of $X_x^{ibp}$ by the approximated feasible domain $X_x \, \cap \, B(x, 1)$ for vanishing trust region radii $\rho \rightarrow 0$, we assume that $M_x^c$ is continuously differentiable with Lipschitz continuous gradient in $B(x, 1)$ with $|\bar{m}_x^c(x) - c(x)| \leq \kappa_{\lambda_2} \rho_k$ for a sufficiently large constant $\kappa_{\lambda_2} > 0$.
Finally we define the {\it inner and outer approximation sets}
\begin{align}\label{eq:extended_approx_ibp_augmented_local_feasible_domain_plus}
X_{(x, \rho)}^{+} &:= \left\{ x + d \;:\; c(x + d) + h_x(x + d) + b_c(d; x, \rho) \leq 0\right\} \cap B(x, 1),\\
X_{(x,\rho)}^{-} &:= \left\{ x + d \;:\; c(x + d) + h_x(x + d) - \kappa_{\lambda_3}\rho \quad\;\;\;\; \leq 0\right\} \cap B(x, 1),\nonumber
\end{align}
where $\kappa_{\lambda_3} > 0$ is chosen large enough such that $X_{(x, \rho)}^+ \subseteq X_x \subseteq X_{(x, \rho)}^-$, justifying the labels of `inner' and `outer' approximation. We will make use of this inclusion of the sets  $X_{(x, \rho)}^+$, $X_x$, $X_{(x, \rho)}^-$ in Lemma~\ref{lem:quality_of_approx_criticality_measure}. Note that the inner and outer approximations $X_{(x, \rho)}^+$ and $X_{(x, \rho)}^-$ to the approximated feasible domain $X_x$ are defined by lower and upper bounds on \textit{all} possible local surrogate models and are therefore independent of the particular surrogate model that defines $X_x$.
Henceforth we will use a single subscript $k$ for any of the quantities introduced above when referring to a particular point $x_k \in X$, i.e., $X_k^{ibp} := X_{x_k}^{ibp}$, $X_k := X_{x_k}$, as well as $m_k^f := m_{x_k}^f$ and $m_k^c := m_{x_k}^c$, etc.

\medskip

\begin{assumption}\label{ass:convexification_through_ibp}
Assume that $\varepsilon_b > 0$ is large enough such that the sets $X_x^{ibp}$ and $X_k$ and the inner and outer approximations $X_{(x,\rho)}^{\pm}$ are strictly convex.
\end{assumption}

\smallskip

Lemma \ref{lem:local_convexification} shows the existence of an inner boundary path constant $\varepsilon_b$ such that Assumption~\ref{ass:convexification_through_ibp} is satisfied.

Before we state the algorithm we have to define a measure for criticality, i.e., a measure for the proximity of the current iterate $x_k$ to a critical point: 
\begin{equation}\label{eq:approx_criticality_measure}
\alpha_k(\rho_k) := \frac{1}{\rho_k} \left| \min\limits_{\substack{x_k+d \in X_k\\ \|d\| \leq \rho_k}} \left\langle g_k^f, d\right\rangle\right|.
\end{equation}
Since $\alpha_k$ uses models for both the objective and the constraints, we call it the \textit{approximated criticality measure}.  We refer to Section~\ref{sec:crit_measure} for a detailed discussion of $\alpha_k$.
Besides a measure for criticality, we also need to specify an initial point $x_0$ and a maximal trust region radius $\rho_{\max} > 0$. Moreover, we set the initial trust region radius to $\rho_0 \in \;]\rho_{\min}, \rho_{\max}]$, $\rho_{\min} > 0$, and specify technical parameters as shown in Table \ref{tab:technical_parameters}.
\begin{table}
\caption{Technical parameters and their admissible ranges as used in Algorithm~\ref{alg:algorithm1}; see also Sections \ref{sec:practical_aspects} and \ref{sec:numerical_results}. $^{(\star)}$Additional restrictions: $\eta_1 > 0$, $\varepsilon_b$ large enough (see Assumption \ref{ass:convexification_through_ibp}).}\label{tab:technical_parameters}
\begin{center}\footnotesize
\renewcommand{\arraystretch}{1.3}
\begin{tabular}{|c|c|c|}\hline
symbol & range & description\\\hline\hline
$\varepsilon_b$ & $]0, \infty[$ & inner boundary path constant$^{(\star)}$\\
$\eta_0$        & $[0, 1[$      & step rejection parameter\\
$\eta_1$        & $[\eta_0, 1[$ & step acceptance parameter$^{(\star)}$\\
$\gamma_{inc}$  & $]1, \infty[$ & increment factor for trust region\\
$\gamma$        & $]0, 1[$      & decrease factor for trust region\\
$\varepsilon_c$ & $]0, \infty[$ & lower bound on trusted criticality measure\\
$\omega$        & $]0, 1[$      & decrease factor for trust region\\
$\mu$           & $]0, \infty[$ & factor for bound on trust region radius by $\alpha$\\\hline
\end{tabular}
\end{center}
\end{table}
As already mentioned in Section~\ref{sec:trust_region_algorithm}, our trust region algorithm computes, starting from the current iterate $x_k$, a trial step $x_k + s_k \in X_k \cap B(x_k, \rho_k)$. For the computation of the trial step $s_k$ we impose the following assumptions:
\smallskip
\begin{assumption}\label{ass:stepassumptions}
The trial step $s_k$ computed by Algorithm \ref{alg:algorithm1} satisfies
\begin{enumerate}
\item[(a)] $m_k^f(x_k) - m_k^f(x_k + s_k) \geq \mu_1 \alpha_k(\rho_k)\rho_k$, \ \text{and} 
\item[(b)] $\|s_k\| \geq \min\left\{ \mu_2 \rho_k^{1+q}, \mu_3\right\}$
\end{enumerate}
for $q < p$, $\mu_1, \mu_2, \mu_3 \in \;]0, 1]$, as well as the feasibility and trust region condition $x_k + s_k \in X_k \cap B(x_k, \rho_k)$.
\end{assumption}

\smallskip
The first assumption ensures that the step yields a sufficient descent in the objective model, whereas the second assumption keeps the step sizes from becoming too small. Note that the parameter $p$ in Assumption \ref{ass:stepassumptions} is the same as in the definition of the inner boundary path  (\ref{eq:definition_of_inner_boundary_path}).
We defer discussion of the existence of a step $s_k$ satisfying Assumptions \ref{ass:stepassumptions} to Section \ref{sec:practical_aspects}.

\subsection{The algorithm}\label{subsec:thealgorithm}
We present the complete derivative-free trust region algorithm NOWPAC in Algorithm~\ref{alg:algorithm1}.
%
\begin{algorithm}[!h]
\DontPrintSemicolon
Construct the initial fully linear models $m_0^{f}(x_0 + s)$, $m_0^{c}(x_0 + s)$.\;

\For{k = 0, 1, \ldots}{\hrule
\tcc{STEP 1: Criticality step}
\If{$\alpha_k(\rho_k) \leq \varepsilon_c$}
{
\If{$m_k^{f}$ and $m_k^{c}$ are not fully linear in $B\left(x_k, \rho_k\right)$ or $\rho_k > \mu \alpha_k(\rho_k)^{\frac1{q}}$\vspace{1mm}}
{
Set $\rho_k = \omega \rho_k$\;\vspace{-0.5mm}
Construct fully linear models $m_k^f$ and $m_k^c$\;
Go to line {\tt 4}\;
}
}

\hrule\tcc{STEP 2: Step calculation}
Compute a trial step $s_k$ that satisfies Assumptions \ref{ass:stepassumptions}\;

\hrule\tcc{STEP 3: Check feasibility of trial point}
\If{$c(x_k + s_k) > 0$}
{
Set $\rho_k = \gamma \rho_k$ and update $m_k^f$ and $m_k^c$ accordingly to obtain fully linear models\;
Go to {\tt STEP 1}
}

\hrule\tcc{STEP 4: Check acceptance of trial point}
Compute $r_k = \frac{f(x_k) - f(x_k + s_k)}{m_k^f(x_k) - m_k^f(x_k + s_k)}$\;
\eIf{$r_k \geq \eta_0$}
{
Set $x_{k+1} = x_k + s_k$\;
Include $x_{k+1}$ into the node set and update the models to $m_{k+1}^{f}$ and $m_{k+1}^{c}$\;
}{
Set $x_{k+1} = x_k$, $m_{k+1}^{f} = m_k^f$ and $m_{k+1}^{c} = m_k^c$\;
}

\hrule\tcc{STEP 5: Trust region update}
Set 
$
\rho_{k+1} = \begin{cases}
\gamma_{inc}\rho_k & \mbox{if} \quad r_k \geq \eta_1\\
\rho_k             & \mbox{if} \quad \eta_0 \leq r_k < \eta_1,\\
\gamma\rho_k       & \mbox{if} \quad r_k < \eta_0.\\
\end{cases}
$\vspace{2mm}\;

\hrule\tcc{STEP 6: Model improvement}
\If{$r_k < \eta_0$}{
Improve the quality of the models $m_{k+1}^{f}$ and $m_{k+1}^{c}$\;} 
\hrule
}

\caption{Nonlinear Optimization With Path-Augmented Constraints\label{alg:algorithm1}}
\end{algorithm}
We call iteration $k$ {\it successful} if the acceptance ratio $r_k$ exceeds the threshold $\eta_1$, whereas we call it {\it acceptable} if $\eta_0 \leq r_k < \eta_1$. Note that we do not specify a stopping criterion to terminate the algorithm. The usual approach in derivative-free trust region algorithms is to stop whenever the trust region radius falls below a prescribed threshold $\rho_{\min} > 0$. We will see in Section~\ref{subsec:conv_to_critical_point} that this is a reasonable stopping criterion for NOWPAC as well, since $\rho_k \rightarrow 0$ for the sequence $\{x_k\}_k$ converging to a critical point $x^\ast$. We therefore insert the line\\

$\,${\tt {\scriptsize \bf 22a}}$\;| \;\;$  {\bf if} $\rho_{k+1} < \rho_{min}$ {\bf then} {\it stop}.\\

\noindent into Algorithm \ref{alg:algorithm1} below for the actual implementation of NOWPAC. However, since we want to examine the asymptotic behavior of the iterates as $k \rightarrow \infty$, we do not include a stopping criterion in the forthcoming theoretical investigations. For more practical aspects of the implementation of NOWPAC see Section \ref{sec:practical_aspects}.

Finally we note that the construction of fully linear models in lines 6, 18, and 24 of Algorithm \ref{alg:algorithm1} can be completed in a finite number of steps. In particular, the model improvement {\tt STEP 6} computes at most finitely many intermediate points before the models become fully linear; see \cite{Conn2009a}.

\section{Convergence to first-order critical points}\label{sec:convergence_proof}
In this section we prove convergence of the intermediate points $\{x_k\}_k$ generated by Algorithm \ref{alg:algorithm1} to a first-order critical point $x^\ast$ of (\ref{eq:general_opt_problem}). We subdivide this section into three parts. First, in Section \ref{sec:crit_measure}, we show that if the approximated criticality measure $\alpha_k(\rho_k)$ evaluated at intermediate points $x_k$ vanishes to zero, then $x^\ast = \lim_{k\rightarrow \infty} x_k$ is a first-order critical point. For convergence, we then have to prove that Algorithm \ref{alg:algorithm1} indeed generates a sequence of intermediate points on which the approximated criticality measure converges to zero. In Section \ref{sec:successful_steps}, we show that Algorithm \ref{alg:algorithm1} computes $\{x_k\}_k$ without getting trapped in an infinite loop of infeasible or rejected steps in {\tt STEP~3} and {\tt STEP~4}. Thereafter, in Section \ref{subsec:conv_to_critical_point}, we complete the proof of convergence by showing that the sequence of intermediate points generated by Algorithm \ref{alg:algorithm1} converges to a first-order critical point.
The general ideas within Section \ref{sec:convergence_proof} follow along the lines of \cite{Conn1993}. But we develop additional arguments in order to show convergence in the case of approximated constraint functions. 


\subsection{The criticality measure}\label{sec:crit_measure}
When an optimal point $x^\ast$ is located at the boundary of the feasible set $X$, it is well known that the gradient is not necessarily an appropriate indicator for criticality. We therefore rely on the fact that $x^\ast \in X$ is a critical point if and only if
\begin{equation}\label{eq:criticality_criteria}
 -\nabla f(x^\ast) \in N(x^\ast),
\end{equation}
where $N(x) := \{ y \in \mathbb{R}^n \, : \, \left\langle y, u - x\right\rangle \leq 0, \; \forall u \in X\}$ denotes the normal cone of the set of feasible points $X$ at point $x$; due to Assumption \ref{ass:general_assumptions}(d) we can use the linearized constraints at $x^\ast$ to characterize the normal cone. Moreover, note that $N(x) = \{0\}$ whenever $x$ is an inner point of $X$ and (\ref{eq:criticality_criteria}) reduces to the gradient criterion $\|\nabla f(x^\ast)\| = 0$ for local first-order optimality. We now define the exact criticality measure,
\begin{equation}\label{eq:exact_criticality_measure}
 \mathfrak{A}[x] := \left| \min\limits_{x+d \in X_x^{ibp}} \left\langle \nabla f(x), d \right\rangle\right|,
\end{equation}
which gives the maximal possible decrease of the linearized objective function within $X_x^{ibp}$, i.e., the inner-boundary-path-augmented local feasible domain \eqref{eq:definition_of_Xxibp}. Note that the criticality subproblem (\ref{eq:exact_criticality_measure}) is linear in $d$ with $d=0$ always a feasible point, since $x \in X_x^{ibp}$ by definition. Thus the optimal value of the criticality subproblem, $\min_{x+d \in X_x^{ibp}} \left\langle \nabla f(x), d \right\rangle$, is necessarily less than or equal to zero and we can write 
\[
\mathfrak{A}[x] = -\left\langle \nabla f(x), d^\ast \right\rangle
\]
for $d^\ast := \argmin_{x+d \in X_x^{ibp}} \left\langle \nabla f(x), d \right\rangle$. We will frequently use this relation between the absolute value and the negation of the optimal value of the criticality subproblem within the subsequent proofs.

The following lemma shows that $\mathfrak{A}[x] = 0$ is a reasonable indicator for criticality of the point $x \in X$.

\medskip
\begin{lemma}\label{lem:exact_criticality_measure}
Under Assumptions \ref{ass:general_assumptions}, the point $x \in X$ is critical if and only if $\mathfrak{A}[x] = 0$.
\end{lemma}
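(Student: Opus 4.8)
The plan is to establish the two implications separately, in each case passing through the first-order criticality characterization \eqref{eq:criticality_criteria}, i.e., $-\nabla f(x) \in N(x)$, where $N(x)$ is described by the linearized active constraints thanks to the Abadie constraint qualification in Assumption \ref{ass:general_assumptions}(d). The key geometric observation to exploit is that the inner boundary path $h_x$ satisfies $h_x(x) = 0$ and is of order $\|d\|^{2/(1+p)}$ with $2/(1+p) > 1$; hence $h_x$ is differentiable at $d = 0$ with vanishing gradient. Consequently, the linearization of the augmented constraint $c_i(x+d) + h_x(x+d)$ at $d=0$ coincides with the linearization of $c_i$ itself, so the tangent cone of $X_x^{ibp}$ at $x$ agrees with the linearized feasible cone at $x$, and therefore with the tangent cone of $X$ at $x$ under Assumption \ref{ass:general_assumptions}(d). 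This identifies the feasible directions entering the minimization in \eqref{eq:exact_criticality_measure}, to first order, with the tangent cone $T(x)$, whose polar is exactly $N(x)$.

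For the direction ``$\mathfrak{A}[x] = 0 \Rightarrow x$ critical'': if $\mathfrak{A}[x] = 0$, then $\langle \nabla f(x), d\rangle \geq 0$ for every $x + d \in X_x^{ibp}$. I would first argue that this forces $\langle \nabla f(x), v \rangle \geq 0$ for every $v$ in the linearized cone at $x$: given such a $v$ with $\|v\|$ small, the curve $t \mapsto x + tv$ has augmented-constraint values $c_i(x+tv) + h_x(x+tv) \le \langle \nabla c_i(x), tv\rangle + o(t) + \varepsilon_b t^{2/(1+p)}\|v\|^{2/(1+p)}$; when $v$ is a strict descent direction for every active linearized constraint this is $\le 0$ for $t$ small, and a density/closedness argument (using the convexity of $X_x^{ibp}$ from Assumption \ref{ass:convexification_through_ibp} and continuity of $\nabla c_i$) extends the inequality to the whole boundary of the cone. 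Then $\langle \nabla f(x), v\rangle \ge 0$ for all $v \in T(x)$ means $-\nabla f(x) \in T(x)^\circ = N(x)$, which is \eqref{eq:criticality_criteria}.

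For the converse ``$x$ critical $\Rightarrow \mathfrak{A}[x] = 0$'': suppose $-\nabla f(x) \in N(x)$, so $\langle \nabla f(x), u - x\rangle \ge 0$ for all $u \in X$. Since $X_x^{ibp} \subseteq X \cap B(x,1)$ (the inner boundary path only shrinks the feasible set: $c + h_x \le 0$ implies $c \le 0$ because $h_x \ge 0$), every $x + d \in X_x^{ibp}$ lies in $X$, hence $\langle \nabla f(x), d\rangle \ge 0$; combined with $d = 0$ being feasible, the minimum in \eqref{eq:exact_criticality_measure} is exactly $0$, so $\mathfrak{A}[x] = 0$. This direction is essentially immediate. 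The main obstacle is the forward direction: specifically, the claim that $\mathfrak{A}[x]=0$ rules out descent directions that are tangent to the active constraints but not strictly interior to the linearized cone — i.e., controlling the interplay between the first-order term $\langle \nabla c_i(x), d\rangle$ (which may vanish) and the superlinear but still sub-quadratic term $\varepsilon_b \|d\|^{2/(1+p)}$ along such boundary directions. Here I expect to need Assumption \ref{ass:general_assumptions}(c) (gradients of active constraints are bounded away from zero, so there is genuine room to perturb into the interior) and Assumption \ref{ass:general_assumptions}(e) (active-constraint normals enclose an angle strictly less than $\pi$, so the linearized cone has nonempty interior and one can find a common strictly-interior direction), together with the convexity of $X_x^{ibp}$ to pass from interior directions back to the closure.
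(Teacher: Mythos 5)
Your proof is correct, but it reaches \eqref{eq:criticality_criteria} by a different route than the paper. The paper treats the criticality subproblem in \eqref{eq:exact_criticality_measure} as a convex program: it uses Assumptions \ref{ass:general_assumptions}(c,e) together with the superlinear vanishing of $h_x$ to establish a Slater point for $X_x^{ibp}$, notes that $\nabla h_x$ vanishes at the center so the normal cones of $X$ and $X_x^{ibp}$ at $x$ coincide, and then reads both implications off the KKT system \eqref{eq:opt_conditions_for_exact_criticality_measure} evaluated at $d=0$ (necessity of KKT for the forward direction, sufficiency under convexity plus Slater for the converse). You avoid KKT entirely: your converse is a one-line set-inclusion argument ($h_x \geq 0$ gives $X_x^{ibp} \subseteq X$, so $\langle \nabla f(x), d\rangle \geq 0$ for every feasible $d$), which is cleaner than the paper's and does not even need convexity of $X_x^{ibp}$; your forward direction constructs feasible rays $x+tv$ along strictly-feasible directions of the linearized cone, exploiting exactly the right fact that $\varepsilon_b t^{2/(1+p)} = o(t)$ is dominated by the linear decrease of the active constraints, and then extends to the whole linearized cone by density and continuity before passing to the polar. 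The two key ingredients are the same in both proofs --- the first-order neutrality of $h_x$ at the center and the existence of a common strictly-interior direction guaranteed by Assumptions \ref{ass:general_assumptions}(c,e) --- but your execution is more elementary and makes the role of each assumption more explicit, at the cost of a slightly longer argument (the density step and the reduction from the polar of the linearized cone to $N(x)$ via the Abadie condition are asserted rather than spelled out, though both are standard).
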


\begin{proof} 
First note that due to Assumptions \ref{ass:general_assumptions}(c, e) and the inner boundary path $h_x( \cdot )$ vanishing superlinearly (with exponent $\frac{2}{1+p}$) around the center point, the interior of $X_x^{ibp}$ is always non-empty, as illustrated in Figure \ref{fig:exampleforinnerboundarypath} (right). 
Thus, because of the convexity of $X_x^{ibp}$, the Slater condition holds for $X_x^{ibp}$. Now observe that the gradient of $h_x(x)$ is zero, which means that the normal cones to $X$ and $X^{ibp}_x$ at $x$ are identical. The criticality conditions for the criticality subproblem in (\ref{eq:exact_criticality_measure}) can be expressed as
\begin{equation}\label{eq:opt_conditions_for_exact_criticality_measure}
\begin{split}
0 \in 2 \lambda d + \nabla f(x) + N(x + d),
\quad x + d \in X_x^{ibp},\\ \|d\| \leq 1 \quad \mbox{and} \quad \lambda(d^Td - 1) = 0.\qquad\qquad
\end{split}
\end{equation}
We see that if $\mathfrak{A}[x] = 0$, then $d = 0$ is an optimal solution to the criticality subproblem in (\ref{eq:exact_criticality_measure}). Inserting $d = 0$ into (\ref{eq:opt_conditions_for_exact_criticality_measure}) yields  (\ref{eq:criticality_criteria}), i.e., $-\nabla f(x) \in N(x)$. On the other hand, if (\ref{eq:criticality_criteria}) holds, then the above conditions  (\ref{eq:opt_conditions_for_exact_criticality_measure}) are satisfied with $d = 0$ and $\lambda = 0$, implying $\mathfrak{A}[x] = 0$.
\end{proof}
\medskip

At this point the exact criticality measure $\mathfrak{A}[x_k]$ depends on the gradient $\nabla f$ of the objective function and the exact constraint functions $c$. In the context of derivative-free optimization we know neither the gradient nor the exact algebraic structure of the constraint functions. Thus, we are not able to evaluate the exact criticality measure and must modify it in order to obtain a criticality measure that we can evaluate. Accordingly, we replace $X_k^{ibp}$ with $X_k$ and substitute the model gradient $g_k^f$ for the exact gradient $\nabla f(x_k)$. The result is the approximated criticality measure $\alpha_k(\rho_k)$ given in (\ref{eq:approx_criticality_measure}) and used in Algorithm~\ref{alg:algorithm1}. At this point it remains to show that the approximated criticality measure serves our need to drive iterates of the algorithm to a critical point of (\ref{eq:general_opt_problem}). We address this problem with the following lemma.

\medskip
\begin{lemma}[Relation between the exact and approximated criticality measures]\label{lem:quality_of_approx_criticality_measure}
Under Assumption \ref{ass:general_assumptions}, let $\{x_k\}_k \in X$ be a sequence of points in the feasible domain X and $\{\rho_k\}_k$ a sequence of trust region radii with $\lim_{k\rightarrow \infty} \rho_k = 0$. It holds that
\[
\lim\limits_{k \rightarrow \infty} \alpha_{k}(\rho_{k}) = 0 \quad \Rightarrow \quad\lim\limits_{k\rightarrow \infty} \mathfrak{A}[x_{k}] = 0.
\]
\end{lemma}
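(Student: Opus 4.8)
The plan is to establish the contrapositive flavored statement directly: assuming $\alpha_k(\rho_k) \to 0$ and $\rho_k \to 0$, I want to show $\mathfrak{A}[x_k] \to 0$. The natural route is to pass from the approximated criticality subproblem (which is defined over $X_k$, using the model gradient $g_k^f$) to the exact one (defined over $X_k^{ibp}$, using $\nabla f(x_k)$), controlling the two sources of error separately: (i) the gap between $g_k^f$ and $\nabla f(x_k)$, which is $O(\rho_k)$ by full linearity \eqref{eq:fully_linear_df}; and (ii) the discrepancy between the feasible sets $X_k$ and $X_k^{ibp}$, which should be $O(\rho_k)$-small in an appropriate Hausdorff-type sense because of the sandwiching $X_{(x_k,\rho_k)}^+ \subseteq X_k \subseteq X_{(x_k,\rho_k)}^-$ together with the fact that $X_{(x_k,\rho_k)}^+$ and $X_{(x_k,\rho_k)}^-$ differ from $X_k^{ibp}$ only through offsets bounded by $\kappa_{\lambda_1}\rho_k$, $\kappa_{\lambda_3}\rho_k$ (and $\kappa_{\lambda_2}\rho_k$ from the extension $\bar m_k^c$).

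The key steps, in order: First, I would rewrite $\rho_k\,\alpha_k(\rho_k) = \big|\min_{x_k+d\in X_k,\ \|d\|\le\rho_k}\langle g_k^f,d\rangle\big|$ and $\mathfrak{A}[x_k] = \big|\min_{x_k+d\in X_k^{ibp}}\langle\nabla f(x_k),d\rangle\big|$, noting that in the latter $\|d\|\le 1$ is the binding ball constraint. Second, since $\mathfrak{A}[x_k]$ is a maximum-over-a-unit-ball quantity while $\alpha_k$ lives on a ball of radius $\rho_k$, I would use positive homogeneity: the optimal $d^\ast$ for $\mathfrak{A}$, scaled by $\rho_k$, is a candidate for a $\rho_k$-ball problem, modulo feasibility with respect to the correct set. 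So I would compare $\mathfrak{A}[x_k]$ to $\tfrac{1}{\rho_k}\big|\min_{x_k+d\in X_{(x_k,\rho_k)}^+,\ \|d\|\le\rho_k}\langle\nabla f(x_k),d\rangle\big|$ — the point being that $X_{(x_k,\rho_k)}^+$, being an inner approximation built by adding $b_c(d;x_k,\rho_k)\le\kappa_{\lambda_1}\rho_k$ to $c+h_{x_k}$, is contained in $X_k$, and its $\rho_k$-scaled version is "almost" $X_k^{ibp}$ up to an $O(\rho_k)$ shrinkage of the constraints. Using convexity (Assumption~\ref{ass:convexification_through_ibp}) and the Slater-type non-degeneracy from the proof of Lemma~\ref{lem:exact_criticality_measure} (the interior of $X_k^{ibp}$ is nonempty, uniformly along a convergent subsequence thanks to Assumption~\ref{ass:general_assumptions}(c,e)), I would argue that shrinking the feasible set by an $O(\rho_k)$ offset perturbs the optimal linear objective value by at most $O(\rho_k)\cdot\|\nabla f(x_k)\| = O(\rho_k)$, using the uniform bound $\|\nabla f\|\le\kappa_{bdf}$ on $\mathcal L$. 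Third, I would swap $\nabla f(x_k)$ for $g_k^f$ at the cost of $\kappa_{df}\rho_k\cdot\|d\| \le \kappa_{df}\rho_k^2$ inside the $\rho_k$-ball problem, i.e. an $O(\rho_k^2)$ error after dividing by $\rho_k$ — harmless. Combining, I would obtain an inequality of the shape $\mathfrak{A}[x_k] \le \alpha_k(\rho_k) + C\rho_k$ for a constant $C$ depending only on the $\kappa$'s, and then let $k\to\infty$.

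The main obstacle I anticipate is Step two: making rigorous the claim that replacing $X_k^{ibp}$ by an $O(\rho_k)$-perturbed feasible set changes the optimal value of a bounded linear program by only $O(\rho_k)$. This is a stability/sensitivity estimate for a parametric convex program, and it genuinely relies on a uniform Slater constant — i.e., a ball of radius bounded below by some $\delta>0$ inside each $X_k^{ibp}$ — which is where Assumptions~\ref{ass:general_assumptions}(c) and \ref{ass:general_assumptions}(e) (no touching constraints, gradients bounded away from zero on the boundary) and the superlinear vanishing of $h_{x_k}$ do the real work, as already exploited in Lemma~\ref{lem:exact_criticality_measure}. Handling the case distinction $\rho_k < 1$ versus $\rho_k \ge 1$ in the definition of $M_k^c$ (and hence of $X_k$) is a minor nuisance, but since $\rho_k\to 0$ only the $\rho_k<1$ branch matters asymptotically, so I would simply note this and work with $m_k^c$ and its $\kappa_{\lambda_1}$/$\kappa_{\lambda_3}$ envelopes throughout. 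A secondary subtlety is that the $\rho_k$-ball constraint in $\alpha_k$ might be active when the $X_k$ constraint is not (and vice versa), so I should be careful to treat the ball as just another convex constraint in the sandwiching argument rather than assuming it is slack.
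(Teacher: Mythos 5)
Your plan is sound and reaches the same conclusion, but it diverges from the paper's proof in how the two nontrivial error terms are controlled. The paper uses the same skeleton you propose (triangle inequality through intermediate measures $\mathfrak{A}_1$, $\mathfrak{A}_2$, with the sandwich $X_{(x_k,\rho_k)}^{+}\subseteq X_k\subseteq X_{(x_k,\rho_k)}^{-}$), but it handles the set-perturbation term \emph{qualitatively}: Lemma~\ref{lem:upper_lower_semicontinuity} invokes Fiacco-type parametric-programming theorems to get continuity of $\mathfrak{A}_1^{\pm}[x,\rho]$ on the compact set $\mathcal{L}\times[0,\rho_{max}]$, and then uses $X_{(x,0)}^{\pm}=X_x^{ibp}$ to conclude $|\mathfrak{A}_1[x_k]-\mathfrak{A}[x_k]|\le\varepsilon$ eventually. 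You instead propose a \emph{quantitative} $O(\rho_k)$ Lipschitz stability estimate via a uniform Slater ball; this is stronger than what the paper needs and is viable, but the uniform interior radius (uniform over $x_k$, not just nonemptiness of $\mathrm{int}\,X_{x_k}^{ibp}$ pointwise) is exactly the piece you must actually prove from Assumptions~\ref{ass:general_assumptions}(c,e) plus compactness of $\mathcal{L}$ --- you correctly flag this as the main obstacle, and the paper avoids it by retreating to continuity-on-a-compact-set. Your second departure is the step $\mathfrak{A}_2[x_k]\le\alpha_k(\rho_k)$: the paper devotes Lemma~\ref{lem:big_A_and_approx_crit_lemma_relation} to this, with a three-case analysis involving normal cones and angles, whereas your positive-homogeneity idea yields it in one line --- if $\hat s$ is optimal for the unit-ball problem over the convex set $X_k$ containing $x_k$, then $\rho_k\hat s$ is feasible for the $\rho_k$-ball problem, so $\min_{\|d\|\le\rho_k}\langle g_k^f,d\rangle\le\rho_k\langle g_k^f,\hat s\rangle$, giving the inequality directly. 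That is a genuine simplification of the paper's argument. Two small cautions: the $O(\rho_k)$ bound on $\|g_k^f-\nabla f(x_k)\|$ is not automatic for an arbitrary sequence but is supplied by {\tt STEP 1} of Algorithm~\ref{alg:algorithm1} (which the paper invokes explicitly at the start of its proof), and when you ``perturb then scale'' to move a point of $X_k^{ibp}$ into $X_{(x_k,\rho_k)}^{+}$ you should fix the order of those two operations, since the inner-boundary-path term $h_{x_k}$ and the bounding function $b_c$ are not homogeneous in $d$.
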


\begin{proof}
Due to $\lim_{k \rightarrow \infty}\alpha_{k}(\rho_{k}) = 0$ there exists an index $\hat{k}$ such that $\alpha_{k}(\rho_{k}) \leq \varepsilon_c$ for all $k \geq \hat{k}$. In this case {\tt STEP 1} of Algorithm \ref{alg:algorithm1} ensures that the models $m_{k}^f$ and $m_{k}^c$ are fully linear on $B(x_{k}, \rho_{k})$ with $\rho_{k} \leq \mu\alpha_{k}(\rho_{k})^{\frac1{p}}$ for all $k \geq \hat{k}$, in particular yielding $\lim_{ k \rightarrow \infty} \rho_{k} = 0$. Since we are only interested in the asymptotic behavior of the criticality measures, we assume without loss of generality that $\rho_k < 1$.

First, define the intermediate criticality measures
\[
\mathfrak{A}_1[x_k] := \left|\min\limits_{\substack{x_k + s \in X_k \\ \|s\|\leq 1}} \left\langle \nabla f(x_k), s \right\rangle\right|\quad \mbox{and} \quad
\mathfrak{A}_2[x_k] := \left|\min\limits_{\substack{x_k + s \in X_k \\ \|s\|\leq 1}} \left\langle g_k^f, s \right\rangle\right|.
\]
Note that the difference between $\mathfrak{A}_2[x_k]$ and $\rho_k\alpha_k(\rho_k)$ is that the former is constrained by $X_k$ rather than by $X_k \cap B(x_k, \rho_k)$. The difference between $\mathfrak{A}_2[x_k]$ and $\mathfrak{A}_1[x_k]$ is in the gradient of the criticality subproblem, and the difference between $\mathfrak{A}_1[x_k]$ and $\mathfrak{A}[x_k]$ lies in the introduction of the approximated feasible domain $X_k$. In order to prove the assertion of the lemma we use the triangle inequality, 
\[
\mathfrak{A}[x_k] \leq \left|\mathfrak{A}[x_k] - \mathfrak{A}_1[x_k]\right| + 
\left|\mathfrak{A}_1[x_k] - \mathfrak{A}_2[x_k]\right| + \mathfrak{A}_2[x_k],
\]
and show that each term on the right-hand side vanishes for decreasing trust region radius $\rho_k$ and for a vanishing approximated criticality measure $\alpha_k(\rho_k)$. For this we consider the combined sequence $\{(x_k, \rho_k)\}_k$ of intermediate points and trust region radii as computed by Algorithm \ref{alg:algorithm1}. Furthermore we define 
\[
\mathfrak{A}_{1}^{\pm}[x, \rho] := \left|\min\limits_{x + s \in X_{(x,\rho)}^{\pm}} \left\langle \nabla f(x), s \right\rangle\right|,
\]
which, according to Lemma \ref{lem:upper_lower_semicontinuity}, are continuous functions in $(x, \rho)$. Lemma \ref{lem:upper_lower_semicontinuity} establishes the continuity of $\mathfrak{A}_1^{\pm}$ on the whole domain $X \times [0, \rho_{max}]$, which then holds in particular for the path through $\{(x_k, \rho_k)\}_k$ as computed by Algorithm~\ref{alg:algorithm1}, as depicted in Figure~\ref{fig:rho_x_path}. 
\begin{figure}[t]
\psfrag{TagY}[cb][c][.8]{$x$}
\psfrag{TagX}[c][c][.8]{$\rho$}
\psfrag{T1}[c][c][.8]{$(x_0, \rho_0)$}
\psfrag{T2}[c][ct][.8]{$(x_k, \rho_k)$}
\psfrag{T3}[c][ct][.8]{$(x_{k+1}, \rho_{k+1})$}
\begin{center}
\includegraphics[bb= 0 90 560 420, scale=.48]{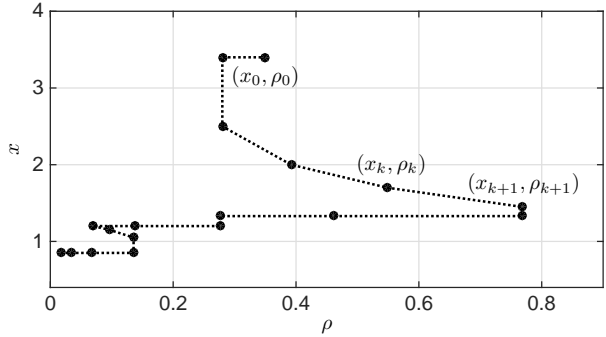}
\end{center}
\caption{Typical path in the domain $X \times [0, \rho_{max}]$ as computed by Algorithm~\ref{alg:algorithm1}.}\label{fig:rho_x_path}
\end{figure}
Note that $X_{(x_k, 0)}^{+} = X_{(x_k, 0)}^{-} = X_k^{ibp}$ for all $x_k \in X$, i.e., $\mathfrak{A}_1^{+}[x_k, \rho_k] = \mathfrak{A}_1^{-}[x_k, \rho_k] = \mathfrak{A}[x_k]$ for $\rho_k = 0$. It then follows from the continuity of $\mathfrak{A}_1^{+}$ and $\mathfrak{A}_1^{-}$ that for every $\varepsilon > 0$ there exists a $k_{\varepsilon}$ such that 
\[
|\mathfrak{A}_{1}^{+}[x_k, \rho_k] - \mathfrak{A}[x_k]| \leq \varepsilon \quad \mbox{and}\quad
|\mathfrak{A}_{1}^{-}[x_k, \rho_k] - \mathfrak{A}[x_k]| \leq \varepsilon
\]
for all $k \geq k_{\varepsilon}$. Due to $X_{(x_k, \rho_k)}^+ \subseteq X_{k} \subseteq X_{(x_k, \rho_k)}^-$ we have\footnote{It holds that $\min_{x_k+s \in X_{(x_k,\rho_k)}^+} \left\langle \nabla f(x_k), s \right\rangle
\geq
\min_{x_k+s \in X_k, \|s\|\leq1} \left\langle \nabla f(x_k), s \right\rangle
\geq 
\min_{x_k+s \in X_{(x_k,\rho_k)}^-} \left\langle \nabla f(x_k), s \right\rangle$. Multiplication with $-1$ yields $\mathfrak{A}_{1}^{+}[x_k, \rho_k] \leq \mathfrak{A}_{1}[x_k, \rho_k] \leq \mathfrak{A}_{1}^{-}[x_k, \rho_k]$.
}
\[
\mathfrak{A}_{1}^{+}[x_k, \rho_k] \leq \mathfrak{A}_{1}[x_k] \leq \mathfrak{A}_{1}^{-}[x_k, \rho_k].
\]
Subtracting $\mathfrak{A}[x_k]$ from this inequality yields
\[
-\varepsilon \leq \mathfrak{A}_{1}^{+}[x_k, \rho_k]-\mathfrak{A}[x_k] \leq \mathfrak{A}_{1}[x_k] -\mathfrak{A}[x_k]\leq \mathfrak{A}_{1}^{-}[x_k, \rho_k]-\mathfrak{A}[x_k] \leq \varepsilon,
\]
and thus
$
|\mathfrak{A}_{1}[x_k] -\mathfrak{A}[x_k]| \leq \varepsilon
$
for $k \geq k_{\varepsilon}$.

Next we derive an upper bound on the difference between the two intermediate criticality measures $\mathfrak{A}_1[x_k]$ and $\mathfrak{A}_2[x_k]$.
Since for $k \geq \hat{k}$ the model $m_k^f$ is fully linear within the trust region $B(x_k, \rho_k)$ and we have that $\|\nabla f(x) - \nabla m_k^f(x)\| \leq \kappa_{df}\rho_k$ for all $x \in B(x_k, \rho_k)$, it holds in particular that $\|\nabla f(x_k) - g_k^f\| \leq \kappa_{df}\rho_k$. We now follow along the lines of \cite[Lem.\ 3.5]{Conn1993} to show that $| \mathfrak{A}_1[x_k] - \mathfrak{A}_2[x_k]| \leq \kappa_{df} \rho_k$. Denote the solutions of the first and second intermediate criticality subproblems, $\mathfrak{A}_1[x_k]$ and $\mathfrak{A}_2[x_k]$, by
\[
s_k := \argmin\limits_{x_k + s \in X_k, \|s\|\leq1} \left\langle \nabla f(x_k), s \right\rangle \quad \mbox{and} \quad
\hat{s}_k := \argmin\limits_{x_k + s \in X_k, \|s\|\leq1} \left\langle g_k^f, s \right\rangle.
\]
Let us first assume the case $\mathfrak{A}_1[x_k] - \mathfrak{A}_2[x_k] \geq 0$. It follows that
\begin{align*}
0 \leq \mathfrak{A}_1[x_k] - \mathfrak{A}_2[x_k] &= \left\langle g_k^f, \hat{s}_k \right\rangle - \left\langle \nabla f(x_k), s_k \right\rangle\\
& = \left\langle g_k^f, \hat{s}_k\right\rangle - \left\langle g_k^f, s_k\right\rangle + \left\langle g_k^f - \nabla f(x_k), s_k\right\rangle\\
& \leq \left\langle g_k^f, \hat{s}_k\right\rangle - \left\langle g_k^f, s_k\right\rangle + \left\| g_k^f - \nabla f(x_k)\right\|\left\| s_k\right\|\\
& \leq \left\langle g_k^f, \hat{s}_k\right\rangle - \left\langle g_k^f, s_k\right\rangle + \kappa_{df}\rho_k,
\end{align*}
where we used the Cauchy-Schwarz inequality, the full linearity property (\ref{eq:fully_linear_df}), and the fact that $\|s_k\| \leq 1$. Noting that $\langle g_k^f, \hat{s}_k\rangle \leq \langle g_k^f, s_k\rangle$ (since $\hat{s}_k$ is a minimum of the intermediate criticality subproblem associated with $\mathfrak{A}_2[x_k]$) yields the bound $\mathfrak{A}_1[x_k] - \mathfrak{A}_2[x_k] \leq \kappa_{df} \rho_k$. The upper bound in the case where $\mathfrak{A}_1[x_k] - \mathfrak{A}_2[x_k] \leq 0$ can be shown analogously by replacing the first line in the above inequality chain with $0 \leq \mathfrak{A}_2[x_k] - \mathfrak{A}_1[x_k] = \langle \nabla f(x_k), s_k\rangle - \langle \nabla f(x_k), \hat{s}_k\rangle + \langle \nabla f(x_k) - g_k^f, \hat{s}_k\rangle$.

In order to complete the proof we refer to Lemma \ref{lem:big_A_and_approx_crit_lemma_relation} where we show that $\mathfrak{A}_2[x_k] \leq \alpha_k(\rho_k)$. Then the assertion of the lemma follows from the assumption that $\lim_{k\rightarrow \infty}\alpha_k(\rho_k) = 0$.
 \end{proof}

\subsection{Successful iterations}\label{sec:successful_steps} From the definition of Algorithm \ref{alg:algorithm1} we see that all intermediate points $x_k$ that are either not feasible or not acceptable are discharged within {\tt STEP 3} or {\tt STEP 4}. In this section, we show that Algorithm \ref{alg:algorithm1} does not get trapped in an infinite loop of discharged iterations that result in premature convergence to a potentially non-critical point. First we show that {\tt STEP 3} in Algorithm~\ref{alg:algorithm1} determines a feasible point if the trust region radius is sufficiently small. Then, we examine the second hurdle for a successful iteration: the step acceptance condition in {\tt STEP 4}.

\medskip

\begin{lemma}\label{lem:successful_step_3}
Let $m_k^c$ be fully linear on $B(x_k, \rho_k)$. {\tt STEP 3} in Algorithm~\ref{alg:algorithm1} yields a feasible trial step if 
\[
\rho_k \leq \left(\frac{\varepsilon_b}{\kappa_c}\mu_2^{\frac{2}{1+p}}\right)^{\frac{1+p}{2(p - q)}}.
\]
\end{lemma}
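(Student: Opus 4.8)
The claim is that when the trust-region radius is small enough, the trial step $s_k$ produced in STEP~2 is automatically feasible for the true constraints, i.e.\ $c(x_k+s_k)\le 0$, so that STEP~3 does not loop. The plan is to exploit the two facts we have about $s_k$: it lies in the approximated feasible domain $X_k$, which means $m_k^c(x_k+s_k)+h_k(x_k+s_k)\le 0$; and it is not too short, $\|s_k\|\ge \min\{\mu_2\rho_k^{1+q},\mu_3\}$, from Assumption~\ref{ass:stepassumptions}(b). I would first restrict to $\rho_k$ small enough that $\mu_2\rho_k^{1+q}\le\mu_3$, so the step-length lower bound reads $\|s_k\|\ge\mu_2\rho_k^{1+q}$, and small enough that $x_k+s_k\in B(x_k,\rho_k)$ so the fully-linear estimate \eqref{eq:fully_linear_c} applies on the whole step.

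The core estimate is a chain of inequalities for each constraint $c_i$:
\[
c_i(x_k+s_k) \;\le\; m_k^{c_i}(x_k+s_k) + \kappa_c\rho_k^2
\;\le\; -h_k(x_k+s_k) + \kappa_c\rho_k^2
\;=\; -\varepsilon_b\|s_k\|^{\frac{2}{1+p}} + \kappa_c\rho_k^2,
\]
where the first step is full linearity \eqref{eq:fully_linear_c}, the second is $x_k+s_k\in X_k\subseteq\{m_k^c+h_k\le 0\}$, and the third is the definition \eqref{eq:definition_of_inner_boundary_path} of the inner boundary path. Now I insert the step-length bound $\|s_k\|\ge\mu_2\rho_k^{1+q}$, using that $t\mapsto t^{\frac{2}{1+p}}$ is increasing, to get
\[
c_i(x_k+s_k) \;\le\; -\varepsilon_b\,\mu_2^{\frac{2}{1+p}}\,\rho_k^{\frac{2(1+q)}{1+p}} + \kappa_c\rho_k^{2}.
\]
Feasibility $c_i(x_k+s_k)\le 0$ then holds precisely when $\varepsilon_b\mu_2^{\frac{2}{1+p}}\rho_k^{\frac{2(1+q)}{1+p}}\ge\kappa_c\rho_k^2$, i.e.\ when $\rho_k^{\,2-\frac{2(1+q)}{1+p}}\le\frac{\varepsilon_b}{\kappa_c}\mu_2^{\frac{2}{1+p}}$. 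The exponent is $2-\frac{2(1+q)}{1+p}=\frac{2(p-q)}{1+p}$, which is strictly positive because $q<p$ (Assumption~\ref{ass:stepassumptions}), so solving for $\rho_k$ gives exactly the stated threshold $\rho_k\le\big(\frac{\varepsilon_b}{\kappa_c}\mu_2^{\frac{2}{1+p}}\big)^{\frac{1+p}{2(p-q)}}$.

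The only subtlety — and the place I would be most careful — is bookkeeping about which regime of the $\min$ in Assumption~\ref{ass:stepassumptions}(b) is active and whether $x_k+s_k$ stays in the ball where \eqref{eq:fully_linear_c} is valid; but since $s_k$ is required to satisfy $x_k+s_k\in X_k\cap B(x_k,\rho_k)$ in Assumption~\ref{ass:stepassumptions}, the step is automatically inside $B(x_k,\rho_k)$, so \eqref{eq:fully_linear_c} applies without further ado, and for $\rho_k$ below the stated bound (which is $\le 1$ after possibly shrinking constants, or can be intersected with the constraint $\mu_2\rho_k^{1+q}\le\mu_3$) the relevant branch of the $\min$ is the $\mu_2\rho_k^{1+q}$ one. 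So the argument is essentially the one-line computation above, and $q<p$ is exactly what makes the power of $\rho_k$ work in our favor as $\rho_k\to 0$.
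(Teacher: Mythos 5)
Your proof is correct and follows essentially the same route as the paper's: combine $m_k^c(x_k+s_k)\le -h_k(x_k+s_k)\le -\varepsilon_b\mu_2^{2/(1+p)}\rho_k^{2(1+q)/(1+p)}$ (from $x_k+s_k\in X_k$ and Assumption~\ref{ass:stepassumptions}(b)) with the fully linear bound \eqref{eq:fully_linear_c} and solve for $\rho_k$. Your extra care about which branch of the $\min$ in Assumption~\ref{ass:stepassumptions}(b) is active is a point the paper glosses over, but it does not change the argument.
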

\begin{proof}
Since $x_k + s_k \in X_k \cap B(x_k, \rho_k)$ we know that $m_k^c(x_k + s_k) + h_k(x_k + s_k) \leq 0$, i.e.,
\begin{equation}\label{eq:bound_on_constraint_model_by_alphaepsb}
m_k^c(x_k + s_k) \leq -h_k(x_k + s_k) = -\varepsilon_b\|s_k\|^{\frac{2}{1+p}} \leq -\varepsilon_b\mu_2^{\frac{2}{1+p}} \rho_k^{2\frac{1 + q}{1+p}},
\end{equation}
where we used Assumption \ref{ass:stepassumptions}(b) in the last inequality. Moreover, from (\ref{eq:bound_on_constraint_model_by_alphaepsb}) and the fully linear property of $m_k^c$, we get
\[
c(x_k + s_k) \leq m_k^c(x_k + s_k) + \kappa_c \rho_k^{2} \leq -\varepsilon_b\mu_2^{\frac{2}{1+p}}\rho_k^{2\frac{1 + q}{1+p}} + \kappa_c \rho_k^{2}.
\]
Thus {\tt STEP 3} in Algorithm~\ref{alg:algorithm1} yields a feasible trial step, i.e., $c(x_k + s_k) \leq 0$, if
\[
-\varepsilon_b\mu_2^{\frac{2}{1+p}}\rho_k^{2\frac{1 + q}{1+p}} + \kappa_c \rho_k^{2} \leq 0.
\]
Solving for $\rho_k$ yields the assertion of the lemma.
\end{proof}

\medskip

The next lemma shows that if the trust region radius falls below a certain threshold  (given by the criticality measure and the threshold in Lemma \ref{lem:successful_step_3}), then the trial step will be accepted. In other words, if the current design is not a critical point, then we can always find a successful trial step.

\medskip

\begin{lemma}\label{lem:successful_step}
If $m_k^f$ and $m_k^c$ are fully linear on $B(x_k, \rho_k)$ and $\rho_k \leq A_k(\alpha_k)$, for
\[
A_k(\alpha_k) := \min\left\{ \frac{\mu_1 \alpha_k(\rho_k) (1-\eta_0)}{2\kappa_{f}}, \, \left(\frac{\varepsilon_b}{\kappa_c}\mu_2^{\frac{2}{1+p}}\right)^{\frac{1+p}{2(p - q)}} \right\},
\]
then iteration $k$ will be acceptable or successful.
\end{lemma}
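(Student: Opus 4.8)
The plan is to combine the two threshold conditions in a standard trust-region acceptance argument. First I would observe that since $\rho_k \leq A_k(\alpha_k)$, in particular $\rho_k$ is bounded above by the second term in the minimum, namely $\left(\frac{\varepsilon_b}{\kappa_c}\mu_2^{\frac{2}{1+p}}\right)^{\frac{1+p}{2(p-q)}}$, so Lemma~\ref{lem:successful_step_3} guarantees that the trial step $s_k$ computed in {\tt STEP~2} passes the feasibility check in {\tt STEP~3}, i.e., $c(x_k + s_k) \leq 0$. Hence the algorithm proceeds to {\tt STEP~4} and computes the ratio $r_k$; it remains to show $r_k \geq \eta_0$, which makes the iteration acceptable or successful.

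For the acceptance ratio, I would bound $|r_k - 1|$ from above. Write
\[
|r_k - 1| = \frac{\left| f(x_k + s_k) - m_k^f(x_k + s_k)\right|}{\left|m_k^f(x_k) - m_k^f(x_k+s_k)\right|},
\]
using that $f(x_k) = m_k^f(x_k)$ (the model interpolates at the center). The numerator is controlled by the fully linear property \eqref{eq:fully_linear_f}: since $x_k + s_k \in B(x_k,\rho_k)$, we have $|f(x_k+s_k) - m_k^f(x_k+s_k)| \leq \kappa_f \rho_k^2$. The denominator is bounded below by Assumption~\ref{ass:stepassumptions}(a): $m_k^f(x_k) - m_k^f(x_k+s_k) \geq \mu_1 \alpha_k(\rho_k)\rho_k$. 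Combining these two bounds gives
\[
|r_k - 1| \leq \frac{\kappa_f \rho_k^2}{\mu_1 \alpha_k(\rho_k)\rho_k} = \frac{\kappa_f \rho_k}{\mu_1 \alpha_k(\rho_k)}.
\]
Now invoke the first term in the minimum defining $A_k(\alpha_k)$: since $\rho_k \leq \frac{\mu_1 \alpha_k(\rho_k)(1-\eta_0)}{2\kappa_f}$, the right-hand side is at most $\frac{1-\eta_0}{2} \leq 1 - \eta_0$ (as $\eta_0 < 1$), whence $r_k \geq 1 - (1-\eta_0) = \eta_0$. This is precisely the condition for iteration $k$ to be acceptable (if $\eta_0 \leq r_k < \eta_1$) or successful (if $r_k \geq \eta_1$), which proves the claim.

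I do not anticipate a serious obstacle here — the argument is the classical fraction-of-Cauchy-decrease / fully-linear-model acceptance lemma, with the only twist being the preliminary appeal to Lemma~\ref{lem:successful_step_3} to clear the feasibility hurdle in {\tt STEP~3} that is special to NOWPAC. The one point requiring a moment's care is making sure $\alpha_k(\rho_k) > 0$ so that the ratio bounds are well-defined and the first threshold in $A_k$ is meaningful; implicitly the lemma is applied in the regime where $x_k$ is not yet critical, so $\alpha_k(\rho_k) > 0$, and I would note this explicitly (otherwise the statement is vacuous since $A_k(\alpha_k) = 0$). A second minor check is that the feasibility of $s_k$ in {\tt STEP~3} really does send the flow to {\tt STEP~4} rather than looping — this is immediate from the structure of Algorithm~\ref{alg:algorithm1}.
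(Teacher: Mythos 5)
Your proposal is correct and follows essentially the same route as the paper: feasibility of the trial step via Lemma~\ref{lem:successful_step_3} using the second term of $A_k$, then a bound on $|r_k-1|$ combining the fully linear property \eqref{eq:fully_linear_f} with the sufficient-decrease condition of Assumption~\ref{ass:stepassumptions}(a), and finally the first term of $A_k$ to conclude $r_k\geq\eta_0$. The one deviation is your use of the identity $f(x_k)=m_k^f(x_k)$ to collapse the error to a single term: interpolation at the center point holds for the minimum-Frobenius-norm models actually used, but it is not implied by the fully linear property, which is all the lemma assumes. The paper instead splits $|r_k-1|$ by the triangle inequality into the two error terms $|f(x_k+s_k)-m_k^f(x_k+s_k)|$ and $|f(x_k)-m_k^f(x_k)|$, each bounded by $\kappa_f\rho_k^2$, giving $2\kappa_f\rho_k^2$ in the numerator --- which is precisely why the factor $2$ sits in the denominator of the first term of $A_k(\alpha_k)$. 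Under your interpolation assumption that factor is spare (you land at $(1-\eta_0)/2$); without it, the paper's decomposition consumes exactly that slack and still yields $|r_k-1|\leq 1-\eta_0$. So your argument is sound as written and the general case is recovered by the same computation with the extra term; your remark that the lemma is vacuous unless $\alpha_k(\rho_k)>0$ is a fair observation the paper leaves implicit.
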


\begin{proof}
Since the assumptions of Lemma \ref{lem:successful_step_3} are satisfied, Algorithm~\ref{alg:algorithm1} passes {\tt STEP 3} with a feasible trial step. Moreover, from Assumption \ref{ass:stepassumptions}(a) we know that
\[
m_k^f(x_k) - m_k^f(x_k + s_k) \geq \mu_1 \alpha_k(\rho_k) \rho_k.
\]
Using the fully linear properties of the model $m_k^f$ on $B(x_k, \rho_k)$ we get
\begin{align*}
\left| r_k - 1\right| &\leq
\left| \frac{f(x_k + s_k) - m_k^f(x_k + s_k)}{m_k^f(x_k) - m_k^f(x_k + s_k)}\right| +
\left| \frac{f(x_k) - m_k^f(x_k)}{m_k^f(x_k) - m_k^f(x_k + s_k)}\right|\\
&\leq \frac{2\kappa_{f} \rho_k^{2}}{\mu_1 \alpha_k(\rho_k) \rho_k}
\leq 1-\eta_0.
\end{align*}
Therefore $r_k \geq \eta_0$ and iteration $k$ is acceptable or successful.
\end{proof}

\subsection{Proof of convergence}\label{subsec:conv_to_critical_point}
Having ensured that Algorithm~\ref{alg:algorithm1} always finds an acceptable or successful feasible trial step, we now show convergence of the intermediate points $\{x_k\}_k$ to a first-order critical point $x^\ast$. Following the ideas in \cite{Conn1993, Conn2009a} we establish a relation between the trust region radii $\{\rho_k\}_k$ and the criticality measures $\{\mathfrak{A}[x_k]\}_k$; we reason that $\lim_{k \rightarrow \infty} \rho_k = 0$, from which we eventually conclude $\lim_{k\rightarrow \infty} \mathfrak{A}[x_k] = 0$. We start by proving the technical auxiliary Lemma \ref{lem:bounded_crit_measure_means_bounded_tr_radius} where we show that if the approximated criticality measures $\{\alpha_k\}_k$ are bounded from below by a positive constant, then the sequence $\{\rho_k\}_k$ of trust region radii will also be bounded from below by a positive constant, cf.\ \cite[Lem.\ 10.7]{Conn2009a}. 

\medskip

\begin{lemma}\label{lem:bounded_crit_measure_means_bounded_tr_radius}
Suppose that there exists a constant $\kappa_1 > 0$ such that $\alpha_k(\rho_k) \geq \kappa_1$ for all $k$. Then there exists a constant $\kappa_2 > 0$ such that $\rho_k \geq \kappa_2$ for all $k$.
\end{lemma}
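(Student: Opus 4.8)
The plan is to follow the classical trust-region argument (as in \cite[Lem.\ 10.7]{Conn2009a}) adapted to the path-augmented setting. The key observation is that the trust region radius can only shrink in two places: in the criticality step ({\tt STEP 1}), and when a trial step is infeasible ({\tt STEP 3}) or the acceptance ratio $r_k$ falls below $\eta_0$ ({\tt STEP 5}/{\tt STEP 6}). I would argue that under the hypothesis $\alpha_k(\rho_k)\geq\kappa_1$, none of these shrinking mechanisms can push $\rho_k$ below an explicit positive threshold, because below that threshold Lemma~\ref{lem:successful_step} guarantees an acceptable or successful iteration, and Lemma~\ref{lem:successful_step_3} guarantees feasibility, so {\tt STEP 3} and the $r_k<\eta_0$ branch are never triggered. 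Likewise, in {\tt STEP 1}, whenever $\rho_k$ gets small enough the models become fully linear and the condition $\rho_k>\mu\alpha_k(\rho_k)^{1/q}$ fails (since $\alpha_k\geq\kappa_1$ bounds $\mu\alpha_k^{1/q}$ from below by a positive constant), so the criticality loop also stops shrinking $\rho_k$.

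Concretely, first I would define the threshold
\[
\kappa_2 := \gamma\,\min\left\{\rho_0,\ \mu\kappa_1^{1/q},\ \omega\,\mu\kappa_1^{1/q},\ \frac{\mu_1\kappa_1(1-\eta_0)}{2\kappa_f},\ \left(\frac{\varepsilon_b}{\kappa_c}\mu_2^{\frac{2}{1+p}}\right)^{\frac{1+p}{2(p-q)}}\right\}
\]
(with the exact bookkeeping of the $\omega$ and $\gamma$ factors to be pinned down during the write-up), and then prove by induction on $k$ that $\rho_k\geq\kappa_2$. The base case is immediate since $\rho_0>\rho_{\min}>0$; more to the point, we may absorb $\rho_0$ into the minimum. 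For the inductive step, I would do a case analysis on the iteration. If $\rho_k\geq\kappa_2/\gamma$, then even the worst-case update $\rho_{k+1}=\gamma\rho_k$ (in {\tt STEP 5}) leaves $\rho_{k+1}\geq\kappa_2$, and the criticality step cannot have reduced it below this either — here I use that if $\rho_k$ entered {\tt STEP 1} with a value $\leq\mu\kappa_1^{1/q}\leq\mu\alpha_k(\rho_k)^{1/q}$ and fully linear models, the loop exits without shrinking. If instead $\rho_k<\kappa_2/\gamma$, then in particular $\rho_k\leq A_k(\alpha_k)$ by the definition of $A_k$ in Lemma~\ref{lem:successful_step} and the bound $\alpha_k\geq\kappa_1$; hence by Lemmas~\ref{lem:successful_step_3} and~\ref{lem:successful_step} the step is feasible and $r_k\geq\eta_0$, so {\tt STEP 5} gives $\rho_{k+1}\in\{\gamma_{inc}\rho_k,\rho_k\}$, i.e.\ $\rho_{k+1}\geq\rho_k\geq\kappa_2$ (using $\gamma_{inc}>1$).

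The main subtlety I anticipate is the interaction with {\tt STEP 1}: the criticality loop can reduce $\rho_k$ by a factor $\omega$ possibly several times within a single iteration $k$ before {\tt STEP 2} is reached, so one has to verify carefully that it cannot drive $\rho_k$ arbitrarily small. The resolution is that the loop terminates as soon as the models are fully linear \emph{and} $\rho_k\leq\mu\alpha_k(\rho_k)^{1/q}$; since model improvement terminates finitely (as noted after Algorithm~\ref{alg:algorithm1}) and $\mu\alpha_k(\rho_k)^{1/q}\geq\mu\kappa_1^{1/q}>0$, the last value of $\rho_k$ after the loop is at least $\omega\mu\kappa_1^{1/q}$ — the factor $\omega$ accounting for at most one ``overshoot'' reduction past the threshold. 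Including this $\omega\mu\kappa_1^{1/q}$ term in the minimum defining $\kappa_2$ closes the gap. A secondary point to be careful about is that the trial step used in {\tt STEP 3} is computed with the (possibly just-improved) fully linear models on $B(x_k,\rho_k)$, so the hypotheses of Lemmas~\ref{lem:successful_step_3} and~\ref{lem:successful_step} are genuinely met at the value of $\rho_k$ in force at that point; I would make this explicit to avoid circularity.
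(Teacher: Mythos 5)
Your proposal is correct and follows essentially the same route as the paper: invoke Lemma~\ref{lem:successful_step} to show that once $\rho_k$ drops below $A_k(\kappa_1)$ the iteration is acceptable or successful (hence $\rho_{k+1}\geq\rho_k$), and then account for the at-most-one overshoot reductions by $\omega$ in {\tt STEP 1} and by $\gamma$ in {\tt STEPS 3} and {\tt 5} to obtain a lower bound of the form $\min\{\mu\omega\kappa_1^{1/q},\gamma A_k(\kappa_1)\}$. Your explicit induction and the leading factor of $\gamma$ applied to every term just make the bookkeeping slightly more conservative than the paper's stated constant, but the argument is the same.
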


\begin{proof}
By Lemma \ref{lem:successful_step} (note that {\tt STEP 1}, {\tt STEP 3,} and {\tt STEP 6} in Algorithm~\ref{alg:algorithm1} ensure full linearity of the models $m_k^f$ and $m_k^c$ after every reduction of the trust region radius) it holds that whenever $\rho_k$ falls below the value 
\begin{equation}\label{eq:in_proof_lem_bounded_crit_measure_means_bounded_tr_radius1}
\bar{\kappa}_2 = A_k(\kappa_1),
\end{equation}
the $k$th iteration is either acceptable or successful, and hence it holds that $\rho_{k+1} \geq \rho_k$. We conclude from  (\ref{eq:in_proof_lem_bounded_crit_measure_means_bounded_tr_radius1}) and the rules of {\tt STEPS 1},  {\tt 3}, and {\tt 5} that 
$\rho_k \geq \min\{\mu\omega\kappa_1^{\frac1{q}}, \gamma\bar{\kappa}_2\} =: \kappa_2$.
\end{proof}

\medskip

For notational convenience we denote the set of indices of all acceptable or successful steps by $\mathcal{S}$. In the next lemma we show convergence of Algorithm~\ref{alg:algorithm1} to a first-order critical point if $|\mathcal{S}| < \infty$ (i.e., if there are only finitely many acceptable or successful steps).

\medskip

\begin{lemma}\label{lem:convergence_finite_number_of_successful_steps}
If $|\mathcal{S}| < \infty$, then $\lim\limits_{k \rightarrow \infty} \mathfrak{A}[x_k] = 0$.
\end{lemma}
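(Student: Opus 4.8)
The plan is to show that if only finitely many steps are acceptable or successful, then the trust region radius must shrink to zero, and then to invoke the criticality-step machinery (STEP~1) together with Lemma~\ref{lem:quality_of_approx_criticality_measure} to conclude that the criticality measure vanishes. First I would observe that since $|\mathcal{S}| < \infty$, there is an index $k_0$ such that for all $k \geq k_0$ the iteration $k$ is neither acceptable nor successful, i.e.\ $r_k < \eta_0$ (or the trial point is infeasible and rejected in STEP~3). In either case the trust region radius is multiplied by a factor strictly less than one: by $\gamma \in\;]0,1[$ in STEP~5 when $r_k < \eta_0$, by $\gamma$ again in STEP~3 when the trial point is infeasible, and by $\omega \in\;]0,1[$ in the criticality STEP~1. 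Since no branch for $k \geq k_0$ ever increases $\rho_k$ (the increase $\gamma_{inc}\rho_k$ only occurs on successful steps, of which there are none past $k_0$), and every iteration applies at least one contraction factor bounded away from $1$, we get $\rho_k \leq (\max\{\gamma,\omega\})^{\,k - k_0}\,\rho_{k_0} \to 0$ as $k \to \infty$. In particular $\lim_{k\to\infty}\rho_k = 0$, and note also that $x_k = x_{k_0}$ for all $k \geq k_0$ since no step is accepted, so $\{x_k\}_k$ is (eventually) constant and trivially convergent.

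Next I would rule out the possibility that $\alpha_k(\rho_k)$ stays bounded away from zero. Suppose, for contradiction, that $\alpha_k(\rho_k) \geq \kappa_1 > 0$ for all $k$ (it suffices to have this for all large $k$, since we may shrink $\kappa_1$ to accommodate the finitely many early indices). Then Lemma~\ref{lem:bounded_crit_measure_means_bounded_tr_radius} gives a constant $\kappa_2 > 0$ with $\rho_k \geq \kappa_2$ for all $k$, contradicting $\rho_k \to 0$ just established. Hence $\liminf_{k\to\infty}\alpha_k(\rho_k) = 0$. To upgrade the liminf to a genuine limit, I would use the structure of STEP~1: once $\rho_k$ is small enough that the acceptance condition of Lemma~\ref{lem:successful_step} would be met for any $\alpha_k$ bounded below, the only way the iteration can fail to be successful is through the criticality step forcing $\rho_k \leq \mu\,\alpha_k(\rho_k)^{1/q}$, which couples the smallness of $\rho_k$ to the smallness of $\alpha_k$. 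More directly: for $k \geq k_0$ the iteration is not successful, so by the contrapositive of Lemma~\ref{lem:successful_step} we must have $\rho_k > A_k(\alpha_k(\rho_k))$, i.e.
\[
\rho_k > \min\left\{ \frac{\mu_1 \alpha_k(\rho_k)(1-\eta_0)}{2\kappa_f}, \ \left(\frac{\varepsilon_b}{\kappa_c}\mu_2^{\frac{2}{1+p}}\right)^{\frac{1+p}{2(p-q)}} \right\}.
\]
Since $\rho_k \to 0$, the second argument of the minimum (a fixed positive constant) is eventually the larger one, so for $k$ large the minimum is realized by the first term, giving $\rho_k > \frac{\mu_1(1-\eta_0)}{2\kappa_f}\,\alpha_k(\rho_k)$, and therefore $\alpha_k(\rho_k) < \frac{2\kappa_f}{\mu_1(1-\eta_0)}\,\rho_k \to 0$. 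Thus $\lim_{k\to\infty}\alpha_k(\rho_k) = 0$.

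Finally, having $\lim_{k\to\infty}\alpha_k(\rho_k) = 0$ and $\lim_{k\to\infty}\rho_k = 0$ for the sequence $\{(x_k,\rho_k)\}_k$ generated by Algorithm~\ref{alg:algorithm1}, I would apply Lemma~\ref{lem:quality_of_approx_criticality_measure} directly to conclude $\lim_{k\to\infty}\mathfrak{A}[x_k] = 0$, which is the assertion. The main obstacle is the step just above—making sure that failure of every iteration to be successful genuinely forces $\alpha_k(\rho_k)$ (and not merely $\rho_k$) to zero; the cleanest route is the contrapositive of Lemma~\ref{lem:successful_step} combined with the fact that $\rho_k\to 0$ eventually dominates the fixed feasibility threshold in $A_k$, which I sketched above. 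One should also double-check the bookkeeping that full linearity of $m_k^f$ and $m_k^c$ holds whenever Lemma~\ref{lem:successful_step} is invoked—this is guaranteed by STEPS~1, 3, and 6, exactly as noted in the proof of Lemma~\ref{lem:bounded_crit_measure_means_bounded_tr_radius}.
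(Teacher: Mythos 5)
Your proposal is correct and follows essentially the same route as the paper: show that $\rho_k \to 0$ once no further steps are accepted, deduce $\alpha_k(\rho_k)\to 0$ from Lemma~\ref{lem:successful_step} (the paper phrases this as a contradiction, you as the contrapositive plus the observation that the fixed feasibility threshold in $A_k$ eventually dominates), and conclude via Lemma~\ref{lem:quality_of_approx_criticality_measure}. Your explicit upgrade from $\liminf\alpha_k=0$ to $\lim\alpha_k=0$ spells out a detail the paper's proof states only tersely, but it is the same argument.
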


\begin{proof}
First we show that $\lim_{k \rightarrow \infty}\rho_k = 0$ if $|\mathcal{S}| < \infty$. For this we note that {\tt STEP 6} in  Algorithm \ref{alg:algorithm1} ensures full linearity of the models for the objective function and the constraints within every iteration after the last acceptable or successful iteration. Therefore, after the last acceptable or successful step, Algorithm \ref{alg:algorithm1} never increases the trust region radius $\rho_k$ but reduces it either by a factor of some power of $\omega$ in {\tt STEP 1} or by a factor of $\gamma$ in {\tt STEP 3} or {\tt STEP 5}. It follows that $\lim_{k \rightarrow \infty}\rho_k = 0$.
This in turn implies that $\lim_{k\rightarrow \infty} \alpha_k(\rho_k) = 0$; if the approximate criticality measures were bounded away from zero, then Lemma \ref{lem:successful_step}, for small $\rho_{k+1}$, guarantees that step $k+1$ is either acceptable or successful, yielding a contradiction to $|\mathcal{S}| < \infty$. The assertion of this lemma now follows from Lemma \ref{lem:quality_of_approx_criticality_measure}.
\end{proof}

\medskip

Thus far we have proved the convergence of Algorithm \ref{alg:algorithm1} in the case of $|\mathcal{S}| < \infty$. In the remainder of this section we extend the proof of convergence to $\mathcal{S}$ being a countably infinite set. To this end, we first show that the sequence of trust region radii $\{\rho_k\}$ converges to zero even if $\mathcal{S}$ is infinite in Lemma \ref{lem:tr_region_radius_converges_to_zero}. This immediately implies the existence of a subsequence of approximated criticality measures $\{\alpha_k(\rho_k)\}_k$ that converges to zero. Finally we combine all results to prove the convergence of Algorithm \ref{alg:algorithm1} towards a first-order critical point in Theorem \ref{thm:overall_convergence_of_exact_criticality meausre}.

\medskip

\begin{lemma}\label{lem:tr_region_radius_converges_to_zero}
It holds that $\lim\limits_{k \rightarrow \infty} \rho_k = 0$.
\end{lemma}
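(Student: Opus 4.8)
The plan is to prove $\lim_{k\to\infty}\rho_k = 0$ by contradiction, combining the dichotomy between finitely and infinitely many acceptable/successful steps. By Lemma~\ref{lem:convergence_finite_number_of_successful_steps} we already know that $\lim_{k\to\infty}\rho_k = 0$ when $|\mathcal{S}| < \infty$ (this is shown at the start of that lemma's proof), so it suffices to treat the case $|\mathcal{S}| = \infty$. Assume for contradiction that $\rho_k$ does not converge to zero. Since $\rho_k$ is only increased on successful steps (by the factor $\gamma_{inc}$) and otherwise decreased, the classical argument is to consider the total change in the objective along the successful iterations.

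First I would establish that the objective function $f$ is bounded below on $\mathcal{L}$: by Assumption~\ref{ass:general_assumptions}(a), $\mathcal{L}$ is compact and $f$ is continuous, and every iterate $x_k$ lies in $\mathcal{L}$ since $f(x_k)$ is non-increasing and each $x_k \in X$. For each $k \in \mathcal{S}$ (an acceptable or successful step), the acceptance test gives $r_k \geq \eta_0$, hence
\[
f(x_k) - f(x_{k+1}) \;=\; f(x_k) - f(x_k + s_k) \;\geq\; \eta_0\bigl(m_k^f(x_k) - m_k^f(x_k+s_k)\bigr) \;\geq\; \eta_0\,\mu_1\,\alpha_k(\rho_k)\,\rho_k,
\]
using Assumption~\ref{ass:stepassumptions}(a); on non-acceptable steps $x_{k+1} = x_k$, so $f(x_k) - f(x_{k+1}) \geq 0$ for all $k$. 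Summing over all $k$ and using the lower bound on $f$, the series $\sum_{k\in\mathcal{S}} \alpha_k(\rho_k)\,\rho_k$ converges, so in particular $\alpha_k(\rho_k)\,\rho_k \to 0$ along $\mathcal{S}$.

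Next I would show that in fact $\rho_k \to 0$ along the whole sequence. The key observation is that between two consecutive elements of $\mathcal{S}$, the radius can only shrink (STEP~1, STEP~3, STEP~5 all reduce $\rho_k$ by powers of $\omega$ or by $\gamma$ on non-acceptable iterations), and on the acceptable-or-successful steps $\rho_{k+1} \leq \gamma_{inc}\rho_k$. Hence $\limsup_k \rho_k = \limsup_{k\in\mathcal{S}} \rho_k$, and it suffices to drive the subsequence $\{\rho_k\}_{k\in\mathcal{S}}$ to zero. Suppose not: then there is a subsequence $\mathcal{S}' \subseteq \mathcal{S}$ with $\rho_k \geq \delta > 0$ for $k\in\mathcal{S}'$. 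Since $\alpha_k(\rho_k)\rho_k \to 0$ along $\mathcal{S}$, this forces $\alpha_k(\rho_k) \to 0$ along $\mathcal{S}'$. But once $\alpha_k(\rho_k) \leq \varepsilon_c$, the criticality step STEP~1 is triggered and, because STEP~1 loops while $\rho_k > \mu\,\alpha_k(\rho_k)^{1/q}$, it forces $\rho_k \leq \mu\,\alpha_k(\rho_k)^{1/q} \to 0$, contradicting $\rho_k \geq \delta$. Therefore $\rho_k \to 0$ along $\mathcal{S}$, and hence along the whole sequence.

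The main obstacle is the bookkeeping in the last step: one must carefully track that the radius cannot jump back up between successful iterations and that the criticality step genuinely enforces $\rho_k \lesssim \alpha_k(\rho_k)^{1/q}$ whenever it is active. A subtle point is ensuring the criticality step actually fires: this needs $\alpha_k(\rho_k) \leq \varepsilon_c$, which is exactly what the hypothesis $\alpha_k(\rho_k)\to 0$ along $\mathcal{S}'$ provides for large $k$. One should also confirm that the models are kept fully linear after each radius reduction (guaranteed by STEP~1, STEP~3, STEP~6), so that Lemma~\ref{lem:successful_step} could be invoked if needed; here, however, the contradiction comes directly from the criticality-step loop rather than from Lemma~\ref{lem:successful_step}, which streamlines the argument.
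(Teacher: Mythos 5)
Your proof is correct and follows essentially the same route as the paper's: the dichotomy on $|\mathcal{S}|$, the telescoping descent bound $f(x_k)-f(x_{k+1}) \geq \eta_0\,\mu_1\,\alpha_k(\rho_k)\,\rho_k$ over acceptable/successful steps, and the \texttt{STEP 1} invariant $\alpha_k(\rho_k) \geq \min\{\varepsilon_c,\, \mu^{-q}\rho_k^{q}\}$, which you invoke by contradiction whereas the paper substitutes it directly into the descent bound. Your explicit bridge from $\rho_k \to 0$ along $\mathcal{S}$ to the whole sequence is a detail the paper leaves implicit, but the core argument is the same.
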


\begin{proof}
The proof follows closely along the lines of \cite[Lem.\ 10.9]{Conn2009a}. First, note that if $|\mathcal{S}| < \infty$, the assertion follows from the first part of the proof of Lemma \ref{lem:convergence_finite_number_of_successful_steps}. So henceforth we assume that $\mathcal{S}$ is a countably infinite set. For every $k \in \mathcal{S}$ we have
\[
f(x_k) - f(x_{k+1}) \geq \eta_0\left(m_k^f(x_k) - m_k^f(x_{k+1})\right) 
 \geq \eta_0 \mu_1 \alpha_k(\rho_k)\rho_k,
\]
where we used Assumption \ref{ass:stepassumptions}(a) in the second inequality. Due to {\tt STEP 1} in Algorithm~\ref{alg:algorithm1} we have $\alpha_k(\rho_k) \geq \min\{\varepsilon_c,\, \mu^{-q}\rho_k^{q}\}$, yielding
\begin{equation}\label{eq:rhs_rho_goes_to_zero_in_case_of_infinitely_many_SA_steps}
f(x_k) - f(x_{k+1}) \geq \eta_0 \mu_1 \min\{\varepsilon_c,\, \mu^{-q}\rho_k^{q}\}\rho_k.
\end{equation}
Since $\mathcal{S}$ is countably infinite and the objective function $f$ is bounded from below within the feasible set $X$, the right-hand side of (\ref{eq:rhs_rho_goes_to_zero_in_case_of_infinitely_many_SA_steps}), i.e., the trust region radius $\rho_k$, has to converge to zero.
\end{proof}

\medskip

Lemma \ref{lem:tr_region_radius_converges_to_zero} shows that using the stopping criterion $\rho_{k+1} < \rho_{min}$ is reasonable and results in termination of NOWPAC after a finite number of steps. Another direct consequence of Lemma \ref{lem:tr_region_radius_converges_to_zero} is that
\begin{equation}\label{lem:iminf_of_alphak_goes_to_zero}
\liminf\limits_{k \rightarrow \infty} \alpha_k(\rho_k) = 0,
\end{equation}
since $\alpha_k(\rho_k) \geq \kappa_1$ for some $\kappa_1 > 0$ for all $k$ implies $\rho_k \geq \kappa_2$ for all $k$. The following theorem shows that the convergence of a subsequence of the approximated criticality measures $\{\alpha_{k_i}\}_{k_i}$ is carried over to the overall convergence of the exact criticality measures to zero.

\medskip

\begin{theorem}\label{thm:overall_convergence_of_exact_criticality meausre}
It holds that
\[
\lim\limits_{k \rightarrow \infty} \mathfrak{A}[x_k] = 0.
\]
\end{theorem}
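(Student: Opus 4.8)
The goal is to upgrade the subsequential convergence $\liminf_{k\to\infty}\alpha_k(\rho_k)=0$ from \eqref{lem:iminf_of_alphak_goes_to_zero} to the full-sequence convergence $\lim_{k\to\infty}\mathfrak{A}[x_k]=0$. The natural strategy, following \cite{Conn1993, Conn2009a}, is a contradiction argument combined with a ``telescoping/summability'' estimate on the objective decrease. Suppose for contradiction that $\mathfrak{A}[x_k]$ does \emph{not} converge to zero. Then, by Lemma~\ref{lem:quality_of_approx_criticality_measure} (contrapositive, using $\rho_k\to 0$ from Lemma~\ref{lem:tr_region_radius_converges_to_zero}), the approximated criticality measure $\alpha_k(\rho_k)$ also fails to converge to zero, so there is an $\varepsilon_0>0$ and an infinite index set on which $\alpha_k(\rho_k)\geq 2\varepsilon_0$, say. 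On the other hand, \eqref{lem:iminf_of_alphak_goes_to_zero} gives an infinite index set on which $\alpha_k(\rho_k)<\varepsilon_0$. So the sequence oscillates between the two levels infinitely often.

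The core of the argument is then a standard ``ping-pong'' construction: pick indices $\ell_i < u_i$ such that $\alpha_{\ell_i}(\rho_{\ell_i})\geq 2\varepsilon_0$, $\alpha_{u_i}(\rho_{u_i})<\varepsilon_0$, and $\alpha_k(\rho_k)\geq \varepsilon_0$ for all $\ell_i\leq k < u_i$. On each such block $[\ell_i,u_i)$ I would sum the guaranteed objective decreases. For every acceptable or successful step $k$ in the block, Assumption~\ref{ass:stepassumptions}(a) and the acceptance test give $f(x_k)-f(x_{k+1})\geq \eta_0\mu_1\alpha_k(\rho_k)\rho_k\geq \eta_0\mu_1\varepsilon_0\rho_k$; for unsuccessful/infeasible steps $f$ does not change but $\rho_k$ shrinks by a fixed factor ($\gamma$ or a power of $\omega$). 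Since $f$ is bounded below on $\mathcal{L}$ and is monotone nonincreasing along the iterates, $\sum_{k\in\mathcal{S}}\alpha_k(\rho_k)\rho_k<\infty$, hence in particular $\sum_{k\in\mathcal{S}\cap[\ell_i,u_i)}\rho_k\to 0$ as $i\to\infty$. Combining the shrinkage of $\rho_k$ on the non-successful steps with this summability, one bounds the total displacement $\|x_{\ell_i}-x_{u_i}\|\leq\sum_{k\in\mathcal{S}\cap[\ell_i,u_i)}\|s_k\|\leq\sum_{k\in\mathcal{S}\cap[\ell_i,u_i)}\rho_k\to 0$, so $\|x_{\ell_i}-x_{u_i}\|\to 0$.

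Finally I would derive the contradiction from continuity. Since $\rho_k\to 0$ and $\alpha_{\ell_i}(\rho_{\ell_i})\geq 2\varepsilon_0 > \varepsilon_c$ for large $i$ forces {\tt STEP~1} to have made $m_{\ell_i}^f,m_{\ell_i}^c$ fully linear with $\rho_{\ell_i}\leq\mu\alpha_{\ell_i}(\rho_{\ell_i})^{1/q}$ is not available here, so instead I use that $\rho_{\ell_i}\to 0$ directly and invoke the argument inside the proof of Lemma~\ref{lem:quality_of_approx_criticality_measure}: for $\rho_k\to 0$ we have $|\mathfrak{A}[x_k]-\mathfrak{A}_2[x_k]|\to 0$ and $\mathfrak{A}_2[x_k]\leq\alpha_k(\rho_k)$, while a matching lower bound (also in that proof) gives $\alpha_k(\rho_k)\leq \mathfrak{A}_2[x_k] + o(1)\leq \mathfrak{A}_1^{-}[x_k,\rho_k]+o(1)$, and the $\mathfrak{A}_1^{\pm}$ are continuous in $(x,\rho)$ by Lemma~\ref{lem:upper_lower_semicontinuity}, coinciding with $\mathfrak{A}$ at $\rho=0$. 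Thus $\alpha_{\ell_i}(\rho_{\ell_i})$ and $\alpha_{u_i}(\rho_{u_i})$ are both within $o(1)$ of $\mathfrak{A}[x_{\ell_i}]$ and $\mathfrak{A}[x_{u_i}]$ respectively; since $\mathfrak{A}$ is continuous on the compact set $\mathcal{L}$ and $\|x_{\ell_i}-x_{u_i}\|\to 0$, we get $|\alpha_{\ell_i}(\rho_{\ell_i})-\alpha_{u_i}(\rho_{u_i})|\to 0$, contradicting $\alpha_{\ell_i}(\rho_{\ell_i})-\alpha_{u_i}(\rho_{u_i})\geq\varepsilon_0$.

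The main obstacle I anticipate is the bookkeeping in the middle step: carefully controlling the total step length $\sum\|s_k\|$ over a block when the block may contain many consecutive unsuccessful iterations (on which $x_k$ does not move but $\rho_k$ contracts geometrically). One must argue that the $\rho_k$ appearing in the summable series $\sum_{\mathcal{S}}\rho_k\rho_k$-type bound, together with the geometric contraction on the intervening unsuccessful steps, still dominates the full sum of step lengths $\sum\|s_k\|$ across the block — the geometric-series tail after each successful step is what makes this work, exactly as in \cite[Lem.~10.10]{Conn2009a}. A secondary subtlety is making the ``$o(1)$'' relating $\alpha_k(\rho_k)$ to $\mathfrak{A}[x_k]$ uniform enough along the relevant subsequences, which is handled by the joint continuity of $\mathfrak{A}_1^{\pm}$ on $X\times[0,\rho_{\max}]$ from Lemma~\ref{lem:upper_lower_semicontinuity}.
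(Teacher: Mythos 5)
Your overall strategy coincides with the paper's: assume $\mathfrak{A}[x_k]\not\to 0$, build the ``ping-pong'' index pairs (your $\ell_i<u_i$ are the paper's $k_i<l_i$), use the summability of the guaranteed decreases $f(x_k)-f(x_{k+1})\geq\eta_0\mu_1\varepsilon\rho_k$ over acceptable/successful steps to get $\|x_{\ell_i}-x_{u_i}\|\to 0$, and close with continuity of the criticality measures. Your worry about bookkeeping over blocks with many unsuccessful iterations is a non-issue here: unsuccessful and infeasible iterations do not move the iterate at all, so the displacement is bounded by $\sum_{k\in\mathcal{S}\cap[\ell_i,u_i)}\rho_k$ exactly as you write, and no geometric-tail argument \`a la \cite[Lem.~10.10]{Conn2009a} is needed.

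There is, however, a genuine gap in your closing step. You invoke a ``matching lower bound'' $\alpha_k(\rho_k)\leq\mathfrak{A}_2[x_k]+o(1)$ to conclude that $\alpha_{\ell_i}(\rho_{\ell_i})$ is within $o(1)$ of $\mathfrak{A}[x_{\ell_i}]$, and then derive the contradiction at the level of the $\alpha$'s. No such bound is established anywhere in the paper, and it is false in general: Lemma~\ref{lem:big_A_and_approx_crit_lemma_relation} gives only the one-sided inequality $\mathfrak{A}_2[x_k]\leq\alpha_k(\rho_k)$. The reverse direction fails, e.g., whenever the constraints are inactive on $B(x_k,\rho_k)$ but active on $B(x_k,1)$: then $\alpha_k(\rho_k)=\|g_k^f\|$ while $\mathfrak{A}_2[x_k]$ (and $\mathfrak{A}[x_k]$) can be strictly and non-vanishingly smaller. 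Consequently, ``$\alpha_{\ell_i}$ large'' does not imply ``$\mathfrak{A}[x_{\ell_i}]$ large,'' and your chain $|\alpha_{\ell_i}-\alpha_{u_i}|\to 0$ does not follow. The repair is to keep the contradiction anchored to $\mathfrak{A}$ rather than $\alpha$: choose the $\ell_i$ (as the paper chooses its $k_i$) from the subsequence witnessing $\mathfrak{A}[x_{\ell_i}]\geq\varepsilon_0$, and use the one-sided chain only at the ``small'' indices, namely $\mathfrak{A}[x_{u_i}]\leq|\mathfrak{A}[x_{u_i}]-\mathfrak{A}_1[x_{u_i}]|+|\mathfrak{A}_1[x_{u_i}]-\mathfrak{A}_2[x_{u_i}]|+\mathfrak{A}_2[x_{u_i}]\leq 2\varepsilon+\alpha_{u_i}(\rho_{u_i})<3\varepsilon$ via Corollary~\ref{cor:technical_corollary} and Lemma~\ref{lem:big_A_and_approx_crit_lemma_relation}; then continuity of $\mathfrak{A}$ (Lemma~\ref{lem:upper_lower_semicontinuity}) together with $\|x_{\ell_i}-x_{u_i}\|\to 0$ gives $\mathfrak{A}[x_{\ell_i}]<4\varepsilon<\varepsilon_0$, the desired contradiction. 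This is exactly the paper's final step.
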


\begin{proof}
Since the theorem holds for $|\mathcal{S}| < \infty$ by Lemma \ref{lem:convergence_finite_number_of_successful_steps}, we assume that $\mathcal{S}$ is a countably infinite set. Following the ideas of the proof of \cite[Thm.\ 10.13]{Conn2009a} we prove the assertion of the theorem by contradiction. Assume that there exists a subsequence $\{\hat{k}_i\}_i \subseteq \mathcal{S}$ such that
\begin{equation}\label{eq:in_convergenceproof_contradiction_assumption}
\mathfrak{A}[x_{\hat{k}_i}] \geq \varepsilon_0
\end{equation}
for some $\varepsilon_0 > 0$ for all $i$. It immediately follows from Lemma \ref{lem:quality_of_approx_criticality_measure}  that
\[
\alpha_{\hat{k}_i}(\rho_{\hat{k}_i}) \geq \varepsilon
\]
for some $\varepsilon > 0$ for all $i$ sufficiently large; in particular this holds true for
\begin{equation}\label{eq:inproof_convergence_theorem_eq2}
\varepsilon < \frac{\varepsilon_0}{4}.
\end{equation}
Based on the subsequence $\{\hat{k}_i\}_i$ we define two subsequences $\{k_i\}_i$ and $\{l_i\}_i$ of all steps as follows: starting from $k_1 = \hat{k}_1$ we choose the first index $l_1 > k_1$ for which $\alpha_{l_1}(\rho_{l_1}) < \varepsilon$ and define the remaining members of the two subsequences inductively. Determine $j_i := \min\{ j \in \mathbb{N} \; : \; \hat{k}_j > l_i\}$, set $k_{i+1} = \hat{k}_{j_i}$, and choose $l_{i+1} > k_{i+1}$ as being the first index for which $\alpha_{l_{i+1}}(\rho_{l_{i+1}}) < \varepsilon$. 
Note that the existence of $\{l_i\}_{i}$ is guaranteed by (\ref{lem:iminf_of_alphak_goes_to_zero}). 
We thus arrive at subsequences of indices satisfying
\begin{equation}\label{eq:inproof_convergence_theorem_eq1}
\alpha_k(\rho_k) \geq \varepsilon \quad \mbox{for} \quad k_i \leq k < l_i \quad \mbox{and} \quad \alpha_{l_i}(\rho_{l_i}) < \varepsilon.
\end{equation}

Before we conclude the proof of convergence of $\{x_k\}_k$ to a first-order stationary point, we first have to show that $\lim_{i \rightarrow \infty}\|x_{k_i} - x_{l_i}\| = 0$; cf.\ the proof of \cite[Thm.\ 10.13]{Conn2009}. For this we define the set of indices
\[
\mathcal{K} := \bigcup\limits_{i \in \mathbb{N}_0}\left\{k \in \mathbb{N}_0 \;:\; k_i \leq k < l_i\right\},
\]
with the sequences $\{k_i\}_i$ and $\{l_i\}_i$ as defined above. We know that $\alpha_k(\rho_k) \geq \varepsilon$ for $k \in \mathcal{K}$. Thus, since $\rho_k \rightarrow 0$ (see Lemma \ref{lem:tr_region_radius_converges_to_zero}) it follows from Lemma \ref{lem:successful_step} that the iteration $k$ is acceptable or successful for all $k \in \mathcal{K}$ large enough. For every $k \in \mathcal{K} \cap \mathcal{S}$ we have
\[
f(x_k) - f(x_{k+1}) \geq \eta_0 \left(m_k^f(x_k) - m_k^f(x_{k+1})\right) \geq \eta_0 \mu_1 \alpha_k(\rho_k) \rho_k \geq \eta_0 \mu_1 \varepsilon \rho_k.
\]
Thus we obtain 
\[
\|x_{k_i} - x_{l_i}\| \leq \sum\limits_{\substack{j=k_i\\j \in \mathcal{K} \cap \mathcal{S}}}^{l_i-1} \|x_j - x_{j+1}\| \leq  \sum\limits_{\substack{j=k_i \\ j \in \mathcal{K} \cap \mathcal{S}}}^{l_i-1} \rho_{j} \leq \frac{1}{\eta_1 \mu_1 \varepsilon} \left(f(x_{k_i}) - f(x_{l_i})\right)
\]
for $k_i$ sufficiently large. Noting that the sequence $\{f(x_k)\}_k$ is bounded from below (see Assumption \ref{ass:general_assumptions}) and monotonically decreasing, it follows that the left-hand side of the inequality above must converge to zero for $i \rightarrow \infty$.

Now, due to the continuity of the exact criticality measure (see Lemma \ref{lem:upper_lower_semicontinuity}) and $\|x_{k_i} - x_{l_i}\| \rightarrow 0$, it holds that $ \left| \mathfrak{A}[x_{l_i}]  -  \mathfrak{A}[x_{k_i}]\right| < \varepsilon$ for $i$ sufficiently large. Moreover, using the fact that $\rho_{l_i} \rightarrow 0$, Corollary \ref{cor:technical_corollary} along with Lemma \ref{lem:big_A_and_approx_crit_lemma_relation} yields
\begin{align*}
\mathfrak{A}[x_{k_i}] &\leq \left|\mathfrak{A}[x_{k_i}] - \mathfrak{A}[x_{l_i}]\right| + \left|\mathfrak{A}[x_{l_i}] - \mathfrak{A}_1[x_{l_i}]\right| + \left|\mathfrak{A}_1[x_{l_i}] - \mathfrak{A}_2[x_{l_i}]\right| + \mathfrak{A}_2[x_{l_i}]\\
& < \varepsilon + \varepsilon + \varepsilon + \alpha_{l_i}(x_{l_i}) < 4\varepsilon < \varepsilon_0
\end{align*}
for $i$ sufficiently large, which contradicts (\ref{eq:in_convergenceproof_contradiction_assumption}).
\end{proof}

\section{Implementation and choice of parameters}\label{sec:practical_aspects}
Having discussed the theoretical properties of Algorithm \ref{alg:algorithm1} in Section \ref{sec:convergence_proof}, we now comment on the practical implementation of NOWPAC. In particular we address the practical choice of the order reduction parameter $p$ as well as the existence of trial steps $\{s_k\}$ in {\tt STEP 2} satisfying Assumptions \ref{ass:stepassumptions}.

First we examine the existence of trial steps satisfying Assumptions \ref{ass:stepassumptions}; for this we consider the optimal solutions $\{\hat{d}_k\}$ of the criticality subproblem (\ref{eq:approx_criticality_measure}). We assume that the refinements in {\tt STEP 1} result in $\|g_k^f\| > 0$ eventually; otherwise, since {\tt STEP 1} ensures full linearity of the objective model, we have $\Vert \nabla f(x_k)\Vert  = 0$, i.e., $x_k$ is already a first-order critical point. It holds that
\[
\min\left\{ \varepsilon_c,\, \mu^{-q}\rho_k^{q}\right\} \leq \alpha_k(\rho_k) = -\frac1{\rho_k}\left\langle g_k^f, \hat{d}_k\right\rangle =
\frac1{\rho_k} \left\Vert g_k^f\right\Vert  \left\Vert \hat{d}_k\right\Vert  \cos(\phi_d),
\]
where $\phi_d$ denotes the angle between $-g_k^f$ and $\hat{d}_k$. The first inequality is a direct consequence of {\tt STEP 1} in Algorithm~\ref{alg:algorithm1} for $\rho_k$ sufficiently small. Moreover, 
since $\hat{d}_k$ is a descent direction
 we know that $\phi_d < \frac{\pi}{2}$ and thus $\cos(\phi_d) > 0$. Thus, for every $x_k$ that is not an optimal solution of (\ref{eq:general_opt_problem}) we have 
\[
\left\|\hat{d}_k\right\| \geq  \frac{\min\left\{ \varepsilon_c, \, \mu^{-q} \rho_k^{q}\right\}\rho_k}{\left\|g_k^f\right\| \cos(\phi_d)},
\]
which justifies Assumption \ref{ass:stepassumptions}(b). For Assumption \ref{ass:stepassumptions}(a) we note that $m_k^f(x_k) - m_k^f(x_k + \hat{d}_k) = -\langle g_k^f, \hat{d}_k\rangle - t(\hat{d}_k)$ with the remainder term of the Taylor approximation $t(\hat{d}_k) \in \mathcal{O}(\| \hat{d}_k \|^2)$. It holds that
\begin{align*}
m_k^f(x_k) - m_k^f(x_k + \hat{d}_k) &= -\left\langle g_k^f, \hat{d}_k\right\rangle - t(\hat{d}_k)
\geq \rho_k \alpha_k(\rho_k) - \left|t(\hat{d}_k)\right|\\
&= \rho_k \alpha_k(\rho_k) - \|\hat{d}_k\|^2\left|t(\hat{d}_k)\|\hat{d}_k\|^{-2}\right|\\
&\geq \rho_k \alpha_k(\rho_k) - \rho_k^2\left|t(\hat{d}_k)\|\hat{d}_k\|^{-2}\right|
\geq \frac12 \rho_k \alpha_k(\rho_k),
\end{align*}
for $\rho_k$ sufficiently small. For the last inequality we used the fact that $\rho_k \in o(\alpha_k(\rho_k))$, which is ensured by {\tt STEP 1} in Algorithm~\ref{alg:algorithm1}. In our implementation of NOWPAC, however, we avoid repeating {\tt STEP 1} whenever the trial step $s_k$ is infeasible and go directly to line {\tt 10} in Algorithm~\ref{alg:algorithm1}. We do this to reduce the computational costs of computing the criticality measure at every infeasible step. 

NOWPAC is designed to work in settings with costly objective function evaluations that dominate the cost of computing a good trial step $s_k$. This suggests it may be beneficial (in terms of the overall computational costs) to invest effort in computing a good trial step rather than looking for a quick and crude approximation. In our implementation of NOWPAC we use the CCSA algorithm \cite{Svanberg2002}, as implemented in the NLopt library \cite{JohnsonNLopt}, to compute the trial steps in {\tt STEP 2} of Algorithm~\ref{alg:algorithm1}, i.e., to find $s_k = \argmin_{x_k+s \in X_k,\, \|s\| \leq \rho_k} m_k^f(x_k + s)$.

\medskip 

Next we discuss the choice of the order reductions $p$ and $q$. We briefly recall where we introduced $p$ and $q$:
\begin{itemize}
 \item The order reduction parameter $p$ appears in the definition of the inner boundary path (\ref{eq:definition_of_inner_boundary_path}); it is required within Lemma \ref{lem:successful_step_3} to prove that {\tt STEP 3} of Algorithm~\ref{alg:algorithm1} eventually finds a feasible trial step, when the trust region radius is small enough.
 \item Within Lemma \ref{lem:successful_step_3}, we used Assumption \ref{ass:stepassumptions}(b) on the step size being a fraction of order $\rho_k^{1+q}$; the latter assumption is ensured by {\tt STEP 1} of Algorithm~\ref{alg:algorithm1}.
\end{itemize}
In practical applications, Algorithm \ref{alg:algorithm1} is always terminated when the trust region radius falls below the threshold $\rho_{min}$. We therefore discuss the choice of order reductions $p=0$ and $q=0$ in the {\it pre-asymptotic regime} of $\rho_k \geq \rho_{min}$.

We note that, as long as {\tt STEP 2} computes a descent direction $s_k$, we can always find (potentially small) parameters $\mu_1$ and $\mu_2$ such that Assumption \ref{ass:stepassumptions} is satisfied with $q = 0$ for all $\rho_k \geq \rho_{min}$.
Thus, Assumption \ref{ass:stepassumptions} can be satisfied for $\rho_k \geq \rho_{min}$, regardless of the choice of $q$ in {\tt STEP 1} of Algorithm~\ref{alg:algorithm1}, allowing us simply to check for $\rho_k > \mu \alpha_k(\rho_k)$ in {\tt STEP 1} of Algorithm~\ref{alg:algorithm1}. 
Revisiting the proof of Lemma \ref{lem:successful_step_3} with $p = 0$ and using $\|s_k\| \geq \mu_2 \rho_k^1$, we see that {\tt STEP 3} computes a feasible trial step $s_k$ if $\varepsilon_b \geq \kappa_c \mu_2^{-2}$.
Thus we are guaranteed to find feasible trial steps by choosing the inner boundary path constant large enough, even for the choice of $p = 0$ in the definition of the inner boundary path. Finally, note that choosing the inner boundary path to be a quadratic offset ($p = 0$) 
is consistent with Assumption \ref{ass:convexification_through_ibp} and Lemma \ref{lem:big_A_and_approx_crit_lemma_relation} since its Hessian is positive definite.
%
%
Finally, we propose a heuristic for an adaptive choice of the inner boundary path constant $\varepsilon_b$. From Lemma \ref{lem:successful_step_3} we see that $\varepsilon_b$ has to be chosen sufficiently large in order to 
convexify the constraints and to guarantee that {\tt STEP 3} in Algorithm~\ref{alg:algorithm1} will be passed with a feasible trial step. In practical applications, however, we have found that NOWPAC works very well for an \textit{a priori} fixed value of $\varepsilon_b > 0$ along with the adaptive scaling 
\begin{equation}\label{eq:adjusted_inner_boundary_constant}
\varepsilon_{b,k} := \varepsilon_{b} \left(\frac{\|s_{k-1}\|}{\rho_{k-1}}\right)^{2},
\end{equation}
to increase efficiency by not overly constraining the trial steps due to the inner boundary path. The situation of a too-large inner boundary path constant and its relaxation (\ref{eq:adjusted_inner_boundary_constant}) is depicted in the left plot of Figure \ref{fig:exampleforinnerboundarypath}, where we see an unnecessary restriction of the possible step size.

\section{Inexact evaluations of the objective function and constraints}
\label{sec:noisy_function_evaluations}
In the preceding sections we assumed that we are able to evaluate the objective function and the constraints up to a prescribed tolerance, so that the models $m_k^f$ and $m_k^c$ are fully linear (\ref{eq:fully_linear_equations}). This assumption requires the function evaluations to become more and more accurate when approaching a critical point. As we noted in Section \ref{sec:introduction}, there exist theoretical results in the context of derivative-based trust region methods (see \cite{Carter1991, Heinkenschloss2002} and the references therein) showing convergence in case of increasing accuracy of the evaluations while approaching the optimal design. For corresponding results for derivative-free methods, see for example \cite{Bortz1998, Kelley1999}.
In practical applications, we are often faced with situations where we cannot avoid inexact evaluations of the objective function or the constraints. Inexactness may stem from numerical errors, limitations on the number of cycles in a recursive procedure, inaccurate measurements, and other factors. Particularly in cases where the objective function and constraints are given only as black-box evaluations of a simulation code, we are not likely to be able to tune the model tolerances. Figure~\ref{fig:NoisyTarRespSurfs} provides an example of the inexact function evaluations that we would like our method to address; shown are evaluations of the objective function and a constraint function in the tar removal process model of Section~\ref{sec:num_results_tar_removal_process}. 
The small-scale roughness is the result of numerical errors. 

To avoid any ambiguity, we contrast our focus on numerical errors with the case of an objective or constraint function that depends on uncertain parameters, where the parameters may be constrained to some interval or endowed with a probability distribution. In the latter case, one might account for uncertainty by replacing the objective function or constraint with its ``robust counterpart,'' yielding a task in stochastic programming. We refer the interested reader to \cite{Beyer2007, Bhatnagar2013, Kushner1997, Robbins1951, Shapiro2009} and references therein. Methods for stochastic programming require the exploration of the uncertain parameter space in some fashion, and are not our focus here. Of course, there is a link between the introduction of robust objectives and the issue of numerical error; for instance, numerical evaluation of an expectation with respect to the uncertain parameters is subject to error due to a finite number of Monte Carlo samples or finite quadrature resolution. But our focus here is on the presence and magnitude of numerical errors only, regardless of how they originated. In other words, we do not distinguish among different sources of inexactness in evaluations of $f$ and $c$.

This section first addresses the situation where increasing the accuracy of the evaluations of the objective function and the constraints is possible. In this case we quantify the rate of noise reduction needed to guarantee convergence. Thereafter, we discuss regimes where the accuracy level cannot be adjusted and propose an indicator to detect when inexact evaluations of $f$ or $c$ prevent NOWPAC from making progress. In this case, we propose early termination of the algorithm to save computational effort and to prevent corruption of the results.

For the error analysis in this section we assume that the objective function and the constraints can each be split into a sum of two terms. The first terms are the functions themselves, satisfying Assumptions \ref{ass:general_assumptions} and \ref{ass:bound_on_model_Hessian}. The second terms are the errors. These error terms are only observed (via summation with the exact function value) at the finite number of points where the objective function and the constraints are evaluated. We fill in the gaps between these points using quadratic extensions of the error, via minimum-Frobenius-norm models $\delta_k^f$ and $\delta_k^{c}$. We point out that $\delta_k^f$ and $\delta_k^{c}$ are simply extensions of the observed errors, rather than approximations of the actual error. We assume that the magnitudes of the errors are bounded by $\delta_{k, max}^f$ and $\delta_{k, max}^{c}$. Beyond this, we do not make any additional assumptions, e.g., on the distribution of the error or even whether it is stochastic or deterministic. In order to be detectable, however, the errors at different points in the design space must be sufficiently uncorrelated. For example, if the error term degenerates to a constant offset, it is impossible to separate it from the underlying objective function or constraint by simply observing its sum with one of the latter. The same holds true for errors satisfying equivalent smoothness properties as the objective function and the constraints. 
Summarizing, the perturbed observed functions are given by
\begin{align*}
f_{n,k}(x) &:= f(x) + \delta_k^f(x),\\
c_{n,k}(x) &:= c(x) + \delta_k^{c}(x).
\end{align*}
In the following theorem we prove the rate of decay---with respect to the trust region radii---that the errors $\delta_k^f$ and $\delta_k^c$ must obey in order to guarantee convergence of NOWPAC; cf.\ also \cite{Kannan2012}. 
To simplify notation in the following theorem, without loss of generality we assume the design variables to be properly scaled and set $\rho_{max} = 1$.

\medskip

\begin{theorem}\label{thm:error_decrease_rate}
Consider the fully linear minimum-Frobenius-norm surrogates $m_k^{f_{n,k}}$ and $m_k^{c_{n,k}}$ of the observed noisy objective function $f_{n,k}$ and constraints $c_{n,k}$. The intermediate points $\{x_k\}_k$ computed by Algorithm \ref{alg:algorithm1} converge to a first-order critical point if $
\delta_{k, max}^f, \delta_{k, max}^c \in o\left(\rho_k^{2}\right)$ and the inner boundary path constant
$\varepsilon_b$ is greater than $\mu_2^{-\frac{2}{1+p}}(\kappa_c + \delta_{k,max}^c\rho_k^{-2\frac{1+q}{1+p}})$.
\end{theorem}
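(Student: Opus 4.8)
The strategy is to reduce the noisy setting to the convergence analysis of Section~\ref{sec:convergence_proof} by showing that the noisy minimum-Frobenius-norm surrogates $m_k^{f_{n,k}}$ and $m_k^{c_{n,k}}$ are still fully linear for the \emph{true} objective $f$ and constraints $c$, only with inflated constants that stay bounded because the errors decay like $o(\rho_k^2)$. By hypothesis $m_k^{f_{n,k}}$ is fully linear for $f_{n,k}=f+\delta_k^f$ with the constants in \eqref{eq:fully_linear_equations}; since $|\delta_k^f|\le\delta_{k,max}^f$ and a standard minimum-Frobenius-norm estimate gives $\|\nabla\delta_k^f\|\le\mathcal{O}(\delta_{k,max}^f/\rho_k)$ on $B(x_k,\rho_k)$, one obtains
\[
\bigl|f(x_k+s)-m_k^{f_{n,k}}(x_k+s)\bigr|\le\Bigl(\kappa_f+\tfrac{\delta_{k,max}^f}{\rho_k^2}\Bigr)\rho_k^2,\qquad
\bigl\|\nabla f(x_k+s)-\nabla m_k^{f_{n,k}}(x_k+s)\bigr\|\le\bigl(\kappa_{df}+\mathcal{O}(\delta_{k,max}^f/\rho_k^2)\bigr)\rho_k,
\]
and analogously for $c$. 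Since $\delta_{k,max}^f,\delta_{k,max}^c\in o(\rho_k^2)$, the inflated constants $\kappa_f+\delta_{k,max}^f/\rho_k^2$, $\kappa_c+\delta_{k,max}^c/\rho_k^2$, etc., are bounded (they converge to the original ones), so there exist fixed $\bar\kappa_f,\bar\kappa_c,\bar\kappa_{df},\bar\kappa_{dc}$ such that $m_k^{f_{n,k}}$ and $m_k^{c_{n,k}}$ are fully linear for $f$ and $c$ with these constants for all $k$ large enough. The same estimates keep the inclusions $X_{(x_k,\rho_k)}^+\subseteq X_k\subseteq X_{(x_k,\rho_k)}^-$ valid (after, if necessary, slightly enlarging $\kappa_{\lambda_1},\kappa_{\lambda_3}$, which does not alter the limiting identity $X_{(x_k,0)}^{\pm}=X_k^{ibp}$ since $\delta_{k,max}^c\to 0$).

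Granting this, I would re-run Lemmas~\ref{lem:quality_of_approx_criticality_measure}, \ref{lem:successful_step_3}, \ref{lem:successful_step}, \ref{lem:bounded_crit_measure_means_bounded_tr_radius}, \ref{lem:convergence_finite_number_of_successful_steps}, \ref{lem:tr_region_radius_converges_to_zero}, and Theorem~\ref{thm:overall_convergence_of_exact_criticality meausre} with $\kappa_f,\kappa_c$ replaced by $\bar\kappa_f,\bar\kappa_c$, and with $f,c$ replaced by $f_{n,k},c_{n,k}$ wherever the algorithm actually evaluates them (in particular in the acceptance ratio $r_k$). The acceptance argument of Lemma~\ref{lem:successful_step} is essentially unchanged: since $r_k$ compares $f_{n,k}$ with its own fully linear model $m_k^{f_{n,k}}$, the estimate $|r_k-1|\le 2\bar\kappa_f\rho_k/(\mu_1\alpha_k(\rho_k))$ holds with no extra error term, so iteration $k$ is acceptable or successful once $\rho_k\le A_k(\alpha_k)$ with $A_k$ defined using $\bar\kappa_f$ and the feasibility threshold supplied by the noisy analog of Lemma~\ref{lem:successful_step_3}. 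That analog is the only place where the condition on $\varepsilon_b$ enters: repeating its proof, $x_k+s_k\in X_k$ gives $m_k^{c_{n,k}}(x_k+s_k)\le-\varepsilon_b\|s_k\|^{2/(1+p)}$, whence by full linearity of $m_k^{c_{n,k}}$ for $c_{n,k}$, the bound $|\delta_k^c|\le\delta_{k,max}^c$, and Assumption~\ref{ass:stepassumptions}(b),
\[
c(x_k+s_k)\;\le\;-\varepsilon_b\|s_k\|^{\frac{2}{1+p}}+\kappa_c\rho_k^2+\delta_{k,max}^c\;\le\;-\varepsilon_b\mu_2^{\frac{2}{1+p}}\rho_k^{2\frac{1+q}{1+p}}+\kappa_c\rho_k^2+\delta_{k,max}^c .
\]
Dividing the requirement $c(x_k+s_k)\le 0$ by $\rho_k^{2(1+q)/(1+p)}$ and using $q<p$ together with $\rho_k\le\rho_{max}=1$ (so $\rho_k^{2-2(1+q)/(1+p)}\le 1$), one sees that $c(x_k+s_k)\le 0$ holds whenever $\varepsilon_b\ge\mu_2^{-2/(1+p)}\bigl(\kappa_c+\delta_{k,max}^c\rho_k^{-2(1+q)/(1+p)}\bigr)$ — exactly the hypothesis (and the right-hand side is bounded in $k$, since $\delta_{k,max}^c\in o(\rho_k^2)$ and $2(1+q)/(1+p)<2$). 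Hence {\tt STEP~3} eventually returns a feasible trial step, the limit point is feasible by continuity of $c$, and the rest of the chain closes.

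The main obstacle is the bookkeeping that shows the noisy observations do not break the two global mechanisms behind the proof. First, the passage from a vanishing subsequence of the approximated criticality measure $\alpha_k$ — now built from the noisy models — to $\mathfrak{A}[x_k]\to 0$ (Lemma~\ref{lem:quality_of_approx_criticality_measure}): the term $|\mathfrak{A}_1[x_k]-\mathfrak{A}_2[x_k]|$ is now bounded by $\bar\kappa_{df}\rho_k+\mathcal{O}(\delta_{k,max}^f/\rho_k)$, which still vanishes because $\delta_{k,max}^f/\rho_k\to 0$, while the $|\mathfrak{A}-\mathfrak{A}_1|$ estimate is untouched since $X_{(x,\rho)}^{\pm}$ are defined independently of the particular surrogate. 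Second, the descent estimate underlying $\rho_k\to 0$ and the telescoping bound $\|x_{k_i}-x_{l_i}\|\to 0$ in Lemma~\ref{lem:tr_region_radius_converges_to_zero} and the proof of Theorem~\ref{thm:overall_convergence_of_exact_criticality meausre}: on an accepted step one only gets $f(x_k)-f(x_{k+1})\ge\eta_0\mu_1\alpha_k(\rho_k)\rho_k-2\delta_{k,max}^f$, so the error must be swallowed by a fraction of the guaranteed model decrease. This is precisely where $o(\rho_k^2)$ — rather than merely $\mathcal{O}(\rho_k^2)$ — is used: it forces $2\delta_{k,max}^f\le\tfrac12\eta_0\mu_1\alpha_k(\rho_k)\rho_k$ for all large $k$ unconditionally, with no smallness assumption on an error constant, after which the original arguments yield $\rho_k\to 0$, then $\liminf_k\alpha_k(\rho_k)=0$, and finally $\lim_{k\to\infty}\mathfrak{A}[x_k]=0$, i.e., convergence of the iterates to a first-order critical point.
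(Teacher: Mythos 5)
Your proposal is correct and follows essentially the same route as the paper: establish that the noisy minimum-Frobenius-norm models remain fully linear for the exact $f$ and $c$ with constants inflated only by $\mathcal{O}(\delta_{k,max}/\rho_k^2)$ terms (bounded because the noise is $o(\rho_k^2)$), re-run the Section~\ref{sec:convergence_proof} machinery, and obtain the stated condition on $\varepsilon_b$ by redoing Lemma~\ref{lem:successful_step_3} with the safety margin $\delta_{k,max}^c$. You additionally make explicit two bookkeeping points the paper leaves implicit --- preservation of the inclusions $X_{(x,\rho)}^{+}\subseteq X_k\subseteq X_{(x,\rho)}^{-}$ and the absorption of the $2\delta_{k,max}^f$ error in the acceptance/telescoping descent estimate via $\delta_{k,max}^f=o(\rho_k^{1+q})$ --- which is a welcome strengthening rather than a deviation.
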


\begin{proof}
First we show that if $\delta_{k, max}^f, \delta_{k, max}^c \in o\left(\rho_k^{2}\right)$, the full linearity of the noisy models is maintained. In \cite[Thm.\ 5.4]{Conn2009a} it is shown that for minimum-Frobenius-norm models we have
\begin{align*}
\left| f_{n,k}(x_k + s) - m_k^{f_{n,k}}(x_k + s)\right| & \leq  \kappa_{f_{n}} \rho_k^{2} \\
\left| c_{n,k}(x_k + s) - m_k^{c_{n,k}}(x_k + s)\right| & \leq  \kappa_{c_{n}} \rho_k^{2} \\
\left\| \nabla f_n(x_k + s) - \nabla m_k^{f_n}(x_k + s)\right\| & \leq \kappa_{df_n} \rho_k \\
\left\| \nabla c_n(x_k + s) - \nabla m_k^{c_n}(x_k + s)\right\| & \leq \kappa_{dc_n} \rho_k,
\end{align*}
with
\begin{align}\label{eq:pertubed_fully_linearity_coefficients}
\begin{split}
\kappa_{f_n} &= \left(\kappa_{n} + \frac12\right) \left(\nu_{n,k}^f + \left\|H_k^{f_{n,k}}\right\|\right)\\
\kappa_{c_n} &= \left(\kappa_{n} + \frac12\right) \left(\nu_{n,k}^c + \left\|H_k^{c_{n,k}}\right\|\right)\\
\kappa_{df_{n}} &= \kappa_{n} \left(\nu_{n,k}^f + \left\|H_k^{f_{n,k}}\right\|\right)\\[3mm]
\kappa_{dc_{n}} &= \kappa_{n} \left(\nu_{n,k}^c + \left\|H_k^{c_{n,k}}\right\|\right).
\end{split}
\end{align}
Here the constant $\kappa_{n}$ depends on the geometry of the interpolation points, but it does not depend on the trust region radius $\rho_k$. Also, $\nu_{n,k}^f$ and $\nu_{n,k}^c$ denote the Lipschitz constants of the gradients of $f_{n,k}$ and $c_{n,k}$, while $H_k^{f_{n,k}}$ and $H_k^{c_{n,k}}$ denote the Hessians of $m_k^{f_{n,k}}$ and $m_k^{c_{n,k}}$. Now we examine the Lipschitz constants of the gradients of $f_{n,k}$ and $c_{n,k}$. Using the triangle inequality we get
\begin{align*}
\left\| \nabla f_{n,k}(x_1) - \nabla f_{n,k}(x_2)\right\| 
&\leq \left\| \nabla f(x_1) - \nabla f(x_2)\right\| + \left\| \nabla \delta_k^f(x_1) + \nabla \delta_k^f(x_2)\right\|\\
&\leq \left(\nu^f + \left\|H_k^{\delta_k^f}\right\|\right)\left\| x_1 - x_2\right\|,\\
\left\| \nabla c_{n,k}(x_1) - \nabla c_{n,k}(x_2)\right\| 
&\leq \left\| \nabla c(x_1) - \nabla c(x_2)\right\| + \left\| \nabla \delta_k^c(x_1) + \nabla \delta_k^c(x_2)\right\|\\
&\leq \left(\nu^c + \left\|H_k^{\delta_k^c}\right\|\right)\left\| x_1 - x_2\right\|,
\end{align*}
where $\nu^f$ and $\nu^c$ are the Lipschitz constants of $\nabla f$ and $\nabla c$, and $H_k^{\delta_k^f}$ and $H_k^{\delta_k^c}$ denote the Hessians of the error functions $\delta_k^f(x)$ and $\delta_k^c(x)$. From the above inequalities we obtain upper bounds on the Lipschitz constants:
\begin{equation}\label{eq:lipschitz_noisy_function}
\nu_{n,k}^f \leq \nu^f + \left\|H_k^{\delta_k^f}\right\| \quad \mbox{and} \quad
\nu_{n,k}^c \leq \nu^c + \left\|H_k^{\delta_k^c}\right\|.
\end{equation}
 Furthermore it holds that
\begin{align*}
\left\|H_k^{f_{n,k}}\right\| &= \left\|H_k^{f} + H_k^{\delta_k^f}\right\| \leq \left\|H_k^{f}\right\| + \left\|H_k^{\delta_k^f}\right\| \quad \mbox{and}\\
\left\|H_k^{c_{n,k}}\right\| &= \left\|H_k^{c} + H_k^{\delta_k^c}\right\| \leq \left\|H_k^{c}\right\| + \left\|H_k^{\delta_k^c}\right\|,
\end{align*}
which, together with (\ref{eq:lipschitz_noisy_function}) and  (\ref{eq:pertubed_fully_linearity_coefficients}), yields
\begin{align}\label{eq:error_reduction_proof_inequalities}
\begin{split}
\kappa_{f_n} & \leq \left(\kappa_{n} + \frac12\right) \left(\nu^f + \left\|H_k^{f}\right\| + 2\left\|H_k^{\delta_k^f}\right\|\right) \leq \kappa_1^f + 2\kappa_2^f \delta_{k, max}^f\rho_k^{-2} \\
\kappa_{c_n} & \leq \left(\kappa_{n} + \frac12\right) \left(\nu^c + \left\|H_k^{c}\right\| + 2\left\|H_k^{\delta_k^c}\right\|\right) \leq \kappa_1^c + 2\kappa_2^c \delta_{k, max}^c\rho_k^{-2}\\
\kappa_{df_n} & \leq \kappa_{n} \left(\nu^f + \left\|H_k^{f}\right\| + 2\left\|H_k^{\delta_k^f}\right\|\right) \leq \kappa_{3}^f + \kappa_2^f \delta_{k, max}^f\rho_k^{-2}\\
\kappa_{dc_n} & \leq \kappa_{n} \left(\nu^c + \left\|H_k^{c}\right\| + 2\left\|H_k^{\delta_k^c}\right\|\right) \leq \kappa_{3}^c + \kappa_2^c \delta_{k, max}^c\rho_k^{-2}.
\end{split}
\end{align}
for constants $\kappa_1^f$, $\kappa_1^c$, $\kappa_2^f$, $\kappa_2^c$, $\kappa_3^f$, $\kappa_3^c > 0$. In the second inequalities we used
\[
\left\|H_k^{\delta_k^f}\right\| \leq \bar{\kappa}_2^f \frac{\delta_{k, max}^f}{\rho_k^{2}} \quad \mbox{and} \quad
\left\|H_k^{\delta_k^c}\right\| \leq \bar{\kappa}_2^c \frac{\delta_{k, max}^c}{\rho_k^{2}},
\]
cf.\ \cite[Thm.\ 5.7]{Conn2009a} where we replace the upper bound mentioned in the proof of  \cite[Thm.\ 5.7]{Conn2009a} by $\max_{x\in B(x_k, \rho_k)} |\delta_k^f(x)| \leq \delta_{k, max}^f$ and  $\max_{x\in B(x_k, \rho_k)} |\delta_k^c(x)| \leq \delta_{k, max}^c$ respectively. 
Thus the full linearity properties (\ref{eq:fully_linear_equations}) and Assumption \ref{ass:bound_on_model_Hessian} hold, if we ensure that the right-hand sides of (\ref{eq:error_reduction_proof_inequalities}), i.e., the values of $\kappa_{f_n}$, $\kappa_{c_n}$, $\kappa_{df_n}$ and $\kappa_{bhf_n}$, do not grow unboundedly; in particular this is the case if 
\[
\delta_{k, max}^f \in o\left(\rho_k^{2}\right) \quad \mbox{ and } \quad \delta_{k, max}^c \in \left(\rho_k^{2}\right).
\]
To conclude the proof of convergence, we relate the noisy function evaluations $f_{n,k}$ and $c_{n,k}$ to the exact objective function $f$ and constraints $c$, i.e.,
\begin{align*}
\left| f(x_k + s) - f_{n,k}(x_k + s)\right| & \leq \delta_{k, max}^f \in o(\rho_k^2)\\
\left| c(x_k + s) - c_{n,k}(x_k + s) \right| & \leq \delta_{k, max}^c \in o(\rho_k^2) \\
\left\|\nabla f(x_k + s)  - \nabla f_{n,k}(x_k + s) \right\| & = \left\|\nabla \delta_k^f(x_k + s) \right\| \in o(\rho_k)\\
\left\|\nabla c(x_k + s)  - \nabla c_{n,k}(x_k + s) \right\| & = \left\|\nabla \delta_k^c(x_k + s) \right\| \in o(\rho_k).
\end{align*}
The latter inclusion follows from \cite[Thm. 5.4]{Conn2009} by interpreting $\delta_k^f$, or $\delta_k^c$ respectively, as an $o(\rho_k^2)$ approximation of the constant zero function and replacing the assumption on exact function evaluations in the proof of \cite[Thm. 5.4]{Conn2009} with the point-wise error $o(\rho_k^2)$ (see Lemma \ref{lem:completion_of_noise_lemma} for more details).
In summary, $m_k^{f_{n,k}}$ and $m_k^{c_{n,k}}$ are fully linear models for the exact objective function $f$ and the constraints $c$, respectively.

Finally we address the issue that Algorithm~\ref{alg:algorithm1} may pass {\tt STEP 3} with a trial step that is incorrectly designated as feasible. 
We have to ensure that no infeasible step is accepted because it appears to be feasible due to the noise. For this we revisit the proof of Lemma \ref{lem:successful_step_3} and now require the safety margin $c(x_k + s_k) \leq -\delta_{k,max}^c$. It follows that
\[
\rho_k \leq \left(\frac{1}{\kappa_c}\left(\varepsilon_b \mu_2^{\frac{2}{1+p}} - \delta_{k,max}^c\rho_k^{-2\frac{1+q}{1+p}}\right)\right)^{\frac{1+p}{2(p-q)}}.
\]
Thus, in the pre-asymptotic phase we have to choose $\varepsilon_b$ large enough so that the right-hand side in the inequality above is always greater than $\rho_{max} = 1$, yielding
\[
\varepsilon_b \geq \mu_2^{-\frac{2}{1+p}}\left(\kappa_c + \delta_{k,max}^c\rho_k^{-2\frac{1+q}{1+p}} \right),
\]
where this additional restriction vanishes for decreasing trust region radius $\rho_k$.
\end{proof}

\medskip

Note that even though we only have access to inexact evaluations of the constraints, we still have to be able to check feasibility in {\tt STEP 3} in Algorithm~\ref{alg:algorithm1}. However, we point out that in the asymptotic regime $k \rightarrow \infty$, $\delta_{k,max}^c = o(\rho_k^{2})$ will be dominated by the requirement from Assumption \ref{ass:convexification_through_ibp}. Thus, in practical applications, the inner boundary path constant $\varepsilon_b$ has to be adapted to the magnitude of the errors (or simply chosen sufficiently large) in order to ensure convergence of Algorithm \ref{alg:algorithm1} also in the pre-asymptotic regime where the trust region radii are not close to zero.

We now use Theorem \ref{thm:error_decrease_rate} to define an indicator that estimates the minimum trust region radius at which further progress of Algorithm \ref{alg:algorithm1} is expected not to improve the optimization result. In other words, the indicator detects regimes in which $\delta_{k, max}^f, \delta_{k, max}^c = o\left(\rho_k^{2}\right)$ is violated. The indicator is based on observing the increase of the norms of $H_k^{f_n}$ or $H_k^{c_n}$ as functions of the trust region radius. Note that these Hessians are computed in every iteration of Algorithm \ref{alg:algorithm1} and are therefore readily available without additional computational cost.
As described in the proof of Theorem \ref{thm:error_decrease_rate}, the norms of $H_k^{f_n}$ and $H_k^{c_n}$ show the same asymptotic behavior as $\delta_{k,max}^{f}\rho_k^{-2}$ and $\delta_{k,max}^{c}\rho_k^{-2}$ for decreasing trust region radii. Thus, on a logarithmic scale, the slope $\tau$ of the growth of $\|H_k^{f_n}\|$ and $\|H_k^{c_n}\|$ with respect to the trust region radii $\rho_k$ should not grow for vanishing $\rho_k$. We estimate this slope using linear regression of the corresponding norms of the Hessians at rejected steps, i.e., at steps where the intermediate point does not change. In these steps, the Hessian is supposed to be of the same order of magnitude, provided the geometry constant $\kappa_n$ does not grow unboundedly. The latter property is ensured by Algorithm \ref{alg:algorithm1} within the model improvement steps. The result is the approximated slope $\tau\left( \|H_k\| \right)$, where $H_k$ denotes either $H_k^{f_n}$ or $H_k^{c_n}$. The slope can be categorized as indicating a convergent or possibly non-convergent regime, according to the corresponding convergence properties of Algorithm \ref{alg:algorithm1}; in our numerical examples we set the noise threshold to $1$, i.e., we used the classification $\tau\left( \|H_k\| \right) < 1$ for the convergent regime and $\tau\left( \|H_k\| \right) \geq 1$ for the possibly non-convergent regime.
If possibly non-convergent iterations are detected, we suggest early termination of Algorithm \ref{alg:algorithm1} in order to prevent deterioration of the results.

\section{Numerical results}\label{sec:numerical_results}
In this section we apply NOWPAC to several optimization problems. In Sections \ref{sec:rosenbrock} and \ref{sec:exponential}, we discuss two model problems: the Rosenbrock function (\ref{eq:rosenbrock}) and a nonlinear constrained anisotropic exponential example (\ref{eq:exponential_obj}). We use these examples for validation of the algorithm, knowing the exact optimal points and the minimum objective values. In both examples, we also demonstrate the effectiveness of the error indicator proposed in Section \ref{sec:noisy_function_evaluations}. In Section \ref{sec:benchmark_test} we demonstrate NOWPAC's efficiency on test problems from the Schittkowski benchmark set \cite{Schittkowski2008}. Then in Section \ref{sec:num_results_tar_removal_process}, we apply NOWPAC to a large-scale black box model of tar removal in a biomass-to-liquid plant. In this example, the model enters evaluations of both the objective and the constraints. In all examples, we set the parameters in NOWPAC to the values shown in Table \ref{tab:parameters_example1}. We use the initial trust region radius $\rho_0 = 0.1$ throughout the examples.

\begin{table}[!htb]
\caption{Default parameters of NOWPAC used in the test examples of Sections \ref{sec:rosenbrock}--\ref{sec:num_results_tar_removal_process}.}\label{tab:parameters_example1}
\begin{center}\footnotesize
\renewcommand{\arraystretch}{1.3}
\begin{tabular}{|c||c|c|c|c|c|c|c|c|c|}\hline
parameter & $\varepsilon_b$ &  $\gamma_{inc}$ & $\gamma$ & $\omega$ & $\eta_0$ & $\eta_1$ \\\hline\hline
value     & $10.0$          &  $2.0$        & $0.8$    & $0.6$    & $0.1$    & $0.7$ \\\hline
\end{tabular}
\end{center}
\end{table}

For comparison we also compute the optimal points using the linear surrogate-based solver COBYLA and the direct search algorithms NOMAD, SDPEN, and GSS-NLC. We use the implementation of COBYLA \cite{Powell1994, Powell1998} from the NLopt optimization library for nonlinear optimization \cite{JohnsonNLopt} and the implementation of NOMAD as in \cite{LeDigabel2011}. The asynchronous parallel pattern search (APPS) method GSS-NLC \cite{Adams2013, Gray2006} is a pattern search method based on sampling of the objective function augmented by a penalty approach for the constraints. In all examples we picked an initial penalty parameter of $10^{3}$ and an initial step size of $0.1$, where we found GSS-NLC to perform best. We used the GSS-NLC code as provided by the hybrid optimization parallel search package (HOPSPACK) \cite{Plantenga2009}. All computations for the benchmark problems were performed on a $2.6$ GHz Intel Core i$7$ processor using the GNU compiler version $4.8$.

\subsection{Rosenbrock function}\label{sec:rosenbrock}
The first example is the unconstrained optimization of the Rosenbrock function,
\begin{equation}\label{eq:rosenbrock}
\min\limits_{(x_1, x_2)\in \mathbb{R}^2} (x_2 - x_1^2)^2 + (x_1 - 1)^2,
\end{equation}
where the optimal point $x^\ast = (1, 1)^T$ and the minimal objective value $f^\ast = 0$ are known analytically. We start the optimization at $x_0 = (1.5, 1.5)^T$. The Rosenbrock function exhibits small gradients in the neighborhood of the optimal point $x^\ast$, and thus constitutes a setting in which derivative-free trust region methods like NOWPAC are forced to take small steps; we recall that the size of the trust region, and therefore the step size, is tightly connected to the size of the gradients. For this reason, we find it worthwhile to include this unconstrained test case to discuss the performance of NOWPAC.

The performances of the solvers NOWPAC, COBYLA, NOMAD, SDPEN, and GSS-NLC are summarized in Table \ref{tab:rosenbrock_sum}. We see that all methods result in reasonable approximations of the exact optimum. Looking at the number of function evaluations, we see that NOWPAC requires fewer function evaluations than the other solvers. 
\begin{table}[htbp]
\caption{Summarized performance statistics of  NOWPAC, COBYLA, NOMAD, SDPEN, and GSS-NLC applied to the Rosenbrock minimization problem \ref{eq:rosenbrock}. $SC$ indicates the stopping criteria, which is the $\rho_{min}$ threshold for NOWPAC and the absolute distance in the coordinate directions for COBYLA, NOMAD, SDPEN, and GSS-NLC. $d_x$ and $d_f$ denote the Euclidean distances between the approximated and the analytical solution.}\label{tab:rosenbrock_sum}
\begin{center}\footnotesize
\renewcommand{\arraystretch}{1.3}
\begin{tabular}{|c||c|c|c|c|}\hline
						   & $SC$      & $\#eval$ & $d_x$              & $d_f$ \\\hline\hline
NOWPAC                    & $10^{-3}$ & $64$     & $2.27 \cdot 10^{-4}$  & $1.05 \cdot 10^{-8}$\\\hline
COBYLA                    & $10^{-3}$ & $81$     & $1.81 \cdot 10^{-2}$  & $6.29 \cdot 10^{-5}$\\\hline     
NOMAD                     & $10^{-3}$ & $70$     & $0$                   & $0$\\\hline
SDPEN                     & $10^{-3}$ & $69$     & $0$                   & $0$\\\hline
GSS-NLC                   & $10^{-3}$ & $129$    & $6.12 \cdot 10^{-3}$  & $7.24 \cdot 10^{-6}$\\\hline\hline
NOWPAC                    & $10^{-4}$ & $65$     & $2.27 \cdot 10^{-4}$  & $1.05 \cdot 10^{-8}$\\\hline
COBYLA                    & $10^{-4}$ & $150$    & $8.47 \cdot 10^{-4}$  & $2.81 \cdot 10^{-7}$\\\hline     
NOMAD                     & $10^{-4}$ & $81$     & $0$                   & $0$\\\hline
SDPEN                     & $10^{-4}$ & $85$     & $0$                   & $0$\\\hline
GSS-NLC                   & $10^{-4}$ & $184$    & $4.07 \cdot 10^{-4}$  & $3.47 \cdot 10^{-8}$\\\hline\hline
NOWPAC                    & $10^{-5}$ & $76$     & $1.09 \cdot 10^{-4}$  & $2.07 \cdot 10^{-9}$\\\hline
COBYLA                    & $10^{-5}$ & $199$    & $1.05 \cdot 10^{-4}$  & $2.37 \cdot 10^{-9}$\\\hline     
SDPEN                     & $10^{-5}$ & $97$     & $0$                   & $0$\\\hline
NOMAD                     & $10^{-5}$ & $97$     & $0$                   & $0$\\\hline
GSS-NLC                   & $10^{-5}$ & $228$    & $6.35 \cdot 10^{-5}$  & $7.28 \cdot 10^{-10}$\\\hline
\end{tabular}
\end{center}
\end{table}

Next, we introduce artificial errors into the objective function. In particular, we add independent errors to every evaluation of the objective function, randomly drawn from a uniform distribution on the interval $[-\delta^f_{\max}, \delta^f_{\max}]$ with magnitude $\delta^f_{\max} \in \{10^{-2}, 10^{-3}, 10^{-4}\}$. We show the norms of the Hessians $H_k^f$ for one realization of an optimization run of NOWPAC in Figure \ref{fig:rosenbrock_noise}. 
\begin{figure}[htbp]
\begin{center}
\includegraphics[width=42mm]{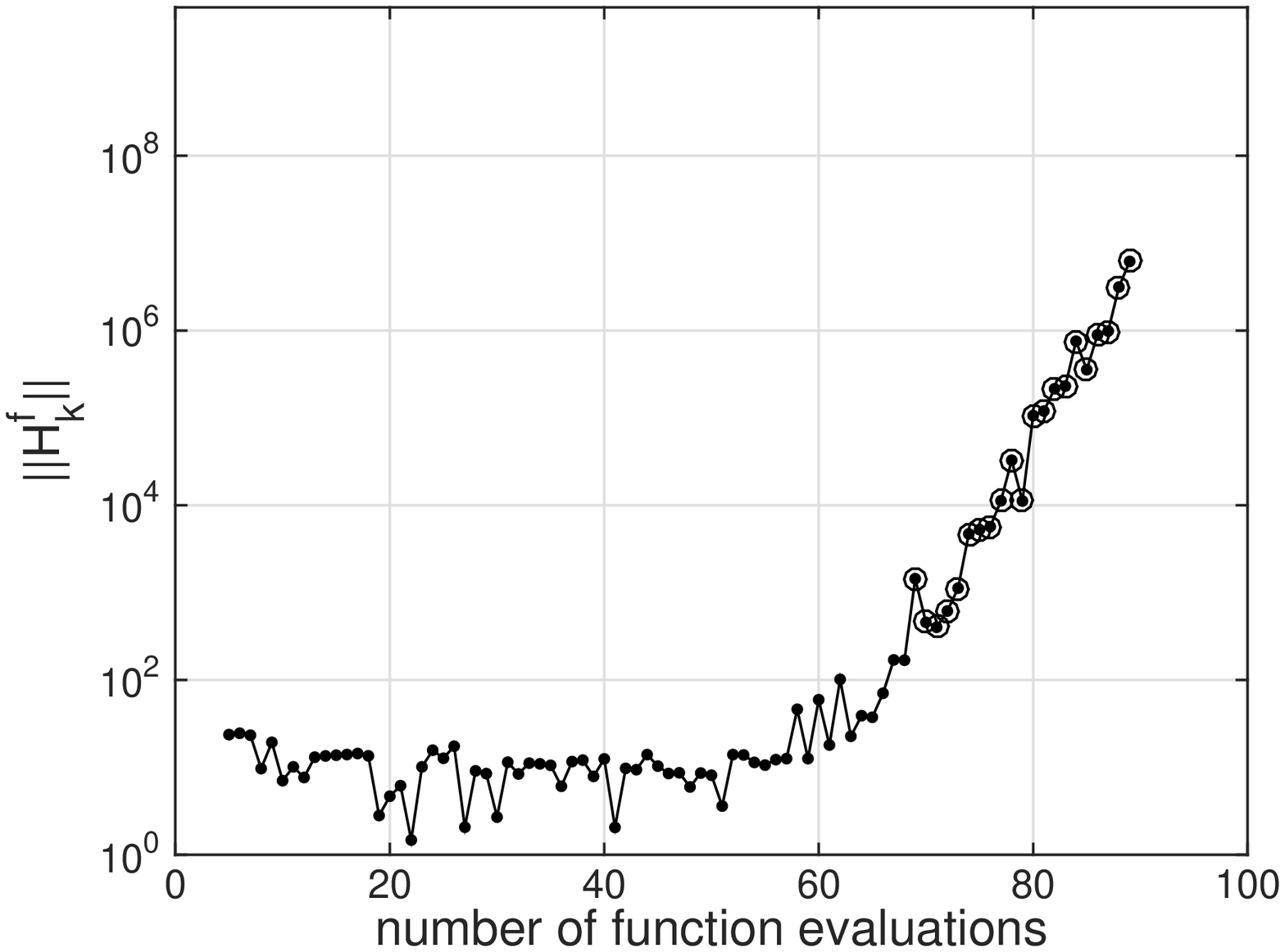}
\includegraphics[width=42mm]{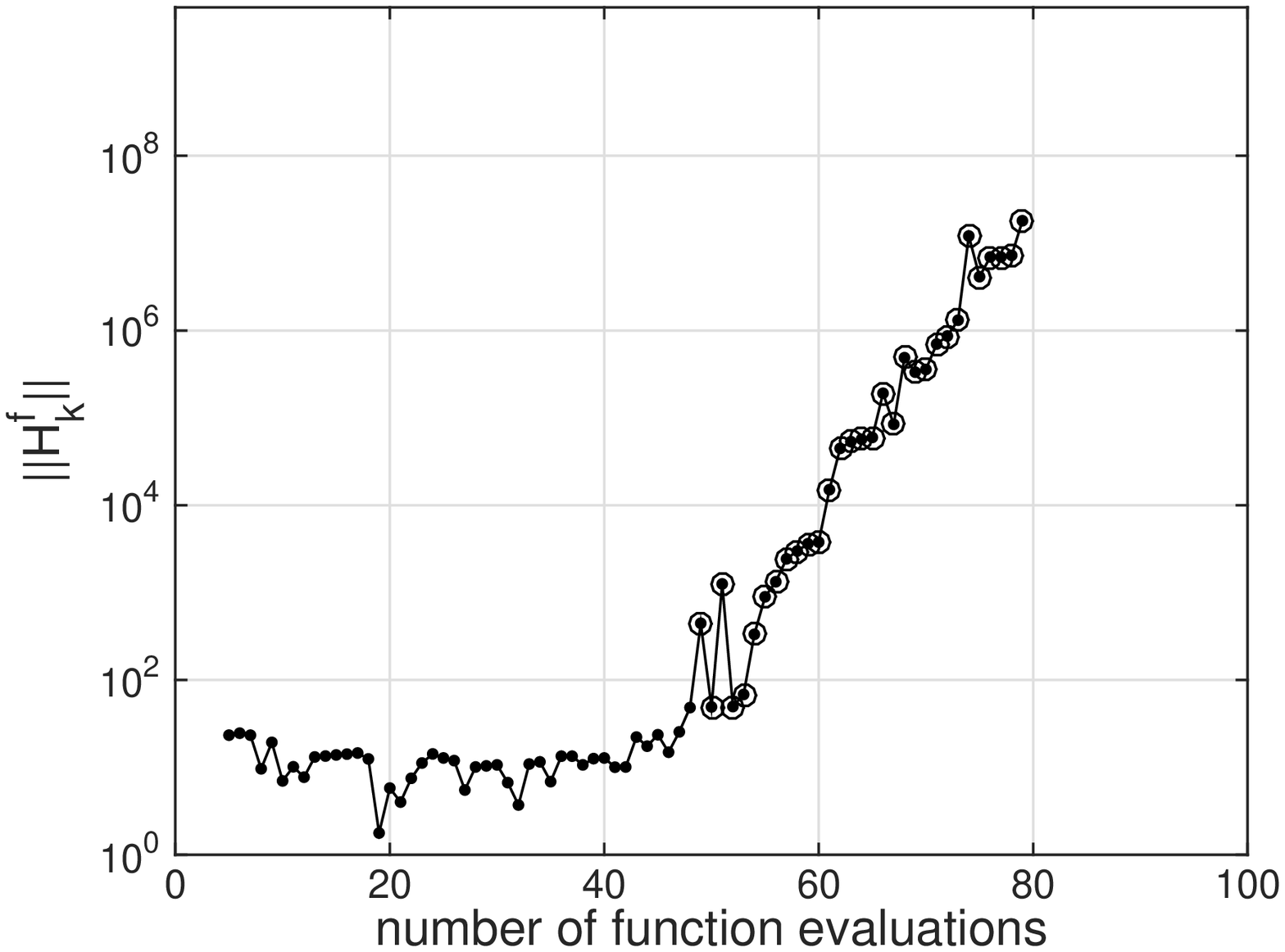}
\includegraphics[width=42mm]{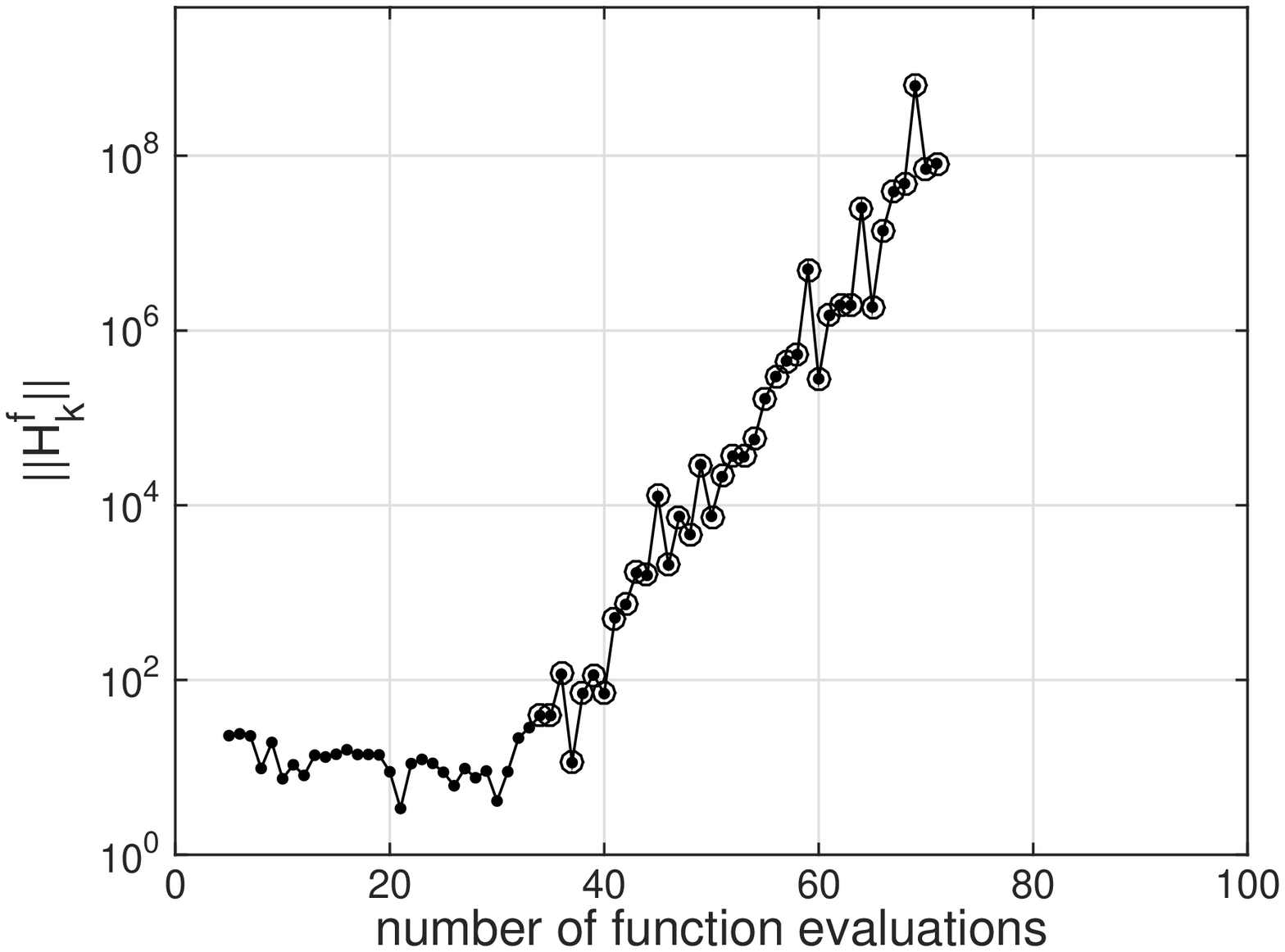}
\caption{Norms of the Hessians of the models of the objective function in (\ref{eq:rosenbrock}) for $\delta^f_{max} = 10^{-4}$ (left), $\delta^f_{max} = 10^{-3}$ (middle) and $\delta^f_{max} = 10^{-2}$ (right). Circles indicate the non-convergent regime.}\label{fig:rosenbrock_noise}
\end{center}
\end{figure}
To create these plots we switch off the early termination due to the detection of errors. (Note that NOWPAC would ordinarily stop after detecting a non-convergent iteration, as described in Section \ref{sec:noisy_function_evaluations}.) In Table \ref{tab:rosenbrock_sum_noise} we report the average distance from the approximated optimal point to the exact optimal design at early termination, as well as the corresponding average distances for the objective values. To compute these averages we ran NOWPAC $1000$ times with random samples for the errors in the objective function. All distances are computed at the iteration where NOWPAC detects the first non-convergent iteration. In the same table, we report the average number of saved function evaluations, i.e., the number of additional evaluations performed by a run with identical parameters but \textit{without} early termination due to inexact function evaluations. The numbers are rounded to the nearest integer.
\begin{table}[htbp]
\caption{Summarized performance of the application of NOWPAC to the noisy Rosenbrock minimization problem. The stopping criteria is set to $\rho_{min} = 10^{-5}$. $d_x$ and $d_f$ denote the average Euclidean distances of the approximations to the analytical solutions at early termination. $d_x^{end}$ and $d_f^{end}$ denote the average Euclidean distances of the approximations to the analytical solutions without early termination. The solution is computed $1000$ times and the average value of all outcomes is shown. $\#eval$ refers to the average number of function evaluations at early termination, whereas $\#saved$ is the average number of additional evaluations performed by the same run without early termination.}\label{tab:rosenbrock_sum_noise}
\begin{center}\footnotesize
\renewcommand{\arraystretch}{1.3}
\begin{tabular}{|c||c|c|c|c|c|c|}\hline
$\delta^f_{max}$  & $\#eval$ & $\#saved$ & $d_x$ & $d_f$ & $d_x^{end}$ & $d_f^{end}$\\\hline\hline
$10^{-2}$ & $ 33 $     &  $ 33 $      & $2.66\cdot10^{-1}$  & $1.01\cdot10^{-2}$  & $2.64\cdot10^{-1}$  & $9.48\cdot10^{-3}$\\\hline    
$10^{-3}$ & $ 56 $     &  $ 30 $      & $5.38\cdot10^{-2}$  & $7.75\cdot10^{-4}$  & $5.33\cdot10^{-2}$  & $7.79\cdot10^{-4}$\\\hline    
$10^{-4}$ & $ 71 $     &  $ 22 $      & $1.63\cdot10^{-2}$  & $8.75\cdot10^{-5}$  & $1.60\cdot10^{-2}$  & $8.60\cdot10^{-5}$\\\hline  
$10^{-5}$ & $ 80 $     &  $ 16 $      & $5.18\cdot10^{-3}$  & $9.01\cdot10^{-6}$  & $4.98\cdot10^{-3}$  & $8.59\cdot10^{-6}$\\\hline  
$0$               & $76$     &  $-$       &  $-$       &  $-$       & $1.09 \cdot 10^{-4}$  & $2.07 \cdot 10^{-9}$\\\hline
\end{tabular}
\end{center}
\end{table}
As expected, increasing the magnitude of the errors corrupts the optimal point. Moreover, the number of objective function evaluations declines significantly, even if early termination is switched off. To explain this trend, we point out that $d_f$ is roughly of the same order as the maximal magnitude of the errors, $\delta^f_{max}$. In this situation, the inexact function evaluations corrupt the acceptance ratio $r_k$ in {\tt STEP 4} of Algorithm \ref{alg:algorithm1}, misleading NOWPAC to reject steps. Using the noise indicator we are able to detect this regime and automatically terminate NOWPAC without evaluating many steps that will eventually be rejected. 
We should also point out that choosing a stopping criterion $\rho_{min}$ that is \textit{a priori} adjusted to the magnitude of the noise would also avoid many rejected steps, due to the noise overwhelming the shape of the objective. 
But in the examples above we did not presume to know anything about the magnitude of the noise, i.e.,  we did not change parameters of the algorithm according to $\delta^f_{max}$. This setup is intended to mimic practical situations in which the noise is not well characterized, and hence manually tailoring the stopping criterion to the noise magnitude is not possible.

\subsection{Constrained anisotropic exponential}\label{sec:exponential}
Our second example is the constrained minimization problem
\begin{equation}\label{eq:exponential_obj}
\min\limits_{x \in X} -\exp\left(x^TDx\right)
\end{equation}
with the diagonal scaling matrix $D = \mbox{diag}(1, 2, 3, 4, 5)$ and the feasible domain
\[
X = \left\{x \in \mathbb{R}^5 \; : \; \sin\left(\|x\|^2\right) \leq \frac12, \; \left\|x - \frac{3}{8}e_5\right\| \leq \frac{3}{8}\right\},
\]
where $e_5 = (0, 0, 0, 0, 1)^T$ denotes the fifth canonical unit vector. Here we have $5$ design parameters and $2$ constraint functions. The optimal point is known to be $x^\ast = (0, 0, 0, 0, \sqrt{\arcsin(0.5)})^T  \approx (0, 0, 0, 0, 0.724)^T$ with optimal value $f^\ast =  -1.37\cdot 10^{1}$. At the optimal point, the first constraint $c_1(x) = \sin(\|x\|^2) - 0.5$ is active, so that the optimal point is critical, $\mathfrak{A}[x^\ast] = 0$, but not stationary, $\|\nabla f(x^\ast)\| = 3.96 \cdot 10^{2}$. Despite this steep gradient at $x^\ast$, the objective function exhibits a relatively flat region around the origin, where the greatest descent can be achieved by varying the last coordinate. Thus, when starting at the point $x_0 = (0.1, 0.1, 0.1, 0.1, 0.1)^T$, reducing the objective function drives the intermediate points towards the boundary of the feasible domain $X$. Once the boundary is reached, further progress towards the minimum can only be made by moving \textit{along} the boundary of $X$. The shape of this objective and feasible domain therefore constitute a useful setting in which to discuss the effectiveness of constraint handling in NOWPAC.

\begin{table}[htbp]
\caption{Summarized performance statistics of NOWPAC, COBYLA, NOMAD, SDPEN, and GSS-NLC applied to the constrained minimization problem (\ref{eq:exponential_obj}). $SC$ indicates the stopping criteria, which is the $\rho_{min}$ threshold for NOWPAC and the absolute distance in the coordinate directions for COBYLA, NOMAD, SDPEN, and GSS-NLC. $d_x$ and $d_f$ denote the Euclidean distances between the approximated and the analytical solutions.}\label{tab:exponential_sum}
\begin{center}\footnotesize
\renewcommand{\arraystretch}{1.3}
\begin{tabular}{|c||c|c|c|c|}\hline
													& $SC$      & $\#eval$        & $d_x$              & $d_f$ \\\hline\hline
NOWPAC                    & $10^{-3}$ & $59$     & $2.23 \cdot 10^{-3}$ & $2.87 \cdot 10^{-4}$\\\hline
COBYLA                    & $10^{-3}$ & $81$     & $2.50 \cdot 10^{-3}$ & $4.54 \cdot 10^{-4}$\\\hline
NOMAD                     & $10^{-3}$ & $574$    & $6.37 \cdot 10^{-2}$ & $6.57 \cdot 10^{-2}$\\\hline
SDPEN                     & $10^{-3}$ & $153$    & $4.18. \cdot 10^{-1}$ & $9.49 $\\\hline
GSS-NLC                   & $10^{-3}$ & $1610$   & $1.51 \cdot 10^{-2}$ & $3.70 \cdot 10^{-3}$\\\hline\hline
NOWPAC                    & $10^{-4}$ & $96$     & $3.67 \cdot 10^{-4}$ & $4.86 \cdot 10^{-6}$\\\hline
COBYLA                    & $10^{-4}$ & $130$    & $8.74 \cdot 10^{-5}$ & $1.86 \cdot 10^{-6}$\\\hline
NOMAD                     & $10^{-4}$ & $980$    & $4.10 \cdot 10^{-2}$ & $2.52 \cdot 10^{-2}$\\\hline
SDPEN                     & $10^{-4}$ & $192$    & $4.18 \cdot 10^{-1}$ & $9.49 $\\\hline
GSS-NLC                   & $10^{-4}$ & $4073$   & $9.60 \cdot 10^{-3}$ & $1.28 \cdot 10^{-3}$\\\hline\hline
NOWPAC                    & $10^{-5}$ & $128$    & $3.15 \cdot 10^{-5}$ & $1.63 \cdot 10^{-8}$\\\hline
COBYLA                    & $10^{-5}$ & $160$    & $1.83 \cdot 10^{-5}$ & $1.14 \cdot 10^{-8}$\\\hline   
NOMAD                     & $10^{-5}$ & $2842$   & $1.94 \cdot 10^{-2}$ & $5.28 \cdot 10^{-3}$\\\hline
SDPEN                     & $10^{-5}$ & $221$    & $4.18. \cdot 10^{-1}$ & $9.49 $\\\hline
GSS-NLC                   & $10^{-5}$ & $9321$   & $2.23 \cdot 10^{-3}$ & $4.49 \cdot 10^{-4}$\\\hline
\end{tabular}
\end{center}
\end{table}

The performances of NOWPAC, COBYLA, NOMAD, SDPEN, and GSS-NLC are summarized in Table \ref{tab:exponential_sum}. Almost all methods, except SDPEN which did not converge to a critical solution, produce reasonable approximations of the optimal point and the corresponding minimal value of the objective function. However, for all stopping thresholds, NOWPAC requires the smallest number of evaluations of the objective function. In particular, GSS-NLC requires many function evaluations---first, because it must explore the design space in all five coordinate directions, and second, because it repeatedly explores the space while adaptively choosing a suitably high penalty parameter to meet the prescribed tolerances. Also NOMAD requires many function evaluations to find an approximate solution; the achieved accuracy is far less than the solution computed by NOWPAC and COBYLA. On this test example, SDPEN converged prematurely to a non-critical solution.

Next, we again introduce artificial errors of different magnitudes into the objective function and constraint function evaluations. We solve (\ref{eq:exponential_obj}) $1000$ times and report, in Table \ref{tab:exponential_sum_noise}, the average number of function evaluations (for termination after the first noisy iteration), the average number of saved function evaluations, as well as the average absolute error in the computed design and the corresponding objective value. We see that for noise in the objective function in particular, the number of saved function evaluations is significant.

\begin{table}[htbp]
\caption{Summarized performance of the application of NOWPAC to the noisy constrained minimization problem (\ref{eq:exponential_obj}). The stopping criteria is set to $\rho_{min} = 10^{-5}$. $d_x$ and $d_f$ denote the average Euclidean distances of the approximations to the analytical solutions at early termination. $d_x^{end}$ and $d_f^{end}$ denote the average Euclidean distances of the approximations to the analytical solutions without early termination. The solution is computed $1000$ times and the average value of all outcomes is shown. $\#eval$ refers to the average number of function evaluations at early termination, whereas $\#saved$ is the average number of additional evaluations (saved evaluations) performed by the same run without early termination.}\label{tab:exponential_sum_noise}
\begin{center}\footnotesize
\renewcommand{\arraystretch}{1.3}
\begin{tabular}{|c|c||c|c|c|c|c|c|}\hline
$\delta^f_{max}$  & $\delta^c_{max}$  & $\#eval$ & $\#saved$ & $d_x$ & $d_f$ & $d_x^{end}$ & $d_f^{end}$\\\hline\hline
$10^{-2}$   & $10^{-5}$  & $ 62$&$21 $     & $6.43\cdot10^{-2}$  & $1.31\cdot10^{-1}$  & $6.22\cdot10^{-2}$  & $8.04\cdot10^{-2}$\\\hline 
$10^{-3}$   & $10^{-5}$  & $ 75$&$5 $   & $2.81\cdot10^{-2}$  & $1.63\cdot10^{-2}$  & $2.81\cdot10^{-2}$  & $1.60\cdot10^{-2}$\\\hline 
$10^{-2}$   & $10^{-4}$  & $ 60$&$11 $     & $7.51\cdot10^{-2}$  & $1.39\cdot10^{-1}$  & $7.45\cdot10^{-2}$  & $1.28\cdot10^{-1}$\\\hline 
$10^{-3}$   & $10^{-4}$  & $ 62$&$4 $   & $6.31\cdot10^{-2}$  & $9.28\cdot10^{-2}$  & $6.28\cdot10^{-2}$  & $9.00\cdot10^{-2}$\\\hline 
$10^{-3}$   & $0$        & $ 75$&$22 $    & $2.09\cdot10^{-2}$  & $9.96\cdot10^{-3}$  & $2.02\cdot10^{-2}$  & $8.32\cdot10^{-3}$\\\hline 
$0$         & $10^{-4}$  & $ 62$&$4 $   & $5.90\cdot10^{-2}$  & $8.30\cdot10^{-2}$  & $5.90\cdot10^{-2}$  & $8.24\cdot10^{-2}$\\\hline 
$0$         & $0$        & $128$&$-$      & $-$ & $-$ & $3.15 \cdot 10^{-5}$ & $1.63 \cdot 10^{-8}$\\\hline
\end{tabular}
\end{center}
\end{table}

\subsection{Schittkowski benchmark problems}\label{sec:benchmark_test}
In this section we compare the efficiency of NOWPAC with the derivative-free optimization codes COBYLA, NOMAD, SDPEN, and GSS-NLC on a broader set of benchmark problems. For this comparison we chose nonlinear constrained optimization problems from the Schittkowski benchmark problem collection for nonlinear programming \cite{Schittkowski2008}; see also \cite{Hock1981, Schittkowski1987}. The number of design variables in these problems varies between $2$ and $15$, and the number of nonlinear constraints varies between $1$ and $10$.  Results from all the optimization codes run with stopping criteria $SC = 10^{-3}$ and $SC = 10^{-5}$ are summarized in Tables \ref{tab:schittkowskiresults_1e_3} and \ref{tab:schittkowskiresults_1e_5}. In all the test problems, the exact optimal design and the optimal value are known; thus we can report the absolute error $d_x$ in the optimal designs as well as the absolute and relative errors, $d_f^{abs}$ and $d_f^{rel}$, in the objective functions. We see that in many test examples, NOWPAC performs---in terms of required function evaluations---significantly better than the other optimization codes. Moreover, in all cases, the accuracy of the computed optimal solution is better than or comparable to that of the other codes.
 
Note that in some of the test problems, the direct search methods (i.e., NOMAD, SDPEN, and GSS-NLC) find the exact solution. In our benchmark set this is the case if the initial and optimal designs have simple integer values. Nevertheless, despite the fact that these codes find the optimal solutions relatively early, they need a significant number of function evaluations to certify the optimality of the solution. These codes' fast descent of the objective function unfortunately gets lost in many other cases, particularly the moderate-dimensional examples (test problems $100$, $113$, and $285$). In contrast, NOWPAC consistently yields a good reduction of the objective even for increasing dimensionality of the optimization problem.

\begin{table}[htbp]
\caption{Comparison of performance of the optimizers NOWPAC, COBYLA, NOMAD, SDPEN, and GSS-NLC on
test problems in the Hock and Schittkowski benchmark set. TP denotes
the number of the test problem, $n$ and $r$ the number of design variables and nonlinear constraints. The absolute error in the optimal design, and the absolute and relative errors in the optimal objective values are shown in the columns $d_x$, $d_f^{abs}$, and $d_f^{rel}$ respectively. For all solvers a stopping threshold of $10^{-3}$ is used.}\label{tab:schittkowskiresults_1e_3}
\begin{center}\footnotesize
\renewcommand{\arraystretch}{1.3}
\begin{tabular}{|c|c|c||c||c|c|c|c|}\hline
TP & $n$ & $r$ & SOLVER & $\#eval$ & $d_x$ & $d_f^{abs}$ & $d_f^{rel}$\\\hline\hline
\multirow{5}{*}{$29$}&\multirow{5}{*}{$3$} &\multirow{5}{*}{$1$} & NOWPAC & $50$ & $1.0441\cdot10^{-4}$ & $3.2215\cdot10^{-7}$ & $1.4237\cdot10^{-8}$\\\cline{4-8}
& & & COBYLA & $73$ & $7.6908\cdot10^{-4}$ & $5.5365\cdot10^{-7}$ & $2.4468\cdot10^{-8}$\\\cline{4-8}
&&& NOMAD & $277$ & $2.9345\cdot10^{-3}$ & $4.0290\cdot10^{-5}$ & $1.7806\cdot10^{-6}$ \\\cline{4-8}
& & & SDPEN & $115$ & $1.3449$ & $5.8696$ & $2.5940\cdot10^{-1}$\\\cline{4-8}
&&& GSS-NLC & $221$ & $2.9625\cdot10^{-1}$ & $1.6742\cdot10^{-1}$ & $7.3989\cdot10^{-3}$ \\\hline\hline
\multirow{5}{*}{$43$}&\multirow{5}{*}{$4$} &\multirow{5}{*}{$3$}    & NOWPAC & $66$ & $9.8067\cdot10^{-6}$ & $4.4098\cdot10^{-9}$ & $1.0022\cdot10^{-10}$\\\cline{4-8}
& & & COBYLA & $79$ & $1.4830\cdot10^{-3}$ & $1.2760\cdot10^{-6}$ & $2.9000\cdot10^{-8}$\\\cline{4-8}
&&& NOMAD & $234$ & $0$ & $0$ & $0$ \\\cline{4-8}
& & & SDPEN & $174$ & $1.8322$ & $2.1379\cdot10^{1}$ & $4.8588\cdot10^{-1}$\\\cline{4-8}
&&& GSS-NLC & $1111$ & $1.0311\cdot10^{-1}$ & $5.0000\cdot10^{-2}$ & $1.1364\cdot10^{-3}$ \\\hline\hline
\multirow{5}{*}{$100$}&\multirow{5}{*}{$7$} &\multirow{5}{*}{$4$}   & NOWPAC & $155$ & $1.0563\cdot10^{-2}$ & $2.8696\cdot10^{-4}$ & $4.2161\cdot10^{-7}$\\\cline{4-8}
& & & COBYLA & $237$ & $1.3533\cdot10^{-2}$ & $1.1572\cdot10^{-3}$ & $1.7002\cdot10^{-6}$\\\cline{4-8}
&&& NOMAD & $804$ & $2.5400\cdot10^{-1}$ & $2.5474\cdot10^{-1}$ & $3.7427\cdot10^{-4}$ \\\cline{4-8}
& & & SDPEN & $251$ & $8.3322\cdot10^{-1}$ & $4.1820$ & $6.1444\cdot10^{-3}$\\\cline{4-8}
&&& GSS-NLC & $1269$ & $7.6222\cdot10^{-1}$ & $3.5699$ & $5.2451\cdot10^{-3}$ \\\hline\hline
\multirow{5}{*}{$113$}&\multirow{5}{*}{$10$} &\multirow{5}{*}{$8$}  & NOWPAC & $141$ & $7.9201\cdot10^{-4}$ & $3.2979\cdot10^{-6}$ & $1.3568\cdot10^{-7}$\\\cline{4-8}
& & & COBYLA & $335$ & $9.5021\cdot10^{-3}$ & $1.8091\cdot10^{-4}$ & $7.4429\cdot10^{-6}$\\\cline{4-8}
&&& NOMAD & $1199$ & $1.7572$ & $5.5933$ & $2.3012\cdot10^{-1}$ \\\cline{4-8}
& & & SDPEN & $392$ & $1.1885$ & $4.0313$ & $1.6586\cdot10^{-1}$\\\cline{4-8}
&&& GSS-NLC & $4193$ & $8.1851\cdot10^{-1}$ & $2.1238$ & $8.7376\cdot10^{-2}$ \\\hline\hline
\multirow{5}{*}{$227$}&\multirow{5}{*}{$2$} &\multirow{5}{*}{$2$}   & NOWPAC & $18$ & $6.5285\cdot10^{-6}$ & $8.3104\cdot10^{-6}$ & $8.3104\cdot10^{-6}$\\\cline{4-8}
& & & COBYLA & $18$ & $5.8463\cdot10^{-5}$ & $1.0860\cdot10^{-4}$ & $1.0860\cdot10^{-4}$\\\cline{4-8}
&&& NOMAD & $102$ & $0$ & $0$ & $0$ \\\cline{4-8}
& & & SDPEN & $90$ & $4.0569\cdot10^{-3}$ & $5.7538\cdot10^{-3}$ & $5.7538\cdot10^{-3}$\\\cline{4-8}
&&& GSS-NLC & $393$ & $0$ & $0$ & $0$ \\\hline\hline
\multirow{5}{*}{$228$}&\multirow{5}{*}{$2$} &\multirow{5}{*}{$2$}  & NOWPAC & $28$ & $9.8407\cdot10^{-5}$ & $1.1405\cdot10^{-8}$ & $3.8017\cdot10^{-9}$\\\cline{4-8}
& & & COBYLA & $48$ & $1.9082\cdot10^{-3}$ & $3.7562\cdot10^{-6}$ & $1.2521\cdot10^{-6}$\\\cline{4-8}
&&& NOMAD & $101$ & $0$ & $0$ & $0$ \\\cline{4-8}
& & & SDPEN & $4$ & $3.0000$ & $3.0000$ & $1.0000$\\\cline{4-8}
&&& GSS-NLC & $287$ & $0$ & $0$ & $0$ \\\hline\hline
\multirow{5}{*}{$264$}&\multirow{5}{*}{$4$} &\multirow{5}{*}{$3$}   & NOWPAC & $53$ & $1.6402\cdot10^{-4}$ & $3.0458\cdot10^{-7}$ & $6.9222\cdot10^{-9}$\\\cline{4-8}
& & & COBYLA & $77$ & $1.3421\cdot10^{-3}$ & $1.5297\cdot10^{-6}$ & $3.4765\cdot10^{-8}$\\\cline{4-8}
&&& NOMAD & $233$ & $0$ & $0$ & $0$ \\\cline{4-8}
& & & SDPEN & $174$ & $1.8322$ & $2.1379\cdot10^{1}$ & $4.8588\cdot10^{-1}$\\\cline{4-8}
&&& GSS-NLC & $1352$ & $0$ & $0$ & $0$ \\\hline\hline
\multirow{5}{*}{$285$}&\multirow{5}{*}{$15$} &\multirow{5}{*}{$10$} & NOWPAC &  $177$ & $7.8426\cdot10^{-5}$ & $1.0413\cdot10^{-5}$ & $1.2618\cdot10^{-9}$\\\cline{4-8}
& & & COBYLA & $362$ & $1.5656\cdot10^{-3}$ & $3.3056\cdot10^{-4}$ & $4.0058\cdot10^{-8}$\\\cline{4-8}
&&& NOMAD & $787$ & $3.3862\cdot10^{-1}$ & $3.6380\cdot10^{1}$ & $4.4087\cdot10^{-3}$ \\\cline{4-8}
& & & SDPEN & $650$ & $3.1240$ & $3.7707\cdot10^{3}$ & $4.5695\cdot10^{-1}$\\\cline{4-8}
&&& GSS-NLC & $4647$ & $1.2390$ & $5.9800\cdot10^{2}$ & $7.2467\cdot10^{-2}$ \\\hline
\end{tabular}
\end{center}
\end{table}
\begin{table}[htbp]
\caption{Comparison of performance of the optimizers NOWPAC, COBYLA, NOMAD, SDPEN, and GSS-NLC on
test problems in the Hock and Schittkowski benchmark set. TP denotes
the number of the test problem, $n$ and $r$ the number of design variables and nonlinear constraints. The absolute error in the optimal design and the absolute and relative errors in the optimal objective values are shown in the columns $d_x$, $d_f^{abs}$, and $d_f^{rel}$ respectively. For all solvers a stopping threshold of $10^{-5}$ is used.}\label{tab:schittkowskiresults_1e_5}
\begin{center}\footnotesize
\renewcommand{\arraystretch}{1.3}
\begin{tabular}{|c|c|c||c||c|c|c|c|}\hline
TP & $n$ & $r$ & SOLVER & $\#eval$ & $d_x$ & $d_f^{abs}$ & $d_f^{rel}$\\\hline\hline
\multirow{5}{*}{$29$}&\multirow{5}{*}{$3$} &\multirow{5}{*}{$1$} & NOWPAC & $58$ & $1.2405\cdot10^{-5}$ & $3.9934\cdot10^{-10}$ & $1.7648\cdot10^{-11}$\\\cline{4-8}
 & &  & COBYLA & $117$ & $8.7405\cdot 10^{-6}$ & $1.9876\cdot 10^{-10}$ & $8.7839\cdot 10^{-12}$\\\cline{4-8}
 &  &  & NOMAD & $623$ & $2.8503\cdot 10^{-4}$ & $1.8000\cdot 10^{-7}$  & $7.9550\cdot 10^{-9}$\\\cline{4-8}
   & &  & SDPEN   & $154$ & $1.3450$ & $5.8661$ & $2.5925\cdot 10^{-1}$\\\cline{4-8}
 &  &  & GSS-NLC & $696$ & $2.9542\cdot 10^{-1}$ & $1.5742\cdot 10^{-1}$  & $6.9569\cdot 10^{-3}$\\\hline\hline
\multirow{5}{*}{$43$}&\multirow{5}{*}{$4$} &\multirow{5}{*}{$3$} & NOWPAC & $74$ & $9.8067\cdot 10^{-6}$ & $4.4098\cdot 10^{-9}$ & $1.0022\cdot 10^{-10}$\\\cline{4-8}
 & & & COBYLA & $128$ & $5.2627\cdot 10^{-6}$ & $1.2718\cdot 10^{-9}$ & $9.1647\cdot 10^{-10}$\\\cline{4-8}
 &  & &NOMAD & $330$ & $0$ & $0$  & $0$\\\cline{4-8}
 & & & SDPEN & $228$ & $1.8317$ & $2.1367\cdot 10^{1}$ & $4.8562\cdot 10^{-1}$\\\cline{4-8}
 &  & & GSS-NLC & $2332$ & $1.0232\cdot 10^{-1}$ & $5.0000\cdot 10^{-2}$  & $1.1364\cdot 10^{-3}$\\\hline\hline
\multirow{5}{*}{$100$}&\multirow{5}{*}{$7$} &\multirow{5}{*}{$4$} & NOWPAC & $238$ & $6.7890\cdot 10^{-4}$ & $1.2721\cdot 10^{-6}$ & $1.8690\cdot 10^{-9}$\\\cline{4-8}
 & &  & COBYLA & $729$ & $5.0509\cdot 10^{-4}$ & $6.2378\cdot 10^{-7}$ & $9.1647\cdot 10^{-10}$\\\cline{4-8}
 &  &  & NOMAD & $2606$ & $2.2263\cdot 10^{-1}$ & $1.2558\cdot 10^{-1}$  & $1.8450\cdot 10^{-4}$\\\cline{4-8}
    & &  & SDPEN & $360$ & $8.3304\cdot 10^{-1}$ & $4.1351$ & $6.0753\cdot 10^{-3}$\\\cline{4-8}
 &  &  & GSS-NLC & $2769$ & $7.6247\cdot 10^{-1}$ & $3.5699$ & $5.2451\cdot 10^{-3}$ \\\hline\hline
\multirow{5}{*}{$113$}&\multirow{5}{*}{$10$} &\multirow{5}{*}{$8$} & NOWPAC & $188$ & $1.6343\cdot 10^{-4}$ & $3.4602\cdot 10^{-8}$ & $1.4236\cdot 10^{-9}$\\\cline{4-8}
 & &  & COBYLA & $635$ & $1.1470\cdot 10^{-4}$ & $2.3968\cdot 10^{-8}$ & $9.8609\cdot 10^{-10}$\\\cline{4-8}
 &  &  & NOMAD & $1715$ & $1.7512$ & $5.5030$ & $2.2640\cdot 10^{-1}$ \\\cline{4-8}
    & & & SDPEN & $531$ & $1.1885$ & $4.0045$ & $1.6475\cdot 10^{-1}$\\\cline{4-8}
 &  & & GSS-NLC & $7172$ & $8.2086\cdot 10^{-1}$ & $2.1338$ & $8.7788\cdot 10^{-2}$\\\hline\hline
\multirow{5}{*}{$227$}&\multirow{5}{*}{$2$} &\multirow{5}{*}{$2$} & NOWPAC & $31$ & $9.1139\cdot 10^{-12}$ & $1.2971\cdot 10^{-11}$ & $1.2971\cdot 10^{-11}$\\\cline{4-8}
 & & & COBYLA & $26$ & $7.0430\cdot 10^{-9}$ & $1.1950\cdot 10^{-8}$ & $1.1950\cdot 10^{-8}$\\\cline{4-8}
 &  &  & NOMAD & $158$ & $0$ & $0$  & $0$\\\cline{4-8}
    & &  & SDPEN & $130$ & $2.2254\cdot 10^{-5}$ & $3.1472\cdot 10^{-5}$ & $3.1472\cdot 10^{-5}$\\\cline{4-8}
 &  &  & GSS-NLC & $930$ & $0$ & $0$  & $0$\\\hline\hline
\multirow{5}{*}{$228$}&\multirow{5}{*}{$2$} &\multirow{5}{*}{$2$} & NOWPAC & $31$ & $9.8407\cdot 10^{-5}$ & $1.1405\cdot 10^{-8}$ & $3.8017\cdot 10^{-9}$\\\cline{4-8}
 & & & COBYLA & $67$ & $1.2070\cdot 10^{-5}$ & $1.3663\cdot 10^{-10}$ & $4.5543\cdot 10^{-11}$\\\cline{4-8}
&  & & NOMAD & $174$ & $0$ & $0$ & $0$ \\\cline{4-8}
   & &  & SDPEN & $84$ & $0$ & $0$ & $0$\\\cline{4-8}
 &  &  & GSS-NLC & $779$ & $0$ & $0$  & $0$\\\hline\hline
\multirow{5}{*}{$264$}&\multirow{5}{*}{$4$} &\multirow{5}{*}{$3$} & NOWPAC & $63$ & $5.9984\cdot 10^{-6}$ & $2.4949\cdot 10^{-10}$ & $5.6703\cdot 10^{-12}$\\\cline{4-8}
 & & & COBYLA & $135$ & $1.3461\cdot 10^{-5}$ & $1.4405\cdot 10^{-10}$ & $3.2738\cdot 10^{-12}$\\\cline{4-8}
 &  & & NOMAD & $349$ & $0$ & $0$  & $0$\\\cline{4-8}
    & & & SDPEN & $228$ & $1.8317$ & $2.1367\cdot 10^{1}$ & $4.8562\cdot 10^{-1}$\\\cline{4-8}
 &  & & GSS-NLC & $2441$ & $0$ & $0$  & $0$\\\hline\hline
\multirow{5}{*}{$285$}&\multirow{5}{*}{$15$} &\multirow{5}{*}{$10$} & NOWPAC & $209$ & $1.9381\cdot 10^{-5}$ & $3.5764\cdot 10^{-7}$ & $4.3339\cdot 10^{-11}$\\\cline{4-8}
 & & & COBYLA & $614$ & $1.6457\cdot 10^{-5}$ & $2.4447\cdot 10^{-8}$ & $2.9626\cdot 10^{-12}$\\\cline{4-8}
&  &  & NOMAD  & $1246$ & $3.3808\cdot 10^{-1}$ & $3.5863\cdot 10^{1}$  & $4.3460\cdot 10^{-3}$\\\cline{4-8}
   & &  & SDPEN  & $862$ & $3.1240$ & $3.7707\cdot 10^{3}$ & $4.5694\cdot 10^{-1}$\\\cline{4-8}
 &  &  & GSS-NLC  & $9745$ & $1.2307$ & $5.9200\cdot 10^{2}$  & $7.1740\cdot 10^{-2}$\\\hline
\end{tabular}
\end{center}
\end{table}

\subsection{Tar removal process model}\label{sec:num_results_tar_removal_process}
We now discuss optimization of a tar removal process, shown schematically in Figure \ref{fig:tar_removal_process}, which is part of the production of synthesis gas (syngas) in a biomass to liquid (BTL) plant \cite{Srinivas2013}. 
\begin{figure}[!htb]
\vspace{2mm}
\begin{center}
\includegraphics[bb=0 10 396 95, scale=.7]{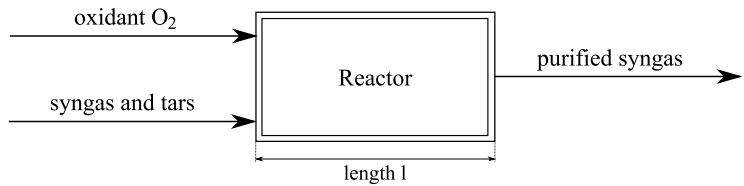}
\vspace{2mm}
\caption{Schematic of the tar removal process in syngas production. Tar-polluted syngas enters the reactor from the left together with oxygen. Chemical reactions to reduce the tars take place in the reactor, and purified syngas exits the process at right.}\label{fig:tar_removal_process}
\end{center}
\end{figure}
The objective is to maximize the flow rate of purified syngas, $\mathcal{F}_{s}$, at the outlet of the reactor by removing tar from the inlet stream as much as possible. The design parameters for the process are the length, $x_1 = l$, of the reactor and the inflow rate of oxygen, $x_2 = \mathcal{F}_{O_2}$. A call to the tar removal process simulator yields the outputs
\[
\left(\mathcal{F}_{s}(x_1, x_2), T_{out}(x_1, x_2)\right),
\]
where $T_{out}$ is the temperature of the purified syngas at the outlet. We remark that the removal of tar is implicitly achieved by maximizing $\mathcal{F}_{s}$ and is therefore not explicitly included in the objective function. The set of feasible design parameters is restricted by physical and economical constraints, as well as safety standards for the operation of the BTL plant. 
On the one hand, the length of the reactor has to be sufficiently large to contain the syngas and allow it to react. On the other hand, building too large of a reactor would result in unallowable material costs. These considerations yield the restrictions $0.5\, \text{[m]} \leq x_1 \leq 2\, \text{[m]}$ for the extent of the reactor. 
The flow rate of the oxygen at the reactor inlet must also obey constraints. A lower bound must be respected in order to sustain the reformer process, while an upper bound is again dictated from an economical perspective, to limit operational costs. We thus impose the constraints $1334375\, \text{[kmol/h]} \leq x_2 \leq 3125000\, \text{[kmol/h]}$. 
Finally, because the reactor vessel might fail when the outlet temperature $T_{out}$ exceeds a limit of $1680$ Kelvin, safety concerns impose the constraint on the temperature $T_{out} \leq 1680\, \text{[K]}$. In summary, the feasible domain $X = \{(x_1, x_2)^T \in \mathbb{R}^2 \; : \; c(x_1, x_2) \leq 0\}$ is given by the constraints 
\[
c(x_1, x_2) = \left(\begin{array}{c}
T_{out}(x_1, x_2) - 1680\\
0.5 - x_1\\
x_1 - 2\\
1334375 - x_2\\
x_2 - 3125000\\
\end{array}\right).
\]
We note that the constraints on the length of the reactor and the flow rate of the oxygen are simple box constraints. However, the constraint on the temperature is more involved since $T_{out}$ is only given by black-box evaluations of the tar removal process simulation. Overall, the constrained optimization problem can be stated as
\begin{equation}\label{eq:gov_eq_tarremovalprocess_opt}
\max\limits_{c(x_1, x_2) \leq 0} \mathcal{F}_{s}(x_1, x_2).
\end{equation}

We choose the starting point $x = (1.0, 1.5\cdot10^6)^T$ and rescale the second design parameter by $10^{-6}$ so that both design parameters are of the same order of magnitude. This scaling reduces the anisotropy of the elliptical trust region and is tailored to the design space $X$. We summarize the performances of NOWPAC, COBYLA, NOMAD, SDPEN, and GSS-NLC in Table \ref{tab:tarremoval_sum}. 
\begin{table}[htbp]
\caption{Summarized performance statistics of NOWPAC, COBYLA, NOMAD, SDPEN, and GSS-NLC in finding an optimal design for the tar removal process model. $SC$ indicates the stopping criteria, which is the $\rho_{min}$ threshold for NOWPAC and the absolute distance in the coordinate directions for COBYLA, NOMAD, SDPEN, and GSS-NLC. For NOWPAC, in parentheses behind the number of model evaluations, we report the number of evaluations saved due to early termination by the error indicator.}\label{tab:tarremoval_sum}
\begin{center}\footnotesize
\renewcommand{\arraystretch}{1.3}
\begin{tabular}{|c||c|c|c|c|}\hline
					& $SC$      & $\#eval$ & $x^\ast$                        & $f^\ast$ \\\hline\hline
NOWPAC    & $10^{-3}$ & $19$($15$)  & $(2.0000, 2.2199\cdot 10^{6})^T$ & $3.7957 \cdot 10^{3}$ \\\hline
COBYLA    & $10^{-3}$ & $24$        & $(2.0000, 2.2199\cdot 10^{6})^T$ & $3.7953 \cdot 10^{3}$ \\\hline
NOMAD     & $10^{-3}$ & $212$          & $(2.0000, 2.2199\cdot 10^{6})^T$ & $3.7957 \cdot 10^{3}$ \\\hline
SDPEN     & $10^{-3}$ & $53$        & $(2.0000, 2.2188\cdot 10^{6})^T$ & $3.7941 \cdot 10^{3}$ \\\hline
GSS-NLC      & $10^{-3}$ & $102$       & $(2.0000, 2.2198\cdot 10^{6})^T$ & $3.7956 \cdot 10^{3}$ \\\hline\hline
NOWPAC    & $10^{-4}$ & $19$($22$) & $(2.0000, 2.2199\cdot 10^{6})^T$ & $3.7957 \cdot 10^{3}$ \\\hline
COBYLA    & $10^{-4}$ & $26$        & $(2.0000, 2.2201\cdot 10^{6})^T$ & $3.7961 \cdot 10^{3}$ \\\hline
NOMAD     & $10^{-4}$ & $232$          & $(2.0000, 2.2199\cdot 10^{6})^T$ & $3.7957 \cdot 10^{3}$ \\\hline
SDPEN     & $10^{-4}$ & $63$          & $(2.0000, 2.2199 \cdot 10^{6})^T$ & $3.7959 \cdot 10^{3}$ \\\hline
GSS-NLC      & $10^{-4}$ & $197$       & $(2.0000, 2.2198\cdot 10^{6})^T$ & $3.7957 \cdot 10^{3}$ \\\hline\hline
NOWPAC    & $10^{-5}$ & $19$($27$) & $(2.0000, 2.2199\cdot 10^{6})^T$ & $3.7957 \cdot 10^{3}$ \\\hline
COBYLA    & $10^{-5}$ & $32$       & $(2.0000, 2.2199\cdot 10^{6})^T$ & $3.7957 \cdot 10^{3}$ \\\hline
NOMAD     & $10^{-5}$ & $266$      & $(2.0000, 2.2199\cdot 10^{6})^T$ & $3.7957 \cdot 10^{3}$ \\\hline
SDPEN     & $10^{-5}$ & $71$       & $(2.0000, 2.2199\cdot 10^{6})^T$ & $3.7957 \cdot 10^{3}$ \\\hline
GSS-NLC      & $10^{-5}$ & $401$   & $(2.0000, 2.2199\cdot 10^{6})^T$ & $3.7957 \cdot 10^{3}$ \\\hline
\end{tabular}
\end{center}
\end{table}
We use NOWPAC's noise indicator and early termination feature in the solution of the tar removal problem. In Table \ref{tab:tarremoval_sum} we report the number of function evaluations as well as the optimal designs and objective values at termination; we also state the number of saved function evaluations (for NOWPAC) in parentheses. NOMAD and GSS-NLC require considerably more objective function evaluations than NOWPAC, COBYLA, and SDPEN. We remark that COBYLA proposes designs that are not actually feasible, i.e., $T_{out}(x^\ast) -1680 = 7.4 \cdot 10^{-3}$ and $T_{out}(x^\ast) -1680 = 2.0 \cdot 10^{-5}$ for the stopping criteria $SC = 10^{-3}$ and $SC = 10^{-5}$ respectively. Constraint violations like this are, from a practical perspective, not desirable since they may cause failure of the whole reactor. We remark that NOMAD always proposes a feasible optimal design, whereas this is not guaranteed in SDPEN and GSS-NLC. In this example, however, both of the latter codes yield feasible solutions.

The tar removal process model is only available as a black-box simulator that exhibits irreducible numerical errors in its evaluations. Thus, as we have seen in the previous test examples, detection of inexact objective and constraint evaluations is necessary to prevent NOWPAC from performing superfluous iterations. We illustrate the errors of the black-box simulations in Figure~\ref{fig:NoisyTarRespSurfs} by plotting the output $(\mathcal{F}_S, T_{out})$ for various values of the oxygen inflow rates $\mathcal{F}_{O_2} \in [2.2198, 2.2200]$ and lengths of the reactor $l \in \{1.98, 1.99, 2.00\}$. To reveal the errors, we perform a transformation of the outputs; specifically, we subtract the approximated affine part of the solution. In Figure \ref{fig:norm_Hessians_tar}, we plot the norms of the Hessians of the objective function model and the first constraint model for a stopping criterion of $\rho_{min} = 10^{-5}$. We see that the error indicator marks iterations as non-convergent (with circles) as soon as the Hessian norms rise due to the inexact function evaluations. This shows the effectiveness of the error indicator.

%

\begin{figure}[htbp]
\psfrag{Tag1}[t][t][.85]{$\mathcal{F}_{O_2}$}
\psfrag{Tag2}[b][c][.85]{$\mathcal{F}_{S}$}
\hspace{10mm}
\begin{center}
\includegraphics[bb= 96   250   515   553, scale=.40]{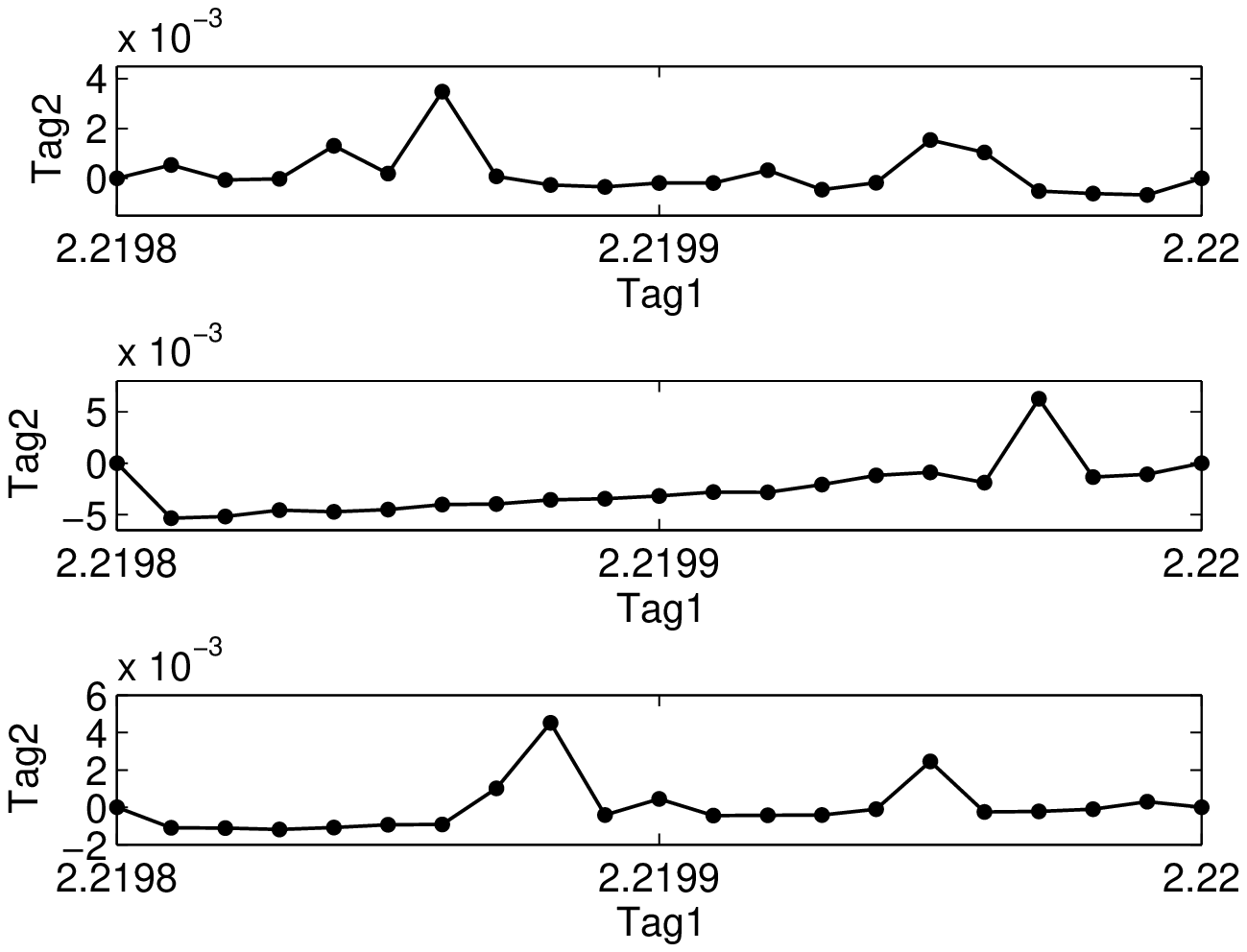}
\psfrag{Tag2}[b][c][.85]{$T_{out}$}
\includegraphics[bb= 96   250   515   553, scale=.40]{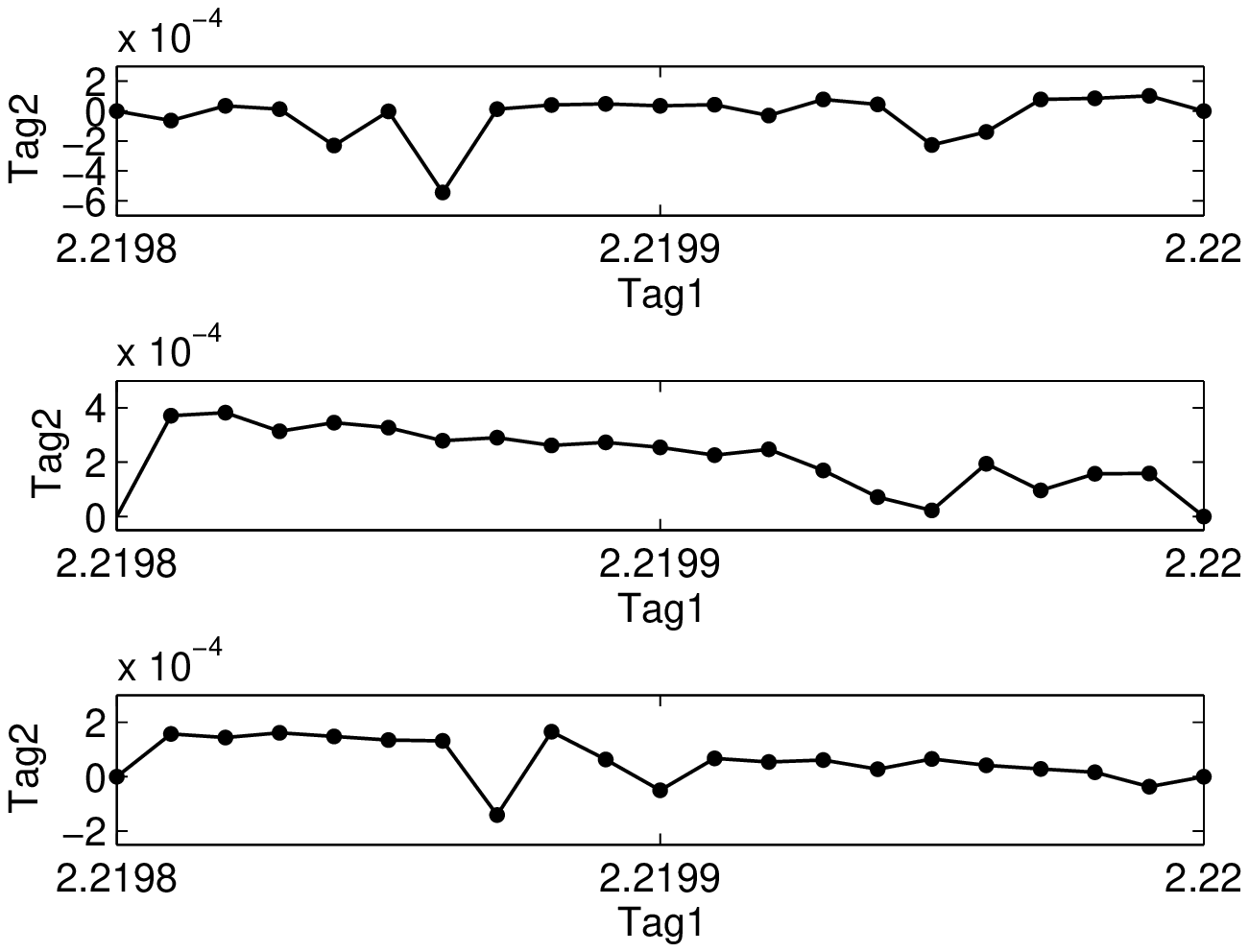}
\caption{Affinely-transformed evaluations of the flowrate of syngas $\mathcal{F}_{S}$ (left) and reactor temperature $T_{out}$ (right) with varying 
oxygen inflow $\mathcal{F}_{O_2} \in [2.2198, 2.2200]$ at reactor lengths of $l = 1.98$ (top), $l = 1.99$ (middle), $l = 2.00$ (bottom).}\label{fig:NoisyTarRespSurfs}
\end{center}
\end{figure}

\begin{figure}[htbp]
\begin{center}
\includegraphics[width=64mm]{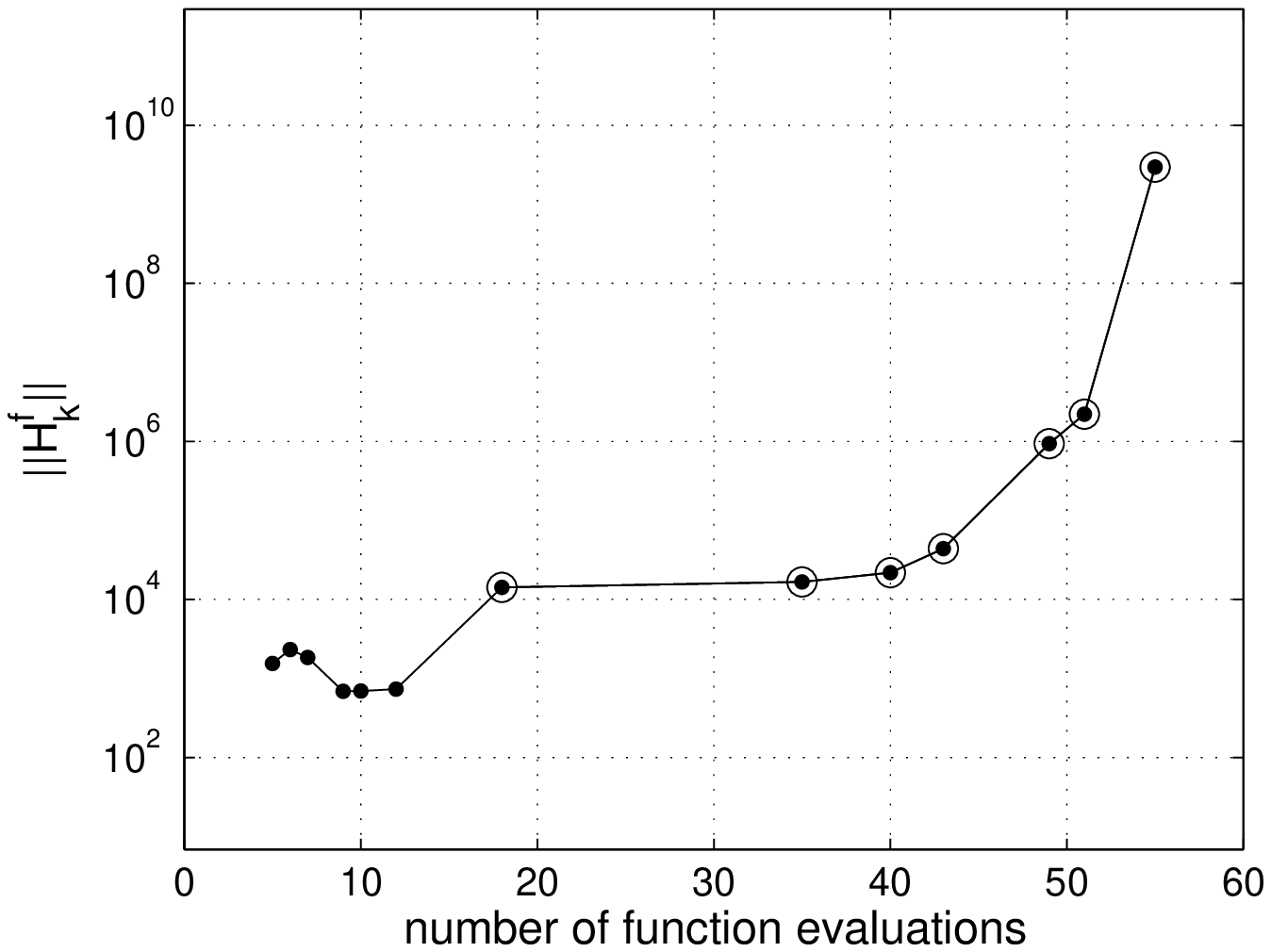}
\includegraphics[width=64mm]{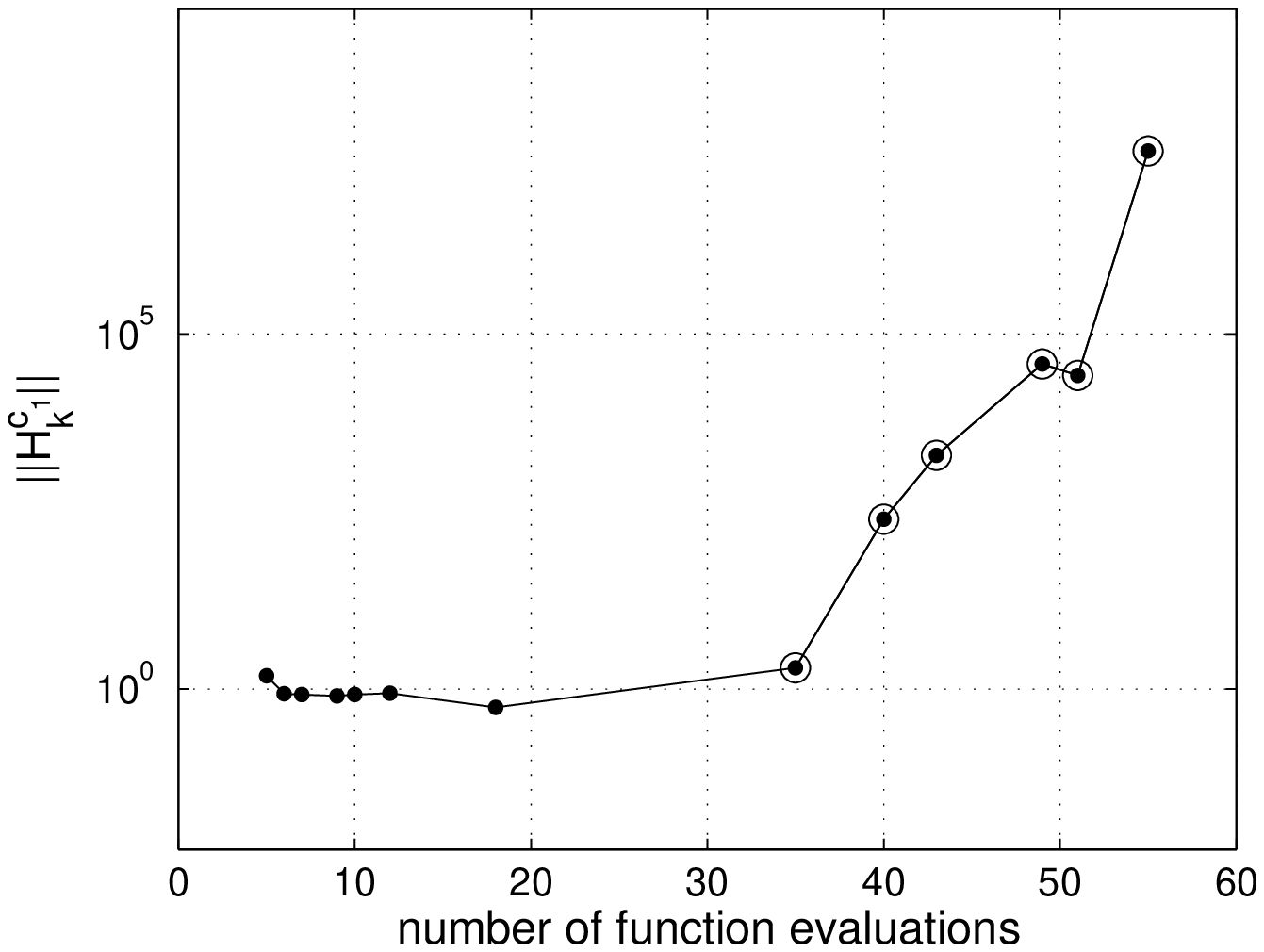}
\caption{Norms of the Hessians of the models of the tar removal process for the objective (left) and the temperature constraint (right). Circles indicate the non-convergent regime.}\label{fig:norm_Hessians_tar}
\end{center}
\end{figure}

\section{Conclusions}\label{sec:conclusions}
This paper has presented a derivative-free trust region method for constrained nonlinear optimization. The method generalizes the work of Conn \textit{et.\ al} \cite{Conn1993, Conn2009a} to handle general black-box constraints without the need for derivative information. We provide a rigorous proof of convergence of the method to first-order critical points. 
This result, given in Section~\ref{sec:convergence_proof}, assumes that evaluations of the objective function and constraints are sufficiently accurate for the trust region surrogate models to satisfy the conditions in (\ref{eq:fully_linear_equations}). In many practical applications, however, where evaluations of the objective function and the constraints are obtained via calls to a black-box simulation, we may only have access to inexact evaluations whose accuracy cannot be tuned. These inaccuracies in the evaluations may corrupt the full linearity properties (\ref{eq:fully_linear_equations}) of the models $m_k^f$ and $\{m_k^{c_i}\}_{i=1}^r$.
In Section \ref{sec:noisy_function_evaluations}, we therefore derive an asymptotic bound on the decay rate of the errors, with respect to the trust region radii, to guarantee convergence. This theoretical analysis leads to the introduction of a error indicator $\tau$, based on the norms of successive model Hessians. The indicator can be used to terminate NOWPAC iterations once they enter a regime where level of inaccuracy in the evaluations of the objective function and constraints is expected to impede further progress of the algorithm.

We note that the termination approach proposed in this paper is local and perhaps conservative. One might encounter situations in which the objective function or the constraints possess regions where the possible local descent is of the order of the accuracy level of the evaluations, yet after passing through this region, significant descent again becomes possible. In these situations, a less conservative behavior of the error indicator would be desirable. We therefore suggest the option of terminating NOWPAC only after a user-prescribed number of non-convergent iterations, essentially to allow for local randomized exploration in flat areas of the objective function.

Since NOWPAC is solely based on evaluations of the objective function and the constraints, it is applicable to a broad class of optimization problems for which no derivative information is available. Moreover, it is guaranteed to converge to first-order critical points without incurring errors due to approximations of the feasible domain. We emphasize that Algorithm \ref{alg:algorithm1} is a skeleton procedure, wherein the user can chose the most appropriate methods for the computation of the trial step and the most suitable approximation method for the surrogates $m_k^f$ and $\{m_k^{c_i}\}_{i=1}^r$.

In future work, we will explore application of the NOWPAC framework to problems in stochastic optimization. As we discussed briefly in Section \ref{sec:noisy_function_evaluations}, in stochastic programming the objective function and/or constraints are often replaced by averages or other measures of variability or risk associated with a lack of knowledge (see, e.g., \cite{Beyer2007, Rockafellar2002}). These quantities are often estimated using sampling strategies that exhibit uncorrelated errors between neighboring designs, and these errors are thus perfectly detectable by our error indicator $\tau$. Our future work may therefore extend the concept of the error indicator into a feedback scheme for adaptively adjusting the accuracy of the objective/constraint evaluations (e.g., choosing the number of samples in a Monte Carlo approximation) to save computational costs while still guaranteeing convergence.


\section*{Acknowledgements}
This work was supported by BP under the BP-MIT Conversion Research Program. The authors wish to thank their colleagues from the MIT Energy Initiative for providing their simulation code for the tar removal process. The authors also thank Dr.\ A.\ Conn and Dr.\ L.\ Horesh from the IBM Thomas J.\ Watson Research Center for helpful discussions and comments on this paper. Moreover, the first author wants to thank Prof.\ P.\ Rentrop, Technische Universit{\"a}t M{\"u}nchen, for an interesting discussion on surrogate models for the trust region subproblems. The authors also thank two anonymous referees for valuable comments and suggestions.

\begin{appendix}

\section{Auxiliary lemmas}
In this appendix we provide auxiliary results that justify Assumptions \ref{ass:envelope_of_fully_linear_models} and \ref{ass:convexification_through_ibp}, in order to complete the proofs in the main sections of this paper. First, Lemma \ref{lem:minFrobeniusnormModelsSatisfyAssumpion2.1} shows that minimum-Frobenius-norm models satisfy Assumption \ref{ass:envelope_of_fully_linear_models}. Lemma \ref{lem:local_convexification} shows that local convexification of the feasible domain is possible by choosing a sufficiently large inner boundary path constant $\varepsilon_b > 0$. This results allows us, in  Lemma \ref{lem:upper_lower_semicontinuity}, to prove continuity of the criticality measures used in Lemma \ref{lem:quality_of_approx_criticality_measure}. Lemma \ref{lem:big_A_and_approx_crit_lemma_relation} establishes a relationship between the approximated criticality measure $\alpha_k(\rho_k)$ and its extended version $\mathfrak{A}_2[x_k]$. 
Thereafter we state, for use in Theorem \ref{thm:overall_convergence_of_exact_criticality meausre}, Corollary \ref{cor:technical_corollary}, which is a direct consequence of statements in the proof of  Lemma \ref{lem:quality_of_approx_criticality_measure}. Finally we present Lemma \ref{lem:completion_of_noise_lemma}, which complements the proof of Theorem \ref{thm:error_decrease_rate}.

\medskip


\begin{lemma}[Existence of a bounding function satisfying Assumption~\ref{ass:envelope_of_fully_linear_models} for minimum-Frobenius-norm models]
\label{lem:minFrobeniusnormModelsSatisfyAssumpion2.1}
Let $m_x^c$ be a quadratic minimum-Frobenius-norm model of the constraint $c$. A bounding function satisfying  Assumption \ref{ass:envelope_of_fully_linear_models} is given by 
\[
b_c(s; x, \rho) = \|s\|\kappa_{dc}\rho + \left\{\begin{array}{lll} 
\left(\frac12 \kappa_{bh} + \nu^c\right) \|s\|^2 & \quad \text{for} \quad & \|s\| \leq \rho,\\[1mm]
\bar{b}_c(s; x, \rho)& \quad \text{else} & \\
\end{array}\right.
\] 
with 
\begin{align*}
\bar{b}_c(s; x, \rho) := & 
\left(\kappa_{dc} + \frac12 \kappa_{bh} + \nu^c\right) \rho^2 P_0(\|s\|) +
 \left(\kappa_{dc}+2\left(\nu^c+ \frac12 \kappa_{bh}\right)\right)\rho (1-\rho) P_1(\|s\|)  + \\
& \left( \left(\kappa_{dc} + \frac12 \kappa_{bh} + \nu^c\right)\rho^2 + \bar{\kappa}_{\lambda_1}(1-\rho)\rho\right) P_3(\|s\|) +\\ 
&\left(\kappa_{dc} + 2\left(\nu^c+ \frac12 \kappa_{bh}\right)\right) \rho^2 (1-\rho) P_4(\|s\|),
\end{align*}
where $\bar{\kappa}_{\lambda_1} > 0 $, $\kappa_{dc}$ is the constant from the fully linear property (\ref{eq:fully_linear_dc}), and $\nu^c$ denotes the Lipschitz constant of the gradient of $c$. The functions $P_0, \ldots, P_4$ denote the cubic spline interpolation basis.
\end{lemma}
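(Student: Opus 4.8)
\emph{Proof proposal.} The plan is to split the statement into two essentially independent tasks. First, the pointwise bound $m_x^c(x+s)\le c(x+s)+b_c(s;x,\rho)$, which is only required on $B(0,\rho)$, I would obtain by a short Taylor estimate. Second, the structural properties demanded of $b_c$ on all of $B(0,1)$ — continuity in $(x,\rho)$, $b_c(0;x,\rho)=0$, the bound $b_c\le\kappa_{\lambda_1}\rho$, $C^1$-smoothness with Lipschitz gradient, and convexity on $B(0,\rho)$ — I would verify by checking that the cubic-Hermite extension $\bar b_c$ has been assembled to glue correctly; this is where essentially all the work sits.

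For the first task I would use that a quadratic minimum-Frobenius-norm model interpolates its underlying function at the center of the sample set, so $m_x^c(x)=c(x)$ and $m_x^c(x+s)=c(x)+\langle g_x^c,s\rangle+\tfrac12 s^\top H_x^c s$. Expanding $c$ to first order with integral remainder gives $c(x+s)=c(x)+\langle\nabla c(x),s\rangle+R(s)$ with $|R(s)|\le\tfrac12\nu^c\|s\|^2$ by the Lipschitz continuity of $\nabla c$ from Assumption~\ref{ass:general_assumptions}(b). Subtracting,
\[
m_x^c(x+s)-c(x+s)=\langle g_x^c-\nabla c(x),s\rangle+\tfrac12 s^\top H_x^c s-R(s),
\]
and I would bound the first term by $\kappa_{dc}\rho\|s\|$ (Cauchy--Schwarz together with (\ref{eq:fully_linear_dc}) evaluated at $s=0$, i.e.\ $\|g_x^c-\nabla c(x)\|\le\kappa_{dc}\rho$), the second by $\tfrac12\kappa_{bh}\|s\|^2$ (Assumption~\ref{ass:bound_on_model_Hessian}), and $-R(s)$ by $\nu^c\|s\|^2$. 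The sum is exactly $\kappa_{dc}\rho\|s\|+(\tfrac12\kappa_{bh}+\nu^c)\|s\|^2=b_c(s;x,\rho)$ for $\|s\|\le\rho$.

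For the second task I would exploit that the inequality need not hold outside $B(0,\rho)$, so $\bar b_c$ is merely a vehicle for extending the radial profile $t\mapsto\kappa_{dc}\rho\,t+(\tfrac12\kappa_{bh}+\nu^c)t^2$, $t=\|s\|$, from $[0,\rho]$ to $[0,1]$ with $C^1$ regularity and size $O(\rho)$. Using the cubic spline basis $P_0,\dots,P_4$ of the statement on the knots $t=\rho$ and $t=1$, the displayed coefficients are exactly the Hermite data of this extension — the value $(\kappa_{dc}+\tfrac12\kappa_{bh}+\nu^c)\rho^2$ and slope $(\kappa_{dc}+2(\nu^c+\tfrac12\kappa_{bh}))\rho$ of the inner profile at $t=\rho$, and a value $\le\bar\kappa_{\lambda_1}$ with vanishing slope at $t=1$ — scaled by the corresponding basis weights, which makes $b_c$ match to first order across $\|s\|=\rho$ and $C^1$ up to $\|s\|=1$. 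I would then dispatch the remaining items: $b_c(0;x,\rho)=0$ since both the $\|s\|$- and $\|s\|^2$-terms vanish; convexity on $B(0,\rho)$ because there $b_c$ is a nonnegative combination of the convex functions $\|s\|$ and $\|s\|^2$; continuity in $(x,\rho)$ because $b_c$ in fact does not depend on $x$ (only the fixed constants $\kappa_{dc},\kappa_{bh},\nu^c$ and the free parameter $\bar\kappa_{\lambda_1}$ enter) and all factors and spline functions depend continuously on $\rho$ and agree at $\|s\|=\rho$; and the bound $b_c\le\kappa_{\lambda_1}\rho$ because on $B(0,\rho)$ one has $b_c\le(\kappa_{dc}+\tfrac12\kappa_{bh}+\nu^c)\rho^2\le(\kappa_{dc}+\tfrac12\kappa_{bh}+\nu^c)\rho$ (as $\rho\le1$), while on the annulus each summand of $\bar b_c$ carries one of the factors $\rho^2,\rho(1-\rho),\rho^2(1-\rho)$ — all $\le\rho$ — times a spline basis function uniformly bounded on $[0,1]$, and the prefactor obeys $\kappa_{dc}\rho\|s\|\le\kappa_{dc}\rho$.

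The main obstacle is the $C^1$-smoothness with Lipschitz gradient on $B(0,1)$: one must carry the explicit spline coefficients through the gluing so that the first radial derivatives genuinely match at both knots and — the truly delicate point — so that the Lipschitz constant of $\nabla b_c$ (equivalently, a bound on the second radial derivative of the cubic on $[\rho,1]$) does not blow up as $\rho\downarrow0$, even though the interpolation interval keeps fixed length while the data imposed at $t=\rho$ collapse onto those at $t=0$. Away from the center $\|s\|$ is smooth, and at the center $b_c$ is convex and vanishes, so no separate smoothing argument is needed there; everything else is routine Taylor and norm bookkeeping.
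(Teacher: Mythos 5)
Your proposal is correct and follows essentially the same route as the paper: derive the quadratic bound $\|s\|\kappa_{dc}\rho+(\tfrac12\kappa_{bh}+\nu^c)\|s\|^2$ on $B(0,\rho)$ from the interpolation property at the center, a first-order Taylor remainder controlled by the Lipschitz constant $\nu^c$, the fully linear gradient bound (\ref{eq:fully_linear_dc}), and Assumption~\ref{ass:bound_on_model_Hessian}, and then treat $\bar b_c$ as a cubic Hermite extension of the radial profile to $B(0,1)$. The only (immaterial) difference is that you apply the gradient-error bound at the center $s=0$ with a forward expansion of $c$ about $x$, whereas the paper expands $c(x)$ backward about $x+s$ and uses $\|\nabla c(x+s)-\nabla m_x^c(x+s)\|\le\kappa_{dc}\rho$ there; both yield the identical bounding function, and your treatment of the gluing and of the remaining properties of Assumption~\ref{ass:envelope_of_fully_linear_models} is at least as detailed as the paper's.
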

\begin{proof}
Let $x_k \in X$ and $\rho_k \in \;]0, \,\rho_{max}]$ be arbitrary and consider the approximation error $\varepsilon_c(x_k+s) := c(x_k+s) - m_k^c(x_k+s)$. Adding $c(x_k) - m_k^c(x_k) = 0$ (note that $m_k^c$ interpolates $c$ at the points $x_k$) results in 
\begin{align*}
\varepsilon_c(x_k+s) + c(x_k) - c(x_k+s) & = m_k^c(x_k) - m_k^c(x_k+s)
 = -s^T g_k^c - \frac12 s^T H_k^c s \\
&=  -s^T \left(g_k^c + H_k^c s\right) + \frac12 s^T H_k^c s
 = -s^T \nabla m_k^c(x_k+s) + \frac12 s^T H_k^c s.
\end{align*}
A Taylor expansion of $c(x_k)$ around $x_k + s$,
\[
c(x_k) = c(x_k+s-s) = c(x_k+s) - s^T\nabla c(x_k+s) - s^T \left(\nabla c(x_k + \theta s ) - \nabla c(x_k+s)\right),
\]
for some $\theta \in \; ]0, 1[$, yields
\[
\varepsilon_c(x_k+s) - s^T\nabla c(x_k+s) - s^T \left(\nabla c(x_k + \theta s ) - \nabla c(x_k+s)\right)
= -s^T \nabla m_k^c(x_k+s) + \frac12 s^T H_k^c s,
\]
i.e.,
\begin{align*}
\varepsilon_c(x_k+s) &= -
s^T \nabla m_k^c(x_k+s) + \frac12 s^T H_k^c s + s^T\nabla c(x_k+s) 
+ s^T \left(\nabla c(x_k + \theta s ) - \nabla c(x_k+ s)\right)\\
&= s^T \left(\nabla c(x_k+s) - \nabla m_k^c(x_k + s) \right) + \frac12 s^T H_k^c s + s^T \left(\nabla c(x_k + \theta s ) - \nabla c(x_k+ s)\right).\\
\end{align*}
It follows that
\[
\left| \varepsilon_c(x_k+s) \right| \leq \|s\|\kappa_{dc}\rho_k + 
\left(\frac12\kappa_{bh} + \nu^c(1-\theta)\right)
\|s\|^2,
\]
where $\nu^c$ is the Lipschitz constant of the gradient of $c$. Since $(x_k, \rho_k)$ was chosen arbitrarily in $X \times [0, \rho_{max}]$, the bounding function restricted to $B(0, \rho)$ is given by
\[
b_c(s; x, \rho) = \|s\|\kappa_{dc}\rho + \left(\frac12 \kappa_{bh} + \nu^c\right) \|s\|^2.
\] 
We see that $b_c$ is convex and radial symmetric in $s$, as well as constant in $x$. If  $\rho < 1$, we use polynomial Hermite interpolation to smoothly extend the bounding function to $B(0, 1)$ such that $b_c$ is continuous in $(s; x, \rho) \in B(0, 1) \times X \times [0, 1]$.
\end{proof}

\medskip


\begin{lemma}\label{lem:local_convexification}
Let $c : \mathbb{R}^n \rightarrow \mathbb{R}$ be a continuously differentiable function with Lipschitz continuous gradient on $\mathcal{L} \subset \mathbb{R}^n$. For every $x \in \mathcal{L}$ with $c(x) \leq 0$ and $\rho > 0$ there exists an $\varepsilon_b > 0$ such that $X_{(x,\rho)}^-$, $X_{(x,\rho)}^{+}$, along with $X_x^{ibp}$ and $X_x \cap B(x, 1)$ for all $(x, \rho) \in X \times [0, \rho_{max}]$, are strictly convex sets.
\end{lemma}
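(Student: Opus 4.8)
The plan is to observe that all four sets have the common shape
\[
\{\,x+d \;:\; \phi_\psi(d)\le 0\,\}\cap B(x,1),\qquad \phi_\psi(d):=\psi(d)+\varepsilon_b\|d\|^{s},\quad s:=\tfrac{2}{1+p},
\]
where $s\in\;]1,2[$ because $p\in\;]0,1[$, and $\psi$ is, in the four cases, $c(x+\cdot)$ (for $X_x^{ibp}$), $M_x^c(x+\cdot)$ (for $X_x$), $c(x+\cdot)+b_c(\,\cdot\,;x,\rho)$ (for $X_{(x,\rho)}^{+}$), or $c(x+\cdot)-\kappa_{\lambda_3}\rho$ (for $X_{(x,\rho)}^{-}$). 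In each case $\psi$ is continuously differentiable on $B(0,1)$ with $\psi(0)\le 0$, and $\nabla\psi$ is Lipschitz on $B(0,1)$ with a constant bounded by some $\bar\kappa>0$ that is uniform over $(x,\rho)\in X\times[0,\rho_{max}]$: this uses Assumption \ref{ass:general_assumptions}(b) for $c$, the $C^{1,1}$ hypothesis imposed on $M_x^c$, the regularity postulated for $b_c$ in Assumption \ref{ass:envelope_of_fully_linear_models}, and the bound $\kappa_{bh}$ on model Hessians from Assumption \ref{ass:bound_on_model_Hessian}. The case $\rho=0$ collapses $X_{(x,\rho)}^{\pm}$ onto $X_x^{ibp}$ and is thus subsumed. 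So it suffices to pick $\varepsilon_b$ so that $\{d\in B(0,1):\phi_\psi(d)\le 0\}$ is strictly convex for every such $\psi$.

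The key point is that the inner boundary path injects a uniformly positive amount of curvature on $B(0,1)$ that swamps the bounded curvature of $\psi$. First I would record that, for $d\neq 0$,
\[
\nabla^{2}\!\left(\|d\|^{s}\right)=s\|d\|^{s-2}\!\left(I+(s-2)\frac{dd^{T}}{\|d\|^{2}}\right),
\]
whose smallest eigenvalue equals $s(s-1)\|d\|^{s-2}$; since $1<s<2$ one has $s(s-1)>0$ and $s-2<0$, so $\|d\|^{s-2}\ge 1$ on $B(0,1)$ and this smallest eigenvalue is $\ge s(s-1)$ there. Equivalently, for any $0<\lambda<\varepsilon_b s(s-1)$ the scalar map $t\mapsto\varepsilon_b t^{s}-\tfrac{\lambda}{2}t^{2}$ is nondecreasing and convex on $[0,1]$, hence $d\mapsto\varepsilon_b\|d\|^{s}-\tfrac{\lambda}{2}\|d\|^{2}$ is convex on $B(0,1)$. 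Since $\nabla\psi$ is $\bar\kappa$-Lipschitz on $B(0,1)$, the map $d\mapsto\psi(d)+\tfrac{\bar\kappa}{2}\|d\|^{2}$ is convex there. Fixing $\varepsilon_b>(\bar\kappa+1)/\big(s(s-1)\big)$ and taking $\lambda=\bar\kappa+1$, adding these two facts gives
\[
\phi_\psi(d)-\tfrac12\|d\|^{2}=\Big(\psi(d)+\tfrac{\bar\kappa}{2}\|d\|^{2}\Big)+\Big(\varepsilon_b\|d\|^{s}-\tfrac{\bar\kappa+1}{2}\|d\|^{2}\Big),
\]
which is convex on $B(0,1)$; that is, $\phi_\psi$ is $1$-strongly convex on $B(0,1)$, and $\phi_\psi\in C^{1}(B(0,1))$ with $\nabla\phi_\psi(0)=\nabla\psi(0)$ since $s>1$.

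Strong convexity of $\phi_\psi$ on $B(0,1)$ then yields strict convexity of $\{d\in B(0,1):\phi_\psi(d)\le 0\}$: for distinct $d_1,d_2$ in this set and $t\in\;]0,1[$ one gets $\phi_\psi\big(td_1+(1-t)d_2\big)\le-\tfrac{t(1-t)}{2}\|d_1-d_2\|^{2}<0$, while strict convexity of the Euclidean ball puts $td_1+(1-t)d_2$ in the interior of $B(0,1)$, so by continuity the midpoint lies in the interior of the set. It remains to check that the set has nonempty interior, so that strict convexity is not vacuous. For $X_x^{ibp}$ this is the argument already used in the proof of Lemma \ref{lem:exact_criticality_measure}: if $c(x)<0$ then $d=0$ is interior, and if $c(x)=0$ then Assumption \ref{ass:general_assumptions}(c) gives $\nabla c(x)\neq 0$ while $\nabla(\|d\|^{s})$ vanishes at $0$, so $\{\langle\nabla c(x),d\rangle<0\}$ meets the set (this is exactly where the superlinear vanishing exponent $\tfrac{2}{1+p}<2$ and Assumption \ref{ass:general_assumptions}(e) enter); for $X_{(x,\rho)}^{-}\supseteq X_x^{ibp}$ the claim is immediate, and $X_{(x,\rho)}^{+}$ and $X_x$ are treated the same way once $\varepsilon_b$ is fixed as above. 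Taking $\varepsilon_b$ to be the largest of the thresholds obtained for the four choices of $\psi$ proves the lemma.

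The step I expect to require the most care is the claim that a single curvature bound $\bar\kappa$ works uniformly over all $(x,\rho)\in X\times[0,\rho_{max}]$ — in particular, extracting a common Lipschitz constant for $\nabla b_c(\,\cdot\,;x,\rho)$ and for $\nabla M_x^{c}$ from the pointwise-in-$(x,\rho)$ regularity statements, which one does by combining those statements with the compactness of $X\times[0,1]$ and the continuity in $(x,\rho)$ asserted in Assumption \ref{ass:envelope_of_fully_linear_models}. The remaining ingredients — the Hessian computation for $\|d\|^{s}$, the $C^{1,1}$-implies-semiconvex estimate for $\psi$, and the passage from strong convexity to strict convexity of a sublevel set — are routine.
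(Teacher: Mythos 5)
Your proof is correct and follows essentially the same route as the paper's: both rest on the observation that the inner boundary path contributes curvature bounded below by $\varepsilon_b\, s(s-1)\|d\|^{s-2}\ge \varepsilon_b\, s(s-1)$ on the unit ball (the paper phrases this as a Taylor remainder $R\in\Omega(\|\eta-\xi\|^2)$ dominating the $\mathcal{O}(\|\eta-\xi\|^2)$ remainder of the Lipschitz-gradient constraint), and both then take the maximum of the resulting thresholds over the four sets. Your version is somewhat more explicit — giving a concrete threshold $\varepsilon_b>(\bar\kappa+1)/\bigl(s(s-1)\bigr)$, spelling out the passage from strong convexity of the defining function to strict convexity of the sublevel set, and correctly flagging the uniformity of $\bar\kappa$ over $(x,\rho)$ as the point requiring care — but the underlying argument is the same.
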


\begin{proof}
We first show the existence of a large enough $\varepsilon_{b,1} > 0$ such that $X_x^{ibp}$ is strictly convex. The first-order Taylor approximation of the inner boundary path $h_x(\eta)$, $\eta \in B(x,1)$ around $\xi \in B(x, 1)$ results in
\[
h_x(\eta) - h_x(\xi) = \left\langle \nabla h_x(\xi), \eta - \xi \right\rangle + \varepsilon_{b,1} R(\xi, \eta)
\]
with non-negative $R(\xi, \eta) \in \Omega(\|\eta-\xi\|^2)$. Moreover, due to the constraints $c$ being continuously differentiable with Lipschitz continuous gradient, there exists a function $T(\xi, \eta)$ such that
\[
c(\eta) - c(\xi) = \left\langle \nabla c(\xi), \eta-\xi\right\rangle + T(\xi, \eta)
\]
with $T(\xi, \eta) \in \mathcal{O}(\|\eta-\xi\|^2)$. Thus we obtain
\[
c(\eta) + h_x(\eta) - c(\xi) + h_x(\xi) = \left\langle \nabla c(\xi) + \nabla h_x(\xi), \eta-\xi\right\rangle + T(\xi, \eta) + \varepsilon_{b,1} R(\xi, \eta).
\]
Now, choosing $\varepsilon_{b,1} > 0$ such that $T(\xi, \eta) + \varepsilon_{b,1} R(\xi, \eta) > 0$ results in
\[
c(\eta) + h_x(\eta) - c(\xi) + h_x(\xi) > \left\langle \nabla c(\xi) + \nabla h_x(\xi), \eta-\xi\right\rangle,
\]
which shows the strict convexity of $X_x^{ibp}$. 
The existence of an $\varepsilon_{b,2} > 0$ such that $X_x \cap B(x_x, 1)$ is strictly convex can be shown analogously by replacing the constraints $c$ with $M_x^c$, i.e., the extended surrogate models  (\ref{eq:cont_convex_extension_of_constraint_models}), which have the same smoothness properties as the constraints $c$ themselves. Moreover, since the bounding function $b_c$ is convex, replacing $c$ by $c + b_c$ in the above proof yields the existence of a constant $\varepsilon_{b,+} > 0$ such that $X_{(x,\rho)}^{+}$ is strictly convex. 
For the same choice, $\varepsilon_{b,-} = \varepsilon_{b,+}$, the outer approximation $X_{(x, \rho)}^{-}$ is also strictly convex. This immediately follows by noting that the offset $\kappa_{\lambda_1}\rho$ to the constraints in $X_{(x, \rho)}^-$ is constant in $x$, and thus does not affect the strict convexity of the set.
The assertion of the lemma now follows with $\varepsilon_b := \max\{\varepsilon_{b,1}, \varepsilon_{b,2}, \varepsilon_{b,+}\}$.
\end{proof}
\medskip


\begin{lemma}\label{lem:upper_lower_semicontinuity}
The set $X_x^{ibp}$ as well as the inner and outer approximations $X_{(x, \rho)}^{\pm}$ are upper and lower semi-continuous in $(x, \rho)$ on the domain $X \times [0, \rho_{max}]$. Moreover, the criticality measure $\mathfrak{A}[x]$ as well as the lower and upper bounds $\mathfrak{A}_1^{\pm}[x,\rho]$ are continuous on $X$ and $X \times [0, \rho_{max}]$ respectively.
\end{lemma}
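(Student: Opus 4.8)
The plan is to prove the continuity claims in three stages, reducing everything to a parametric stability argument for the optimal value of a linear program over a convex set that varies continuously with the parameters. First I would establish the semicontinuity of the set-valued maps $(x,\rho) \mapsto X_x^{ibp}$ and $(x,\rho)\mapsto X_{(x,\rho)}^{\pm}$. For upper semicontinuity, I would take a sequence $(x_n,\rho_n)\to(x,\rho)$ and points $x_n+d_n \in X_{(x_n,\rho_n)}^{\pm}$ with $d_n\to d$; since $c$, $h_x$, and $b_c$ are continuous in all their arguments (Assumption~\ref{ass:envelope_of_fully_linear_models} gives continuity of $b_{c_i}$ in $(x,\rho)$, and $h_x$ depends continuously on its center), the defining inequality passes to the limit, so $x+d$ lies in the limiting set. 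For lower semicontinuity I would use the strict convexity guaranteed by Lemma~\ref{lem:local_convexification} together with the nonempty interior established in the proof of Lemma~\ref{lem:exact_criticality_measure} (the inner boundary path vanishes superlinearly, so the center is always an interior point): given $x+d$ in the limiting set, a Slater-type argument lets me perturb $d$ slightly inward, and then for $n$ large the perturbed point is feasible for $X_{(x_n,\rho_n)}^{\pm}$, after which I let the perturbation shrink.

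Second, with both semicontinuities in hand, the set-valued maps are continuous (in the Painlevé–Kuratowski / Hausdorff sense on the relevant bounded region, using that everything is intersected with $B(x,1)$ and that $\mathcal{L}$ is compact by Assumption~\ref{ass:general_assumptions}(a)). I would then invoke a standard maximum-theorem-style argument: the objective $(x,d)\mapsto \langle \nabla f(x), d\rangle$ is jointly continuous (using continuity of $\nabla f$ on $\mathcal L$, Assumption~\ref{ass:general_assumptions}(b)), the feasible correspondence is continuous with compact nonempty values, hence the optimal value $\min_{x+d\in X_{(x,\rho)}^{\pm}}\langle\nabla f(x),d\rangle$ is continuous in $(x,\rho)$; taking absolute values preserves continuity, giving continuity of $\mathfrak{A}_1^{\pm}[x,\rho]$ on $X\times[0,\rho_{max}]$. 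The same argument applied to the correspondence $(x)\mapsto X_x^{ibp}$ — noting $X_{(x,0)}^{+}=X_{(x,0)}^{-}=X_x^{ibp}$, so continuity of $X_x^{ibp}$ in $x$ follows from the $\rho$-parametrized statement at $\rho=0$ — yields continuity of $\mathfrak{A}[x]$ on $X$.

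The main obstacle I anticipate is the lower semicontinuity of the feasible correspondences, specifically handling boundary points of the sets where no interior perturbation direction is immediately available, and the behavior as $\rho\to 0$ where $X_x$ degenerates. Here strict convexity (Lemma~\ref{lem:local_convexification}) is essential: it rules out flat boundary pieces and lets me always find a strictly feasible nearby point by moving toward the (nonempty) interior, so that a first-order expansion of $c+h_x\pm(\text{offset})$ shows the perturbed point survives small changes in $(x,\rho)$. A secondary technical point is verifying that the maximum-theorem argument goes through uniformly on the compact parameter region rather than merely pointwise; since all feasible sets are contained in the fixed ball $B(x,1)$ and shift continuously with $x$, a change of variables to $d$-coordinates makes the correspondence take values in a fixed compact set, and the classical Berge maximum theorem applies directly. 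Assumption~\ref{ass:general_assumptions}(e), which forbids touching active constraints, is what guarantees the interior of $X_x^{ibp}$ stays nonempty uniformly, preventing the correspondence from collapsing and thereby securing lower semicontinuity at $\rho=0$ as well.
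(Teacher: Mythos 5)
Your proposal is correct and follows essentially the same route as the paper: establish upper and lower semicontinuity of the feasible-set correspondences (using continuity of $c$, $h_x$, $b_c$ in all arguments, the strict convexity from Lemma~\ref{lem:local_convexification}, the nonempty interior guaranteed by Assumptions~\ref{ass:general_assumptions}(c,e), and uniform compactness inside $B(x,1)$), and then invoke a maximum-theorem argument for the continuity of the optimal values $\mathfrak{A}_1^{\pm}$ and $\mathfrak{A}$. The only cosmetic difference is that the paper outsources the semicontinuity of the constraint sets to \cite[Thm.~2.9]{Fiacco1990} and the value-function continuity to \cite[Thm.~2.1]{Fiacco1990}, whereas you prove the semicontinuity directly by a closed-graph argument and a Slater-point perturbation, which is precisely what those cited results encapsulate.
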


\begin{proof}
First we discuss the continuity of 
\[
\mathfrak{A}_1^{+}[x, \rho] = -\min\limits_{\xi \in X_{(x,\rho)}^{+}} \left\langle \nabla f(x), \xi - x\right\rangle
\]
in $X \times [0, \rho_{max}]$. For this we use \cite[Thm. 2.1]{Fiacco1990} and remark that the objective $-\left\langle \nabla f(x), \, \xi - x\right\rangle$ is continuous in $(\xi, x, \rho)$. Note that we included $\rho$ for completeness; since the objective is constant in $\rho$, it is trivially continuous with respect to this variable. Thus, we have to show upper and lower semi-continuity of the feasible set 
\[
X_{(x,\rho)}^+ = \{x+d \; : \; c(x+d) + h_x(x+d) + b_c(d;x,\rho) \leq 0\} \cap B(x, 1).
\]
Note that $X_{(x,\rho)}^+ \subseteq B(x,1)$. Let $N(\bar{x}, \bar{\rho})$ be a bounded neighborhood of $(\bar{x}, \bar{\rho})\in X \times [0, \rho_{max}]$. Then there exists a finite bound $d_x > 0$ such that $\|x_1-x_2\| \leq d_{\bar{x}}$ for all $x_1$, $x_2 \in N(\bar{x}, \bar{\rho})$. It therefore holds that  
\[
\bigcup\limits_{(x,\rho)\in N(\bar{x}, \bar{\rho})} X_{(x, \rho)}^+ \subseteq
B(\bar{x}, 1 + d_{\bar{x}}),
\]
i.e., $X^+$ is uniformly compact; see \cite[p. 217]{Fiacco1990}. Moreover, since $\mathcal{L}$ is compact (see Assumption~\ref{ass:general_assumptions}(a)) there exists a ball $B(0, d_{\mathcal{L}}) \subset \mathcal{L}$ such that 
\[
X_{(\bar{x}, \bar{\rho})}^+ = \{\xi \in B(0, d_{\mathcal{L}})  \;: \; 
g(\xi; x, \rho) \leq 0, \; i = 0 \ldots  r\},
\]
with the strictly convex continuous functions $g_0(\xi; x, \rho) := \|\xi-x\|^2 -1$ and
$g_i(\xi; x, \rho) := c(\xi) + b_c(\xi-x; x, \rho) + h_x(\xi)$, $i = 1 \ldots r$; see Assumption~\ref{ass:convexification_through_ibp}. It now follows from \cite[Thm. 2.9]{Fiacco1990} that $X^+$ is open and closed at every $(x, \rho) \in X \times [0, \rho_{max}]$, which in turn implies lower and upper semi-continuity of $X^+$ relative to $X \times [0, \rho_{max}]$; see \cite[p. 217]{Fiacco1990}. Finally, \cite[Thm. 2.1]{Fiacco1990} yields the continuity of $\mathfrak{A}_1^{+}[x, \rho]$ in $X \times [0, \rho_{max}]$.

The continuity of $\mathfrak{A}^-[x, \rho]$ and $\mathfrak{A}[x]$ follows analogously by considering the functions $g_i(\xi; x, \rho) := c(\xi) - \kappa_{\lambda_3}\rho + h_x(\xi)$ and $g_i(\xi; x, \rho) := c(\xi) + h_x(\xi)$ for $i = 1 \ldots r$, respectively.
\end{proof}

\medskip


\begin{lemma}\label{lem:big_A_and_approx_crit_lemma_relation}
If $X_k$ is convex and $x_k \in X_k$, with the corresponding trust region radius $\rho_k$ defined by Algorithm \ref{alg:algorithm1}, then $\mathfrak{A}_2[x_k] \leq \alpha_k(\rho_k)$ for all $0 < \rho_k < 1$.
\end{lemma}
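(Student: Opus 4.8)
The plan is to exploit the convexity of $X_k$ together with the fact that $x_k\in X_k$ in order to ``shrink'' a minimizer of the unit-ball subproblem into an admissible point for the $\rho_k$-ball subproblem. Concretely, I would first let $s^\ast$ denote a solution of the subproblem defining $\mathfrak{A}_2[x_k]$, so that $x_k+s^\ast\in X_k$, $\|s^\ast\|\le 1$, and $\langle g_k^f,s^\ast\rangle=\min_{x_k+s\in X_k,\ \|s\|\le 1}\langle g_k^f,s\rangle$. Since $s=0$ is feasible for this subproblem (because $x_k\in X_k$), the optimal value is nonpositive, and hence $\mathfrak{A}_2[x_k]=-\langle g_k^f,s^\ast\rangle$.

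Next I would set $d:=\rho_k s^\ast$ and verify that it is feasible for the subproblem defining $\alpha_k(\rho_k)$. The norm bound is immediate: $\|d\|=\rho_k\|s^\ast\|\le\rho_k$. For the inclusion $x_k+d\in X_k$, write
\[
x_k+\rho_k s^\ast=(1-\rho_k)\,x_k+\rho_k\,(x_k+s^\ast),
\]
which, since $0<\rho_k<1$, is a convex combination of the two points $x_k\in X_k$ and $x_k+s^\ast\in X_k$; convexity of $X_k$ then gives $x_k+d\in X_k$.

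Finally, I would use $d$ as a feasible competitor in the $\alpha_k$ subproblem:
\[
\min_{\substack{x_k+d'\in X_k\\ \|d'\|\le\rho_k}}\langle g_k^f,d'\rangle
\;\le\;\langle g_k^f,\rho_k s^\ast\rangle
\;=\;-\rho_k\,\mathfrak{A}_2[x_k]\;\le\;0,
\]
so that taking absolute values reverses the sign and yields
$\bigl|\min_{x_k+d'\in X_k,\ \|d'\|\le\rho_k}\langle g_k^f,d'\rangle\bigr|\ge \rho_k\,\mathfrak{A}_2[x_k]$; dividing by $\rho_k>0$ gives exactly $\alpha_k(\rho_k)\ge\mathfrak{A}_2[x_k]$, as claimed.

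This argument is essentially the standard comparison between criticality measures evaluated on nested trust regions (as in \cite[Lem.\ 10.6]{Conn2009a}), so I do not anticipate any genuine obstacle; the only points requiring care are (i) checking that the scaled point $x_k+\rho_k s^\ast$ stays in $X_k$, which is where convexity of $X_k$ and the hypothesis $x_k\in X_k$ are both essential and where the restriction $\rho_k<1$ enters, and (ii) the bookkeeping of signs and absolute values, since both subproblems have nonpositive optimal value.
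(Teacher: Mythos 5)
Your proof is correct, but it takes a genuinely different and considerably more economical route than the paper's. You take the minimizer $s^\ast$ of the $\mathfrak{A}_2$ subproblem and radially scale it, observing that $x_k+\rho_k s^\ast=(1-\rho_k)x_k+\rho_k(x_k+s^\ast)$ is a convex combination of two points of $X_k$ and hence feasible for the $\alpha_k$ subproblem; comparing objective values and dividing by $\rho_k$ finishes the argument in a few lines. The paper instead starts from the minimizer $\hat d_k$ of the $\alpha_k$ subproblem and performs a three-case analysis on where $x_k+\hat d_k$ sits (only the trust-region constraint active; $\hat d_k$ in the normal cone $\mathcal{N}_k$ to the active model constraints; both constraint and trust-region boundaries active), with the third case requiring a comparison of angles between $-g_k^f$, $\hat d_k$, and the solution $t_k$ of a linearized subproblem, together with the auxiliary cones $\mathcal{S}_k(\nu)$ and $\mathcal{U}_k$. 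Both arguments use the convexity hypothesis in an essential way -- you use it for the convex-combination step, the paper uses it in Case 3 to conclude $X_k\subseteq\mathcal{T}_k$ -- and both use $x_k\in X_k$ to get nonpositivity of the optimal values so that the absolute values can be handled cleanly. What your approach buys is brevity and robustness: it avoids the delicate geometric reasoning about normal cones and angles entirely, and it is the standard monotonicity-in-the-radius argument for scaled criticality measures on convex feasible sets. What the paper's case analysis buys, arguably, is explicit geometric insight into how the active constraints and the trust region interact at the minimizer, but that insight is not needed for the inequality itself. I see no gap in your argument.
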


\begin{proof}
Let us recall the definitions of the intermediate and approximated criticality measures
\[
\mathfrak{A}_2[x_k] = \left| \min\limits_{\substack{x_k + s \in X_k \\ \|s\|\leq 1}} \left\langle g_k^f, s \right\rangle\right| \quad \mbox{and} \quad
\alpha_k(\rho_k) = \frac{1}{\rho_k} \left| \min\limits_{\substack{x_k + d \in X_k \\ \|d\| \leq \rho_k}} \left\langle g_k^f, d \right\rangle\right|.
\]
First we note that in the trivial case of $g_k^f = 0$, $\mathfrak{A}_2[x_k] = \alpha_k(\rho_k) = 0$. From here on, we therefore assume that $g_k^f \neq 0$. 
We denote the optimal solutions of the criticality subproblems by
\[
\hat{s}_k := \argmin\limits_{\substack{x_k + s \in X_k \\ \|s\| \leq 1}} \left\langle g_k^f, s \right\rangle \quad \mbox{and} \quad
\hat{d}_k := \argmin\limits_{\substack{x_k + d \in X_k \\ \|d\| \leq \rho_k}} \left\langle g_k^f, d \right\rangle.
\]
Also, let $\mathcal{N}_k$ denote the normal cone to all constraints $m_k^c + h_k$ that are active in $x_k + \hat{d}_k$. 
We proceed by looking at the three possible positions of the optimal solution $x_k + \hat{d}_k$ individually. Case $1$ addresses situations where $x_k + \hat{d}_k$ lies on the trust region boundary, but not on the boundary of $X_k$. Case $2$ addresses situations where $x_k + \hat{d}_k$ lies on the boundary of  $X_k$ but not on the trust region boundary. Case $3$ considers the situation where $x_k + \hat{d}_k$ is constrained both by the boundary of $X_k$ and by the trust region boundary.
\medskip

\begin{description}
\item[Case 1:] $m_k^c(x_k + \hat{d}_k) + h_k(x_k + \hat{d}_k) < 0$:\\ 
In this case only the trust region constraint is active. Thus, $\|\hat{d}_k\| = \rho_k$ and $\hat{d}_k$ is aligned with $-g_k^f$, yielding 
\begin{equation}\label{eq:gradient_equality_for_alpha}
\alpha_k(\rho_k) = -\frac{1}{\rho_k} \left\langle g_k^f, \hat{d}_k \right\rangle =\frac{\left\langle g_k^f, g_k^f \right\rangle}{\|g_k^f\|} = \|g_k^f\|.
\end{equation}
On the other hand we have
\[
\mathfrak{A}_k[x_k] = -\min\limits_{\substack{x_k + s \in X_k \\ \|s\| \leq 1}} \left\langle g_k^f, s \right\rangle \leq
-\min\limits_{x_k + s \in B(x_k, 1)} \left\langle g_k^f, s \right\rangle = \|g_k^f\| = \alpha_k(\rho_k),
\]
where we used (\ref{eq:gradient_equality_for_alpha}) in the last equality.\\
\item[Case 2:] $\hat{d}_k \in \mathcal{N}_k$:\\
In this case $\hat{d}_k$ lies within the normal cone to the boundary of $X_k$. Moreover, relaxing the trust region constraint $\|d\| \leq \rho_k$ to $\|d\| \leq 1$ has no influence on $\hat{d}_k$, i.e., $\hat{s}_k = \hat{d}_k$. Thus, from the definition of $\mathfrak{A}_2[x_k]$ and the approximated criticality measure $\alpha_k(\rho_k)$, we have $\mathfrak{A}_2[x_k] = \rho_k \alpha_k(\rho_k) \leq \alpha_k(\rho_k)$.\\
\item[Case 3:] $\hat{d}_k \not\in \mathcal{N}_k$, $\mathcal{N}_k \neq \{0\}$:\\
First note that $\mathcal{N}_k \neq \{0\}$ implies that $x_k + \hat{d}_k$ does not lie within the interior of $X_k$. Moreover, since $\hat{d}_k \not\in \mathcal{N}_k$, at least one constraint {\it and} the trust region constraint are active at $x_k + \hat{d}_k$. Thus $\|\hat{d}_k\| = \rho_k$ and $\rho_k^{-1}\hat{d}_k$ is a vector of unit length. Since $\hat{d}_k$ is an optimal solution of the criticality subproblem for $\alpha_k$, the negative gradient $-g_k^f$ lies within the normal cone $\mathcal{C}_k$ to the overall set $X_k \cap B(x_k, \rho_k)$ at the point $x_k + \hat{d}_k$, i.e., it can be written as a convex combination of elements in $\mathcal{N}_k$ and $\hat{d}_k$, i.e., $\mathcal{C}_k = \mbox{conv}(\{\hat{d}_k\} \cup \mathcal{N}_k)$. In Figure \ref{fig:angle_relation}.1 the cone $\mathcal{C}_k$ is depicted as a pyramidal cone bounded by the gray cone $\mathcal{N}_k$ and the two white triangular faces. We decompose the gradient $g_k^f$ into two components $g_k^f = g_k^{\mathcal{N}} + g_k^{\mathcal{N}{\perp}}$, where $g_k^{\mathcal{N}} \in \mathcal{N}_k$ and $g_k^{\mathcal{N}{\perp}}$ is orthogonal to $ \mathcal{N}_k$. Note that $g_k^{\mathcal{N}} \neq 0$ and $g_k^{\mathcal{N}^{\perp}} \neq 0$ since $\hat{d}_k \not\in \mathcal{N}_k$ and $g_k^f \in \mathcal{C}_k$.
We denote the set of indices of constraints that are active in $x_k + \hat{d}_k$ by
\[
\mathcal{I}_k := \left\{i \;:\; c_i\left(x_k + \hat{d}_k\right) = 0\right\}.
\]
Consider the linearized feasible domain $\mathcal{T}_k$ defined by the set of points where the linearizations of all constraints that are active in $x_k + \hat{d}_k$ are less or equal than zero. 
Due to the linearity of $\mathcal{T}_k$, the normal cone at any point on the boundary of $\mathcal{T}_k$ is equal to $\mathcal{N}_k$. Now set
\[
t_k(\Delta) := \argmin\limits_{\substack{x_k + t \in \mathcal{T}_k \\ \|t\| \leq \Delta} } \left\langle g_k^f, t \right\rangle,
\]
$\Delta \in [\rho_k, 1]$, and denote the line going through the points $x_k + \hat{d}_k$ and $x_k + t_k(1)$ by
\[
\mathcal{L}_k := \left\{ x \in \mathbb{R}^n \;:\; x = x_k + \hat{d}_k + \sigma \left(t_k(1) - \hat{d}_k\right),\; \sigma \in \mathbb{R}\right\}.
\]
Note that the line $\mathcal{L}_k$ is aligned with $g_k^{\mathcal{N}^{\perp}}$ and lies on the boundary of the linearized domain $\mathcal{T}_k$, since $g_k^{\mathcal{N}} \neq 0$. 
It holds that $t_k(\rho_k) = \hat{d}_k$, and the vector $t_k := t_k(1)$ is the solution of the approximated criticality subproblem subject to the linearized constraints at the point $x_k + \hat{d}_k$, intersected with $B(x_k, 1)$---i.e., subject to the constraint set $\mathcal{T}_k \cap B(x_k, 1)$. Since the line $\mathcal{L}_k$ is aligned with the gradient component $g_k^{\mathcal{N}^{\perp}}$, which in turn is orthogonal to the normal cone $\mathcal{N}_k$, the line  $\mathcal{L}_k$ is itself orthogonal to $\mathcal{N}_k$.

\begin{figure}
\label{fig:angle_relation}
\psfrag{T1}[c][lc][.8]{$x_{k}$}
\psfrag{T2}[bl][l][.8]{$x_{k} + \hat{d}_k$}
\psfrag{T3}[l][l][.8]{$x_{k} + t_k$}
\psfrag{T4}[tc][br][.8]{$\{x_k\} + \mathcal{N}_k$}
\psfrag{T5}[c][cl][.8]{$\mathcal{L}_k$}
\begin{center}
\includegraphics[bb=96   238   516   553, scale=0.43]{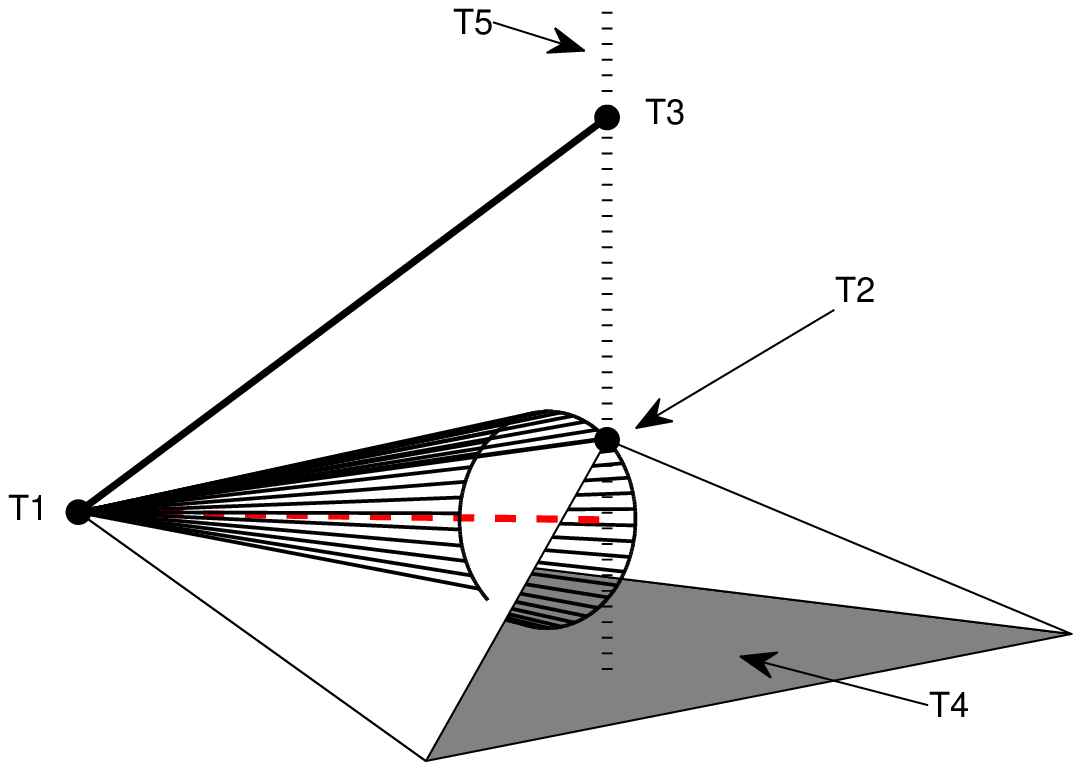}
\includegraphics[bb=96   238   516   553, scale=0.43]{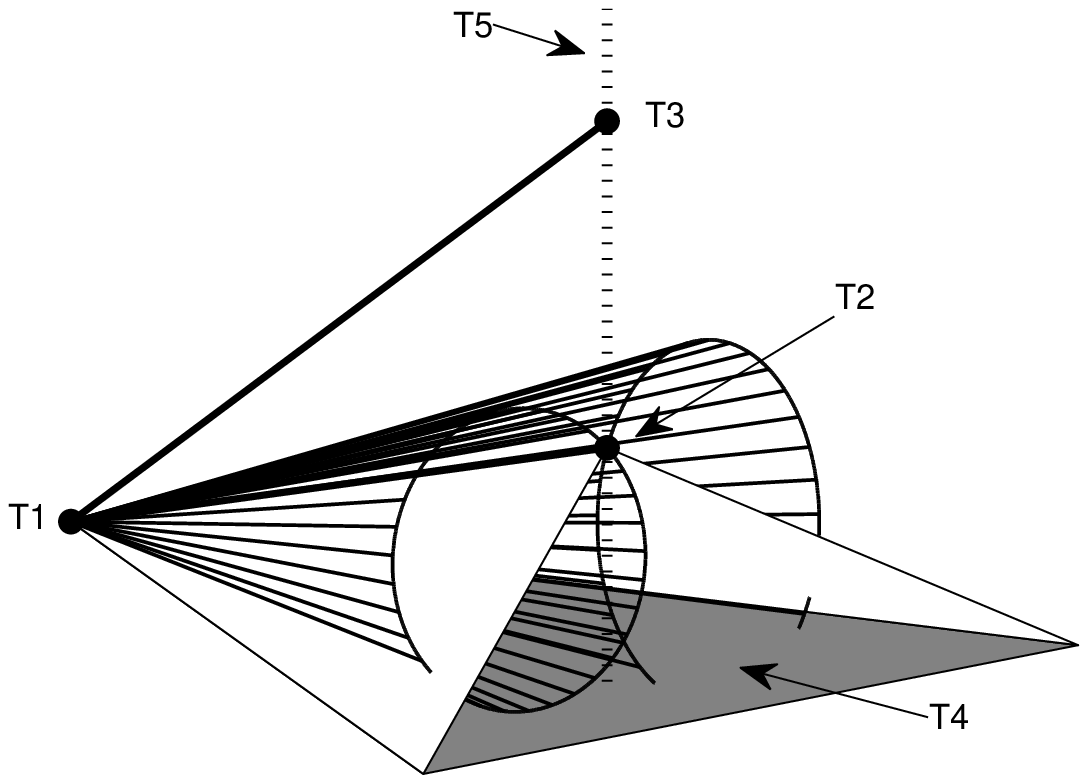}
\caption{Illustration of the (shifted) normal cone $\{x_k\} + \mathcal{N}_k$ (grey area, half hidden underneath the other two faces of $\mathcal{C}_k$) along with $\mathcal{L}_k$ (vertical dotted line). The dashed red line represents one vector $\nu \in \mathcal{C}_n$ and the circular cone $\mathcal{S}_k(\nu)$ surrounding it contains all vectors that enclose an angle with $\nu$ that is less than or equal to $\phi_{d, \nu}$. Right: two circular cones $\mathcal{S}_k(\nu_1)$ and $\mathcal{S}_k(\nu_1)$, $\nu_1, \nu_2 \in \mathcal{C}_n$.}
\end{center}
\end{figure}

\smallskip
Now we will show that the angle $\phi_t$ between $t_k$ and $-g_k^f$ is greater than the angle $\phi_d$ between $\hat{d}_k$ and $-g_k^f$, i.e., $\phi_t \geq \phi_d$. More generally, we denote the angle between $\nu \in \mathcal{C}_n$ and $\hat{d}_k$ by $\phi_{d,\nu}$. For every vector $\nu \in \mathcal{C}_n$ (i.e., for every possible direction of $-g_k^f$) we consider the cylindrical cone $S_k(\nu)$ of all vectors in $\mathbb{R}^n$ that enclose an angle with $\nu$ equal to or smaller than $\phi_{d,\nu}$; see Figure \ref{fig:angle_relation}.1 (left) for an illustration. We define
\[
\mathcal{U}_k := \bigcup\limits_{\nu \in \mathcal{C}_n} \mathcal{S}_k(\nu).
\]
as the set of all vectors $v$ for which there exists a vector $\nu \in \mathcal{C}_n$ such that the angle between $v$ and $\nu$ is smaller than the angle between $\hat{d}_k$ and $\nu$, cf.\ Figure \ref{fig:angle_relation}.1 (right). Since $t_k \not\in \mathcal{C}_n$ and $\mathcal{L}_k$ is orthogonal to $\mathcal{N}_k$ it follows that $t_k \not\in \mathcal{U}_k$. In particular it holds for the special choice of $\nu = -g_k^f \in \mathcal{C}_n$ that $\phi_t > \phi_d$.

\smallskip
We now interpret the inner products in the criticality subproblems as cosines between the corresponding vectors, i.e.,
\[
\rho_k\|g_k^f\|\cos(\phi_d) = -\left\langle g_k^f, \hat{d}_k\right\rangle \quad \mbox{and} \quad
\|t_k\|\|g_k^f\|\cos(\phi_t) = -\left\langle g_k^f, t_k\right\rangle,
\]
and conclude that
\begin{align*}
\alpha_k(\rho_k) &= -\frac{1}{\rho_k}\left\langle g_k^f, \hat{d}_k\right\rangle
=  \frac{1}{\rho_k}\rho_k\|g_k^f\|\cos(\phi_d)\\
&\geq \|t_k\|\|g_k^f\|\cos(\phi_t)
 = -\min_{x_k + t \in \mathcal{T}(x_k + \hat{d}_k)} \left\langle g_k^f, t\right\rangle
\geq \mathfrak{A}_2[x_k].
\end{align*}
In the first inequality we used $\|t_k\| \leq 1$ and $\phi_t \geq \phi_d$, whereas in the last inequality we used the fact that $X_k$ is convex, i.e., $X_k \subseteq \mathcal{T}_k$. 
%
\end{description}
\medskip
In summary, we see that in all cases the approximate criticality measure dominates $\mathfrak{A}_2[x_k]$, yielding the assertion of the lemma.
\end{proof}

\medskip

The assertion of the following corollary can be directly found in the proof of Lemma \ref{lem:quality_of_approx_criticality_measure}. However, for better readability, we formulate these results in a separate statement since they are also important for the proof of Theorem \ref{thm:overall_convergence_of_exact_criticality meausre}.

\medskip

\begin{corollary}\label{cor:technical_corollary}
Consider the sequence of points $\{x_k\}_k \subset X$ and the associated trust region radii $\{\rho_k\}_k$ with $\rho_k \rightarrow 0$. For every $\varepsilon > 0$ it holds that 
\[\left| \mathfrak{A}[x_k] - \mathfrak{A}_1[x_k] \right| < \varepsilon \quad
\mbox{and}\quad
\left| \mathfrak{A}_1[x_k] - \mathfrak{A}_2[x_k] \right| < \varepsilon
\]
for $k$ sufficiently large.
\end{corollary}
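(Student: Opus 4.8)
The statement is, almost literally, a repackaging of two intermediate estimates that already appear inside the proof of Lemma~\ref{lem:quality_of_approx_criticality_measure}; the only change is that there the smallness of $\rho_k$ was deduced from $\alpha_k(\rho_k)\to 0$ via {\tt STEP 1}, whereas here $\rho_k\to 0$ is assumed outright. So the plan is to isolate those two estimates and observe that each is dominated by a quantity that vanishes as $\rho_k\to 0$.

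For the first inequality I would argue exactly as in Lemma~\ref{lem:quality_of_approx_criticality_measure}: from the defining inclusions $X_{(x_k,\rho_k)}^{+}\subseteq X_k\subseteq X_{(x_k,\rho_k)}^{-}$ (choice of $\kappa_{\lambda_3}$ and of $b_c$) and the fact that minimizing a fixed linear functional over a larger set gives a smaller value, one obtains $\mathfrak{A}_1^{+}[x_k,\rho_k]\le \mathfrak{A}_1[x_k]\le \mathfrak{A}_1^{-}[x_k,\rho_k]$. Since $X_{(x_k,0)}^{+}=X_{(x_k,0)}^{-}=X_k^{ibp}$, we have $\mathfrak{A}_1^{\pm}[x_k,0]=\mathfrak{A}[x_k]$. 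By Lemma~\ref{lem:upper_lower_semicontinuity} the maps $\mathfrak{A}_1^{\pm}$ are continuous on $X\times[0,\rho_{max}]$, hence uniformly continuous on the compact set $\mathcal{L}\times[0,\rho_{max}]$ (Assumption~\ref{ass:general_assumptions}(a)), which contains all pairs $(x_k,\rho_k)$. Because $\rho_k\to 0$, this yields $|\mathfrak{A}_1^{\pm}[x_k,\rho_k]-\mathfrak{A}[x_k]|\to 0$, and squeezing through the displayed chain gives $|\mathfrak{A}[x_k]-\mathfrak{A}_1[x_k]|<\varepsilon$ for all large $k$.

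For the second inequality I would reuse the bound $|\mathfrak{A}_1[x_k]-\mathfrak{A}_2[x_k]|\le \kappa_{df}\rho_k$ derived in Lemma~\ref{lem:quality_of_approx_criticality_measure} by following \cite[Lem.~3.5]{Conn1993}: writing $s_k$ and $\hat s_k$ for the minimizers of $\langle\nabla f(x_k),\cdot\rangle$ and $\langle g_k^f,\cdot\rangle$ over $X_k\cap B(x_k,1)$, the Cauchy--Schwarz inequality, the estimates $\|s_k\|,\|\hat s_k\|\le 1$, the full-linearity property~(\ref{eq:fully_linear_df}) giving $\|\nabla f(x_k)-g_k^f\|\le\kappa_{df}\rho_k$, and the optimality of each minimizer for its own subproblem together yield the claim. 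As in Lemma~\ref{lem:quality_of_approx_criticality_measure}, this uses that $m_k^f$ is fully linear on $B(x_k,\rho_k)$, which holds for the algorithm's iterates at the indices where the corollary is invoked ({\tt STEPS 1}, {\tt 3}, {\tt 6} enforce full linearity after any radius reduction). Since $\rho_k\to 0$ we have $\kappa_{df}\rho_k<\varepsilon$ for all large $k$, completing the proof.

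The only point requiring genuine care is the uniformity in the first part: pointwise continuity of $\mathfrak{A}_1^{\pm}$ at each individual $x$ would not suffice, because the base point $x_k$ moves along the sequence, so it is essential to pass to uniform continuity on $\mathcal{L}\times[0,\rho_{max}]$ using compactness of $\mathcal{L}$, which is what lets $\rho_k\to 0$ act uniformly in $k$. Everything else is bookkeeping already carried out in the cited proofs.
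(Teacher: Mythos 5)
Your proposal is correct and follows essentially the same route as the paper, whose own proof of this corollary simply points back to the two estimates established inside the proof of Lemma~\ref{lem:quality_of_approx_criticality_measure} (the sandwich via $X_{(x_k,\rho_k)}^{\pm}$ together with continuity of $\mathfrak{A}_1^{\pm}$, and the $\kappa_{df}\rho_k$ bound for $|\mathfrak{A}_1-\mathfrak{A}_2|$). Your explicit appeal to uniform continuity on the compact set $\mathcal{L}\times[0,\rho_{max}]$ to handle the moving base point $x_k$ is a careful justification of a step the paper leaves implicit, but it is a refinement of the same argument rather than a different one.
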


\begin{proof}
Let us recall the definitions of the intermediate criticality measures $\mathfrak{A}_1$ and $\mathfrak{A}_2$ as given in Lemma \ref{lem:quality_of_approx_criticality_measure},
\[
\mathfrak{A}_1[x_k] = \left|\min\limits_{\substack{x_k + s \in X_k \\ \|s\|\leq 1}} \left\langle \nabla f(x_k), s \right\rangle\right|\quad \mbox{and} \quad
\mathfrak{A}_2[x_k] = \left|\min\limits_{\substack{x_k + s \in X_k \\ \|s\|\leq 1}} \left\langle g_k^f, s \right\rangle\right|.
\]
The first assertion of the corollary follows directly from the first part of the proof of Lemma \ref{lem:quality_of_approx_criticality_measure} where we showed that $\lim\limits_{k\rightarrow 0}\left| \mathfrak{A}[x_k] - \mathfrak{A}_1[x_k] \right| = 0$ if $\rho_k \rightarrow 0$. The second assertion directly follows from the second part of the proof of Lemma \ref{lem:quality_of_approx_criticality_measure}, where we showed that $\lim\limits_{k\rightarrow 0}\left| \mathfrak{A}_1[x_k] - \mathfrak{A}_2[x_k] \right| = 0$ if $\rho_k \rightarrow 0$.
\end{proof}

\medskip

Finally we state Lemma \ref{lem:completion_of_noise_lemma}, which completes the proof of Theorem \ref{thm:error_decrease_rate}. It essentially follows directly from the proof of \cite{Conn2009a}[Lemma 5.4]; however, we want to elaborate on the changes required to yield the particular assertion in which we are interested.

\medskip

\begin{lemma}\label{lem:completion_of_noise_lemma}
Under the assumptions of Theorem \ref{thm:error_decrease_rate} it holds that
\begin{align*}
\left\|\nabla f(x_k + s)  - \nabla f_{n,k}(x_k + s) \right\|  & \in o(\rho_k) \quad \mbox{and}\\
\left\|\nabla c(x_k + s)  - \nabla c_{n,k}(x_k + s) \right\|  & \in o(\rho_k).
\end{align*}
\end{lemma}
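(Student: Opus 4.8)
The plan is to reduce the statement to a bound on the minimum-Frobenius-norm models of the error terms alone. By construction $f_{n,k} = f + \delta_k^f$ and $c_{n,k} = c + \delta_k^c$, so
\[
\nabla f(x_k+s) - \nabla f_{n,k}(x_k+s) = -\nabla\delta_k^f(x_k+s), \qquad
\nabla c(x_k+s) - \nabla c_{n,k}(x_k+s) = -\nabla\delta_k^c(x_k+s),
\]
and it suffices to prove that $\|\nabla\delta_k^f(x_k+s)\| \in o(\rho_k)$ and $\|\nabla\delta_k^c(x_k+s)\| \in o(\rho_k)$ uniformly for $x_k+s \in B(x_k,\rho_k)$. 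I treat $\delta_k^f$; the constraint case is identical with $\delta_{k,max}^c$ in place of $\delta_{k,max}^f$.

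First I would reinterpret $\delta_k^f$ as a minimum-Frobenius-norm model of the constant zero function, built from interpolation data — the observed errors at the sample points — that deviate from zero by at most $\delta_{k,max}^f$ in magnitude. Since the gradient of the zero function has Lipschitz constant $0$, rerunning the error analysis of \cite[Thm.~5.4]{Conn2009a} with the interpolation residuals retained as an $O(\delta_{k,max}^f)$ term instead of being set to zero — precisely the modification already invoked in the proof of Theorem~\ref{thm:error_decrease_rate} — yields, for all $x_k+s\in B(x_k,\rho_k)$,
\[
\left\|\nabla\delta_k^f(x_k+s)\right\| \leq \kappa_n\left\|H_k^{\delta_k^f}\right\|\rho_k + \kappa_n'\,\frac{\delta_{k,max}^f}{\rho_k},
\]
where $\kappa_n$ is the geometry constant from \eqref{eq:pertubed_fully_linearity_coefficients} and $\kappa_n'>0$ depends only on $n$ and the poisedness of the sample set; the model-improvement steps of Algorithm~\ref{alg:algorithm1} keep both constants bounded, independently of $\rho_k$.

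It then remains to control the two terms. By \cite[Thm.~5.7]{Conn2009a}, applied exactly as in the proof of Theorem~\ref{thm:error_decrease_rate} with $\max_{x\in B(x_k,\rho_k)}|\delta_k^f(x)| \leq \delta_{k,max}^f$, we have $\|H_k^{\delta_k^f}\| \leq \bar{\kappa}_2^f\,\delta_{k,max}^f\rho_k^{-2}$, so the first term is at most $\kappa_n\bar{\kappa}_2^f\,\delta_{k,max}^f\rho_k^{-1}$, while the second term is $\kappa_n'\,\delta_{k,max}^f\rho_k^{-1}$. Since the hypothesis of Theorem~\ref{thm:error_decrease_rate} gives $\delta_{k,max}^f\in o(\rho_k^2)$, we have $\delta_{k,max}^f\rho_k^{-1}\in o(\rho_k)$, hence $\|\nabla\delta_k^f(x_k+s)\| \in o(\rho_k)$; the same computation with $\delta_{k,max}^c\in o(\rho_k^2)$ gives $\|\nabla\delta_k^c(x_k+s)\| \in o(\rho_k)$, which is the assertion. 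I expect the main obstacle to be the second step: one has to verify carefully that propagating the nonzero interpolation residuals through the arguments of \cite[Thm.~5.4]{Conn2009a} and \cite[Thm.~5.7]{Conn2009a} perturbs the gradient bound by exactly an $O(\delta_{k,max}^f/\rho_k)$ term and the model-Hessian bound by an $O(\delta_{k,max}^f/\rho_k^2)$ term, with constants that stay bounded thanks to the poisedness control built into Algorithm~\ref{alg:algorithm1}.
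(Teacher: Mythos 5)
Your proposal is correct and follows essentially the same route as the paper's proof: both reduce the claim to bounding $\|\nabla\delta_k^f\|$, reinterpret $\delta_k^f$ as a minimum-Frobenius-norm model of the constant zero function built from $o(\rho_k^2)$-inexact data, and propagate the nonzero interpolation residuals through the error analysis of \cite[Thm.~5.4]{Conn2009a} together with the Hessian bound $\|H_k^{\delta_k^f}\|\leq\bar{\kappa}_2^f\,\delta_{k,max}^f\rho_k^{-2}$. The paper merely makes the reduction to the perturbed interpolation system more explicit before invoking the cited result, while you make the resulting gradient bound and its constants more explicit; the substance is the same.
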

\begin{proof}
First we define $y := x_k+s$ and recall that $\delta_k^{f_n,k}(y) = f_{n,k}(y) - f(y)$ and thus
$\nabla \delta_k^{f_{n,k}}(y) = \nabla f_{n,k}(y) - \nabla f(y)$. Let us denote by $\{y_i\}$ the interpolation points on which we construct the minimum-Frobenius-norm model, 
\begin{equation}\label{eq:noise_completion_lemma_eq1}
m_k^{\delta}(y) = c + g^Ty + \frac12 y^T H y,
\end{equation}
of $\delta_k^{f_{n,k}}(y)$. Now we interpret the minimum-Frobenius-norm model as a quadratic approximation of the constant zero function $\mu(y) \equiv 0$ based on inexact evaluations $\delta_k^{f_{n,k}}(y_i) \in o(\rho_k^2)$, i.e.,
\begin{align}
m_k^{\delta}(y) &= \mu(y) + e^{\delta}(y) \quad \mbox{and} \label{eq:noise_completion_lemma_eq2}\\
\nabla m_k^{\delta}(y) &= g + Hy = \nabla \mu(y) + e^g(y).\label{eq:noise_completion_lemma_eq3}
\end{align}
Here, $e^{\delta}(y)$ and $e^{g}(y)$ are the approximation errors that result from the minimum-Frobenius-norm approximation as well as the inexact evaluations. Evaluating (\ref{eq:noise_completion_lemma_eq1}) at all interpolation points $y_i$ and subtracting (\ref{eq:noise_completion_lemma_eq2}) yields
\begin{align*}
\delta_k^{f_{n,k}}(y_i) - \mu(y) - e^{\delta}(y) &= m_k^{\delta}(y_i) - m_k^{\delta}(y)\\
 &= g^T(y_i - y) + \frac12 (y_i - y)^T H (y_i - y) + (y_i - y)^T H y \\
 & = \left(g + Hy\right)^T(y_i - y) + \frac12 (y_i - y)^T H (y_i - y)\\
& =\nabla \mu(y)^T(y_i - y) + (e^g)^T(y_i - y) + \frac12 (y_i - y)^T H (y_i - y),
\end{align*}
where we used (\ref{eq:noise_completion_lemma_eq3}) in the last equation. Now note that $\mu(y) = 0$ and $\nabla \mu(y) = 0$; thus
\[
(e^g)^T(y_i - y) + \frac12 (y_i - y)^T H (y_i - y) = \delta_k^{f_{n,k}}(y_i) - e^{\delta}(y).
\]
Subtracting the above equation for $i = 0$ from all the other equations for $i > 0$ yields
\[
(e^g)^T(y_i - y_0) + \frac12 (y_i - y_0)^T H (y_i - y_0) = \delta_k^{f_{n,k}}(y_i) - \delta_k^{f_{n,k}}(y_0) = o(\rho_k^2),
\]
which is exactly the type of equation discussed in the proof \cite{Conn2009a}[Lemma 5.4]. The assertion of this lemma is therefore a straightforward consequence of \cite{Conn2009a}[Lemma 5.4]. Replacing the objective function $f$ with the constraints $c$, the same proof yields the second statement in the assertion of this lemma.
\end{proof}

\end{appendix}

\bibliography{Literatur.bib} 
\end{document}